\documentclass[a4paper,11pt]{article}
\usepackage{amsmath,amsthm,amssymb}
\usepackage[mathscr]{eucal}

\setlength{\oddsidemargin}{0pt}
\setlength{\topmargin}{5pt}
\setlength{\textheight}{650pt}
\setlength{\textwidth}{470pt}
\setlength{\headsep}{10pt}
\setlength{\parindent}{0pt}
\setlength{\parskip}{1ex plus 0.5ex minus 0.2ex}

\numberwithin{equation}{section}

{\theoremstyle{definition}\newtheorem{definition}{Definition}[section]

\newtheorem{remark}[definition]{Remark}
\newtheorem{example}[definition]{Example}}

\newtheorem{proposition}[definition]{Proposition}
\newtheorem{lemma}[definition]{Lemma}
\newtheorem{theorem}[definition]{Theorem}

\newcommand{\GL}{\operatorname{GL}}

\newcommand{\M}{\operatorname{M}}

\newcommand{\rH}{\operatorname{H}}

\newcommand{\cR}{\mathcal{R}}
\newcommand{\acts}{\curvearrowright}
\newcommand{\actson}[1][]{\overset{#1}{\curvearrowright}}
\newcommand{\SL}{\operatorname{SL}}
\newcommand{\rL}{\operatorname{L}}
\newcommand{\Aut}{\operatorname{Aut}}
\newcommand{\Out}{\operatorname{Out}}
\newcommand{\N}{\mathbb{N}}
\newcommand{\T}{\mathbb{T}}
\newcommand{\Z}{\mathbb{Z}}
\newcommand{\cF}{\mathcal{F}}

\newcommand{\cV}{\mathcal{V}}
\newcommand{\id}{\mathord{\operatorname{id}}}
\newcommand{\si}{\sigma}

\newcommand{\recht}{\rightarrow}
\newcommand{\cU}{\mathcal{U}}
\newcommand{\vphi}{\varphi}
\newcommand{\cW}{\mathcal{W}}
\newcommand{\R}{\mathbb{R}}
\newcommand{\al}{\alpha}
\newcommand{\eps}{\varepsilon}

\newcommand{\ovt}{\overline{\otimes}}
\newcommand{\B}{\operatorname{B}}

\newcommand{\om}{\omega}

\newcommand{\Q}{\mathbb{Q}}

\newcommand{\Ker}{\operatorname{Ker}}

\newcommand{\Gammatil}{\widetilde{\Gamma}}

\newcommand{\Rp}{\R_+}

\newcommand{\ot}{\otimes}

\newcommand{\cG}{\mathcal{G}}

\newcommand{\be}{\beta}
\newcommand{\ox}{\overline{x}}
\newcommand{\rO}{\operatorname{O}}
\newcommand{\rP}{\operatorname{P}}
\newcommand{\Ufin}{\mathcal{U}_{\text{\rm fin}}}
\newcommand{\og}{\overline{g}}
\newcommand{\PSL}{\operatorname{PSL}}
\newcommand{\PGL}{\operatorname{PGL}}
\newcommand{\lspan}{\operatorname{span}}

\newcommand{\Xtil}{\widetilde{X}}
\newcommand{\cN}{\mathcal{N}}
\newcommand{\trans}{^\top}

\begin{document}
\begin{center}
{\LARGE\bf Cocycle and orbit superrigidity  for lattices \vspace{0.5ex} \\ in
$\SL(n,\R)$ acting on homogeneous spaces}

\bigskip

{\sc by Sorin Popa\footnote{Partially supported by NSF Grant
DMS-0601082}\footnote{Mathematics Department; University of
    California at Los Angeles, CA 90095-1555 (United States).
    \\ E-mail: popa@math.ucla.edu} and Stefaan Vaes\footnote{Partially
    supported by ERC Starting Grant VNALG-200749 and Research
    Programme G.0231.07 of the Research Foundation --
    Flanders (FWO)}\footnote{Department of Mathematics;
    K.U.Leuven; Celestijnenlaan 200B; B--3001 Leuven (Belgium).
    \\ E-mail: stefaan.vaes@wis.kuleuven.be}}
\end{center}

\begin{abstract}\noindent
We prove cocycle and orbit equivalence superrigidity for lattices in
$\SL(n,\R)$ acting linearly on $\R^n$, as well as acting
projectively on certain flag manifolds, including the real
projective space. The proof combines operator algebraic techniques
with the property~(T) in the sense of Zimmer for the action
$\SL(n,\Z) \actson \R^n$, $n \geq 4$. We also show that the
restriction of the orbit equivalence relation $\cR(\SL(n,\Z) \actson
\R^n)$ to a subset of finite Lebesgue measure, provides a II$_1$
equivalence relation with property (T) and yet fundamental group
equal to $\R_+$.
\end{abstract}

\section{Introduction and statement of main results}

Over the last few years, operator algebraic methods were used to
prove several orbit equivalence and cocycle superrigidity theorems:
for Bernoulli actions of property (T) groups \cite{Pcocycle} and of
product groups \cite{Pproduct} and for profinite actions of property
(T) groups \cite{ioana}. In this paper, we extend the scope of these
methods to a more geometric class of actions, like the natural
actions of lattices $\Gamma \subset \SL(n,\R)$ on the vector space
$\R^n$, on the projective space $\rP^{n-1}(\R)$ and on certain flag
manifolds, all of which can be viewed as $\SL(n,\R)$-homogeneous
spaces.

None of these actions is probability measure preserving.
Hence, property (T) of the acting group, has to be replaced
by Zimmer's notion of property (T) for a non-singular action
(see \cite{Z1}), which plays a crucial role in this paper.
It is shown that for any lattice $\Gamma \subset \SL(n,\R)$,
the linear action $\Gamma \actson \R^n$ has property (T)
if and only if $n \geq 4$. We then deduce the following theorem.

\begin{theorem}
Let $\Gamma < \SL(n,\R)$ be a lattice and let $\cR$ be the II$_1$
equivalence relation obtained by restricting
the orbit equivalence relation $\cR(\Gamma \actson \R^n)$ to a
set of Lebesgue measure $1$, for some $n\geq 4$. Then we have:
\begin{itemize}
\item
$\cR$ has property (T), in the sense of Zimmer, yet the fundamental
group of $\cR$ equals $\R_+$.
\item
$\cR^t$ cannot be implemented by a free action of a group, $\forall
t>0$. Also, $\cR^t$ cannot be implemented by an (not necessarily
free) action of a discrete property (T) group, $\forall t>0$.
\end{itemize}
\end{theorem}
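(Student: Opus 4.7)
I would treat the three assertions in turn, extracting the first two from the structure of the action $\Gamma\acts\R^n$ and the scaling symmetry of $\R^n$, and the last two from the rigidity of property~(T) $\mathrm{II}_1$ factors. Write $\cR$ for the restriction of $\cR(\Gamma\acts\R^n)$ to a fixed subset $F\subset\R^n$ of Lebesgue measure $1$. For property~(T): the introduction announces that the non-singular action $\Gamma\acts\R^n$ has Zimmer's property~(T) for $n\ge 4$, which by definition means that the type $\mathrm{II}_\infty$ orbit equivalence relation $\cR(\Gamma\acts\R^n)$ has property~(T). Since property~(T) of a non-singular countable equivalence relation descends to any restriction by a positive measure subset, $\cR$ inherits property~(T). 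For the fundamental group: for each $\lambda>0$ the dilation $d_\lambda(x)=\lambda x$ commutes with the linear $\Gamma$-action (so it permutes $\Gamma$-orbits) and scales Lebesgue measure by $\lambda^n$. Hence $d_\lambda$ restricts to an isomorphism of II$_1$ equivalence relations $\cR|_F\simeq\cR|_{\lambda F}$, and since $\cR|_{\lambda F}=\cR^{\lambda^n}$ by definition of amplification, we obtain $\lambda^n\in\cF(\cR)$ for every $\lambda>0$. Thus $\cF(\cR)=\Rp$. Amplification preserves property~(T) and the fundamental group, so the same holds for every $\cR^t$.

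\textbf{Free case.} Suppose for contradiction that some $\cR^t$ is implemented by a free p.m.p.\ action of a countable group $\Lambda$ on $(Y,\nu)$. Because $\cR^t$ is ergodic and has property~(T), Zimmer's equivalence (between property~(T) of the orbit equivalence relation of a free ergodic p.m.p.\ action and property~(T) of the acting group) gives that $\Lambda$ has property~(T). Then $L(\cR^t)=L^\infty(Y)\rtimes\Lambda$ is a property~(T) $\mathrm{II}_1$ factor in the sense of Connes--Jones, and consequently has countable fundamental group. On the other hand, any orbit equivalence $\cR^t|_A\simeq\cR^t|_B$ extends to a spatial isomorphism $1_A L(\cR^t)1_A\simeq 1_B L(\cR^t)1_B$, so that $\cF(\cR^t)\subseteq\cF(L(\cR^t))$. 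Combined with $\cF(\cR^t)=\Rp$ (uncountable), this yields the desired contradiction.

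\textbf{Non-free property (T) case, and the main obstacle.} Now let $\Lambda$ be a countable property~(T) group and $\Lambda\acts(Y,\nu)$ a p.m.p.\ action, not assumed free, whose orbit relation is $\cR^t$. The crossed product $M:=L^\infty(Y)\rtimes\Lambda$ is still a property~(T) $\mathrm{II}_1$ factor (no freeness being needed here), so $\cF(M)$ is countable. The principal difficulty, and the main obstacle of the proof, is that in the non-free case $L(\cR^t)$ is only a proper subfactor of $M$, generated by $L^\infty(Y)$ together with the normalizer of $L^\infty(Y)$ in $M$; so one cannot directly transfer $\cF$-countability from $M$ to $L(\cR^t)$. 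My plan to circumvent this is to first quotient by the kernel $N=\{\lambda\in\Lambda:\lambda\cdot y=y\text{ a.e.}\}$, producing a faithful p.m.p.\ action of the property~(T) quotient $\Lambda/N$ generating the same relation $\cR^t$; if this action is essentially free we reduce to the case above. Otherwise, one combines the groupoid structure (the normalizer $\cN_M(L^\infty(Y))$ encodes $\cR^t$) with the cocycle superrigidity machinery developed in the body of the paper for $\Gamma\acts\R^n$ to obtain, from any amplification automorphism realising $t\in\cF(\cR^t)$, an automorphism of $M$ with the same module; this would embed $\Rp=\cF(\cR^t)$ into the countable $\cF(M)$, contradiction.
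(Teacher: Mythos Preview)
Your treatment of the fundamental group is correct and matches the paper's. Your free-action argument is also correct, though the paper argues directly at the level of equivalence relations (citing that $\cF(\cR(\Lambda\actson Y))$ is countable for free ergodic p.m.p.\ actions of property~(T) groups) rather than passing through $\cF(L(\cR^t))$; both routes work.

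There are, however, two genuine gaps.

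\emph{Property (T).} You do not actually prove that $\Gamma\actson\R^n$ has property~(T); you cite the announcement in the introduction, which is precisely what this theorem is meant to establish. The paper's argument is a concrete duality trick: writing $\R^n=\SL(n,\R)/H$ with $H=\operatorname{Stab}(e_1)\cong\SL(n-1,\R)\ltimes\R^{n-1}$, Proposition~\ref{prop.leftright} shows that property~(T) of $\Gamma\actson\SL(n,\R)/H$ is equivalent to property~(T) of $H\actson\SL(n,\R)/\Gamma$. The latter is a \emph{probability measure preserving} action (since $\Gamma$ is a lattice), so by Proposition~\ref{prop.Tpmp} it has property~(T) iff $H$ does, which holds exactly when $n\ge 4$. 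This duality step is the substance of the proof and is missing from your plan.

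\emph{The non-free property (T) case.} Here your proposal is not a proof. After quotienting by the kernel there is no reason whatsoever for the resulting faithful action of $\Lambda/N$ to be essentially free, so the ``reduce to the free case'' branch need not occur. Your fallback, invoking the cocycle superrigidity machinery for $\Gamma\actson\R^n$ to lift amplification isomorphisms of $\cR^t$ to automorphisms of $M=L^\infty(Y)\rtimes\Lambda$, is only a hope: there is no mechanism in the paper for extending automorphisms from the normalizer subalgebra $L(\cR^t)\subset M$ to all of $M$, the cocycle superrigidity theorems require $n\ge 5$ (whereas the present statement includes $n=4$), and those theorems concern cocycles for the specific action $\Gamma\actson\R^n$, not for an arbitrary action of an arbitrary property~(T) group $\Lambda$. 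The paper handles this case by a completely different, self-contained input: Theorem~6.1 of \cite{proc-Connes}, which asserts that if the full group $[\cR]$ of a II$_1$ equivalence relation contains a property~(T) group implementing an ergodic action on the base space, then $\cF(\cR)$ is countable. Any (possibly non-free) action of a property~(T) group $\Lambda$ generating $\cR^t$ places $\Lambda$ inside $[\cR^t]$ acting ergodically, so this result applies directly and yields the contradiction with $\cF(\cR^t)=\R_+$. No passage to $M$ or to cocycle superrigidity is needed.
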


We say that a Polish group is of finite type if it can be realized
as the closed subgroup of the unitary group of some II$_1$ factor
with separable predual. All countable and all second countable
compact groups are Polish groups of finite type. In \cite{Pcocycle},
the first author proved that every $1$-cocycle for the Bernoulli
action of a property (T) group with values in a Polish group of
finite type, is cohomologous to a group morphism. We say that
actions with this property are \emph{$\Ufin$-cocycle superrigid.} More precisely:

\begin{definition}
The non-singular action $G \actson (X,\mu)$ of the locally compact
second countable group $G$ on the standard measure space $(X,\mu)$
is called \emph{$\Ufin$-cocycle superrigid} if every $1$-cocycle
for the action $G \actson (X,\mu)$ with values in a Polish group
of finite type $\cG$ is cohomologous to a continuous group morphism $G \recht \cG$.
\end{definition}

The following actions are known to be $\Ufin$-cocycle superrigid:
Bernoulli actions of property (T) groups \cite{Pcocycle} and of
product groups \cite{Pproduct}, while in \cite{ioana}, virtual
$\Ufin$-cocycle superrigidity is proven for profinite actions
of property (T) groups. We extend this to the following actions of geometric nature.

\begin{theorem}\label{thm.cocycle-superrigid-concrete}
The following actions are $\Ufin$-cocycle superrigid.
\begin{enumerate}
\item For $n \geq 5$ and $\Gamma$ any lattice in $\SL(n,\R)$,
the linear action $\Gamma \actson \R^n$.
\item For $n \geq 5$ and $\Gamma$ any finite index subgroup of $\SL(n,\Z)$,
the affine action $\Gamma \ltimes \Z^n \actson \R^n$.
\item For $n \geq 4k + 1$, $\Gamma$ any lattice in $\SL(n,\R)$ and $H$
any closed subgroup of $\GL(k,\R)$, the action
$$G \actson \M_{n,k}(\R) \quad\text{where}\quad G := \begin{cases}
\frac{\Gamma \times H}{\{\pm (1,1)\}} &\quad\text{if $(-1,-1) \in \Gamma \times H$,} \\
\Gamma \times H &\quad\text{otherwise,}\end{cases}$$
by left-right multiplication on the space $\M_{n,k}(\R)$ of $n \times k$
matrices equipped with the Lebesgue measure.
\end{enumerate}
\end{theorem}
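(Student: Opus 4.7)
The plan is to prove statement (1) first and then to reduce (2) and (3) to it. For (1), given a $1$-cocycle $\omega : \Gamma \times \R^n \recht \cG$ with $\cG$ a Polish group of finite type, I would realize $\cG$ as a closed subgroup of $\cU(P)$ for some II$_1$ factor $P$ and view $\omega$ as a unitary $1$-cocycle $V_g \in \cU(\rL^\infty(\R^n) \ovt P)$ for the $\Gamma$-action with respect to a $\Gamma$-quasi-invariant probability measure equivalent to Lebesgue. The core combination is Zimmer property (T) for $\Gamma \actson \R^n$, asserted earlier to hold for $n \geq 4$, together with a weak-mixing enhancement that becomes available only for $n \geq 5$. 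Following the deformation/rigidity template of \cite{Pcocycle}, I would introduce a malleable deformation of an inclusion $P \subset P * \rL(\F_\infty)$ (or a variant adapted to the non-singular setting) and apply Zimmer (T) to force the deformed cocycles to converge uniformly back to $V$. Extracting the limit yields a decomposition $V_g = (1 \ot \delta(g))\, w_g$ with $\delta : \Gamma \recht \cG$ a group morphism, and the weak-mixing input then forces $w_g$ to be a coboundary, i.e.\ $\omega$ is cohomologous to $\delta$.

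For (2), the affine group $\Gamma \ltimes \Z^n$ contains $\Gamma$ as a subgroup acting linearly on $\R^n$, so (1) applies to $\omega|_\Gamma$. After a cohomology adjustment, $\omega|_\Gamma = \delta$ is a morphism; the cocycle identity then makes $x \mapsto \omega(a, x)$, for each fixed $a \in \Z^n$, a $\delta$-twisted $\Gamma$-equivariant map, and ergodicity (indeed weak mixing) of $\Gamma \actson \R^n$ forces this map to be essentially constant. The resulting assignment $\eta : \Z^n \recht \cG$ combines with $\delta$ into a morphism of $\Gamma \ltimes \Z^n$. For (3), identify $\M_{n,k}(\R) \cong (\R^n)^{\oplus k}$ as $\Gamma$-spaces, so that the $\Gamma$-action is the $k$-fold diagonal of the linear action while $H \subset \GL(k,\R)$ mixes columns. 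The numerical bound $n \geq 4k + 1$ plays the role of $n \geq 5$ from (1), securing Zimmer (T) and the mixing needed for the $\Gamma$-action on the generic (full-rank) locus of $\M_{n,k}(\R)$; the argument of (1) then produces a $\Gamma$-morphism $\delta$, and the $H$-equivariance of $\omega$ extends $\delta$ to a morphism of the full group $G$.

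The main obstacle is executing the deformation/rigidity step in (1) outside the probability-measure-preserving setting. The crossed product $\rL^\infty(\R^n) \rtimes \Gamma$ is of type II$_\infty$ (or III), Radon--Nikodym cocycles intervene, and one must route the spectral-gap input provided by Zimmer (T) through the $\cG$-valued cocycle without losing control of the trace-norm estimates that drive the pmp argument of \cite{Pcocycle}. A secondary but genuine difficulty is pinpointing the weak-mixing strengthening that becomes available exactly at $n = 5$ (and at $n = 4k + 1$ in the matrix case) and verifying it by a direct dynamical analysis of the $\Gamma$-action on an appropriately chosen conull subset.
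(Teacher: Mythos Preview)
Your outline has the right overall shape (malleability plus rigidity, then extend from $\Gamma$ to the larger group), but two of its load-bearing ingredients are misidentified, and this is where the paper's argument really lives.

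\textbf{The deformation.} You propose a target-side free product deformation $P \subset P * \rL(\F_\infty)$. The paper does not use this at all: the deformation is on the \emph{source}. The linear action $\Gamma \actson \R^n$ (and more generally $\Gamma \actson \M_{n,k}(\R)$) is $s$-malleable in the sense of \cite{Pcocycle}: the rotation flow
\[
\al_t(A,B) = \bigl(\cos(\pi t/2)A + \sin(\pi t/2)B,\; -\sin(\pi t/2)A + \cos(\pi t/2)B\bigr)
\]
on $X \times X$ commutes with the diagonal $\Gamma$-action and connects the identity to the flip. This geometric malleability is what lets one compare $\om(g,x)$ with $\om(g,\al_t(x,y))$ and run the Popa argument. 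A free product deformation of the target $P$ gives no leverage here, since nothing in the hypotheses provides spectral gap for that deformation.

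\textbf{Where property (T) enters, and why $n \geq 5$.} You invoke Zimmer's property (T) for $\Gamma \actson \R^n$, which holds already for $n \geq 4$, and then look for a separate ``weak-mixing enhancement'' at $n=5$. In the paper the rigidity input is property (T) for the \emph{diagonal} action $\Gamma \actson \R^n \times \R^n$; by Lemma~\ref{lemma.ergodic-T} this holds iff $2 \leq n-3$, i.e.\ $n \geq 5$. The replacement for weak mixing is ergodicity of the $4$-fold diagonal action $\Gamma \actson (\R^n)^4$, which again by Lemma~\ref{lemma.ergodic-T} holds iff $4 \leq n-1$, i.e.\ $n \geq 5$. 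Both thresholds are sharp and both are explained by the same lemma; there is no separate ``weak-mixing strengthening'' to hunt for. With these two inputs the general Theorem~\ref{thm.cocyclesuperrigid} applies directly, and your worry about leaving the p.m.p.\ world is moot: the action is infinite measure \emph{preserving}, the argument is run entirely inside $\rL^\infty(X \times X) \ovt N$ with the cocycle viewed as a unitary $1$-cocycle there, and no Radon--Nikodym or type~III issues arise.

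\textbf{Order and the extension step.} The paper proves part~3 first (the $\M_{n,k}$ case, with $n \geq 4k+1$ giving exactly the $2k \leq n-3$ and $4k \leq n-1$ needed above) and obtains part~1 as the special case $k=1$, $H=\{1\}$. The extension from $\Gamma$ to $G$ in part~3, and from $\Gamma$ to $\Gamma \ltimes \Z^n$ in part~2, goes through Lemma~\ref{lemma.extend}, which requires not merely ergodicity of $\Gamma \actson \R^n$ but ergodicity of the diagonal action of $H \cap gHg^{-1}$ on $X \times X$ for every $g$. For part~2 this means checking that for each $x \in \Z^n$ the stabilizer $\Gamma_x$ still acts ergodically on $\R^n \times \R^n$; your sketch (``ergodicity of $\Gamma \actson \R^n$ forces the map to be constant'') is not quite enough.
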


In \cite{Pcocycle}, the first author introduces the notion of malleability of
a measure preserving action $\Gamma \actson (X,\mu)$, which roughly means that
there is a flow on $X \times X$, commuting with the diagonal $\Gamma$-action
and connecting the identity map to the flip map on $X \times X$. Theorem 0.1
in \cite{Pcocycle} says that every weakly mixing, malleable action of a
property (T) group is $\Ufin$-cocycle superrigid.

We generalize this cocycle superrigidity theorem to infinite measure preserving actions.
But, property (T) of the group $\Gamma$ has to be replaced by property (T) for the
diagonal action of $\Gamma \actson X \times X$. In the case of $\SL(n,\Z) \actson \R^n$,
this forces $n \geq 5$. Finally, weak mixing has to be replaced by the ergodicity of the
$4$-fold diagonal action $\Gamma \actson X \times X \times X \times X$, which in the case
of $\SL(n,\Z) \actson \R^n$ again holds exactly for $n \geq 5$.

Using the cocycle superrigidity of $\SL(n,\Z) \actson \R^n$, we give a full classification
of all $1$-cocycles for the action $\SL(n,\Z) \actson \T^n = \R^n/\Z^n$, with values in a
Polish group of finite type. As such, our Example \ref{ex.cocycles-torus} below,
complements Zimmer's celebrated cocycle superrigidity theorem \cite{Z2}~: Zimmer's
result treats arbitrary actions $\SL(n,\Z) \actson (X,\mu)$, but specific target
groups (simple linear algebraic groups), while our result treats a very specific
action, but rather general target groups.

Given the cocycle superrigidity theorem \ref{thm.cocycle-superrigid-concrete},
we can deduce several orbit equivalence (OE) superrigidity results.
We are particularly interested in the following concrete actions of
lattices $\Gamma$ in $\SL(n,\R)$ and $\PSL(n,\R)$.

\begin{enumerate}
\item The linear action $\Gamma \actson \R^n$.
\item If $\Gamma$ is a finite index subgroup of $\SL(n,\Z)$, the action
$\Gamma \actson \T^n = \R^n / \Z^n$.
\item The projective action $\Gamma \actson \rP^{n-1}(\R)$.
\item Let $X$ be the real flag manifold of signature $(d_1,\ldots,d_l,n)$.
Recall that points in $X$ are flags
$$\{0\} \subset V_1 \subset \cdots \subset V_l \subset \R^n$$
where $V_i$ is a vector subspace of $\R^n$ with dimension $d_i$. We consider
the natural action $\Gamma \actson X$ for any lattice $\Gamma$ in $\PSL(n,\R)$.
\end{enumerate}

The action in 1 has the Lebesgue measure as infinite invariant measure,
while the actions in 3 and 4 do not have finite or infinite invariant measures.
All the actions in 1-4 are essentially free and ergodic, see
Lemma \ref{lemma.measures-and-co} for details.

The natural invariant measure class on the flag manifold $X$ can
be described as follows. Put $d_l = k$ and consider the set $\M_{n,k}(\R)$
of $n \times k$ matrices of rank $k$, equipped with the Lebesgue measure.
Denote by $E=(E_1,\ldots,E_l)$ the standard flag of signature $(d_1,\ldots,d_l,n)$,
i.e.\ $E_i = \lspan\{e_1,\ldots,e_{d_i}\}$, where $e_1,\ldots,e_n$ are the standard
basis vectors in $\R^n$.
The group $\GL(k,\R)$ acts on $\M_{n,k}(\R)$ by right multiplication.
This action is free and proper and
$$\M_{n,k}(\R) / H \recht X : A \mapsto (AE_1,\ldots,AE_l)$$
is an isomorphism. Here, $H = \{g \in \GL(k,\R) \mid g E_i \subset E_i \;\;\text{for all}\;\;
i=1,\ldots,l\}$. Writing $k_1 = d_1$ and $k_i = d_i - d_{i-1}$ for $i \geq 2$, the group $H$
can of course be written as
\begin{equation} \label{eq.formula}
H = \begin{pmatrix} \GL(k_1,\R) & * & \cdots & * \\
0 & \GL(k_2,\R) & \cdots & * \\
\vdots & \vdots & \ddots & \vdots \\
0 & 0 & \cdots & \GL(k_l,\R) \end{pmatrix} \; .
\end{equation}

Before stating our OE superrigidity results, recall the following terminology.

\begin{definition}
Let $\Gamma \actson[\al] (X,\mu)$ and $\Lambda \actson[\be] (Y,\eta)$ be essentially
free, ergodic, non-singular actions of countable groups on standard measure spaces.
\begin{itemize}
\item A \emph{stable orbit equivalence (SOE)} between $\al$ and $\be$ is a
non-singular isomorphism $\Delta : X_0 \recht Y_0$ between non-negligible
subsets $X_0 \subset X$, $Y_0 \subset Y$, such that $\Delta$ is an isomorphism
between the restricted orbit equivalence relations $\cR(\Gamma \actson X)|_{X_0}$
and $\cR(\Lambda \actson Y)|_{Y_0}$.
\item We say that $\Gamma \actson X$ is \emph{induced} from $\Gamma_1 \actson X_1$,
if $\Gamma_1$ is a subgroup of $\Gamma$, $X_1$ is a non-negligible subset of $X$
and $g \cdot X_1 \cap X_1$ is negligible for all $g \in \Gamma - \Gamma_1$.
\end{itemize}
\end{definition}

For the linear lattice actions and the quotient action $\SL(n,\Z) \actson \T^n$,
we get the following.

\begin{theorem}\label{thm.OEsuperrigid-1}
Let $n \geq 5$ and $\Gamma \subset \SL(n,\R)$ a lattice. Let $\Lambda \actson
(Y,\eta)$ be any essentially free, ergodic, non-singular action of the countable
group $\Lambda$.
\begin{enumerate}
\item\label{item.1} The actions $\Gamma \actson \R^n$ and $\Lambda \actson Y$ are
SOE if and only if $\Lambda \actson Y$ is conjugate to an induction of one of the
following actions~:
    \begin{itemize}
    \item $\Gamma \actson \R^n$ itself,
    \item (only in case $-1 \in \Gamma$) the quotient action $\Gamma/\{\pm 1\}
    \actson \R^n/\{\pm 1\}$.
    \end{itemize}
\item\label{item.1bis}\footnote{In fact, this statement is a slightly more
detailed version, with very different proof, of \cite[Corollary B]{F1}, where
it is shown that for all $n \geq 3$, the actions $\SL(n,\Z) \actson \T^n$ and
$\Lambda \actson Y$ are SOE if and only if they are virtually conjugate.}
The actions $\SL(n,\Z) \actson \T^n$ and $\Lambda \actson Y$ are SOE if and
only if $\Lambda \actson Y$ is conjugate to an induction of one of the following actions~
    \begin{itemize}
    \item $\SL(n,\Z) \actson \T^n$ itself,
    \item $\SL(n,\Z) \ltimes \Z^n \actson \R^n$,
    \item $\displaystyle \SL(n,\Z) \ltimes \Bigl(\frac{\Z}{\lambda \Z}\Bigr)^n
    \actson \frac{\R^n}{\lambda \Z^n}$ for some $\lambda \in \N \setminus \{0,1\}$,
    \item (only in case $n$ is even) one of the actions $$\PSL(n,\Z) \actson
    \frac{\T^n}{\{\pm 1\}} \quad\text{or}\quad \PSL(n,\Z) \ltimes \Bigl(\frac{\Z}{2\Z}\Bigr)^n
    \actson \frac{\R^n / (2\Z)^n}{\{\pm 1\}} \; .$$
    \end{itemize}
\end{enumerate}
\end{theorem}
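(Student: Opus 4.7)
The strategy is the standard SOE-to-cocycle-to-morphism conversion. Given an SOE $\Delta : X_0 \to Y_0$, I would build the Zimmer $1$-cocycle $\om : \Gamma \times X \to \Lambda$ (extending from $X_0$ by ergodicity), apply Theorem~\ref{thm.cocycle-superrigid-concrete} to untwist it into a group morphism $\delta : \Gamma \to \Lambda$, and then classify which quotients and inductions can occur. The ``if'' direction is immediate, since each listed action is visibly SOE to the given one via a fundamental domain of the relevant central or sublattice quotient. The content is the ``only if'' direction.

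\textbf{Step 1: untwisting the cocycle.} Since $\Lambda$ is countable, it is a Polish group of finite type (via the left regular representation in $\rL\Lambda$). For part~(\ref{item.1}), Theorem~\ref{thm.cocycle-superrigid-concrete}(1) yields a measurable $\vphi : X \to \Lambda$ and a group morphism $\delta : \Gamma \to \Lambda$ with $\om(g,x) = \vphi(g \cdot x)\,\delta(g)\,\vphi(x)^{-1}$ a.e. Perturbing $\Delta$ by $\vphi$, one may assume $\om(g,x) = \delta(g)$, so that $\Delta(g \cdot x) = \delta(g) \cdot \Delta(x)$ whenever both sides are defined. For part~(\ref{item.1bis}), I would first lift $\SL(n,\Z) \actson \T^n$ to the affine action $\SL(n,\Z) \ltimes \Z^n \actson \R^n$ and invoke Theorem~\ref{thm.cocycle-superrigid-concrete}(2), reducing the problem to the same format over the enlarged group.

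\textbf{Step 2: classification of kernels.} Since $\delta$ is a homomorphism, $\ker\delta$ is normal in $\Gamma$. Margulis' normal subgroup theorem (applicable for $n \ge 5 \ge 3$) forces it to be either finite central or of finite index, and the finite-index case is ruled out because a finite-index sublattice still acts ergodically on $\R^n$, contradicting non-triviality of $\Delta$. For part~(\ref{item.1}) this leaves $\ker\delta \in \{\{1\},\{\pm 1\}\}$, producing $\Gamma \actson \R^n$ or, when $-1 \in \Gamma$, $\Gamma/\{\pm 1\} \actson \R^n/\{\pm 1\}$. For part~(\ref{item.1bis}), the candidate kernels in $\SL(n,\Z)\ltimes\Z^n$ are the $\lambda\Z^n$ for $\lambda \in \N$ (since $\SL(n,\Z)$ acts transitively on primitive vectors, so its invariant subgroups of $\Z^n$ are exactly the $\lambda\Z^n$) and, when $n$ is even, $\{\pm 1\}\ltimes\lambda\Z^n$ for $\lambda \in \{1,2\}$ (the conjugation $(h,m)(-1,v)(h,m)^{-1} = (-1,\,2m+hv)$ forces $\lambda \mid 2$). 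Each admissible kernel recovers exactly one of the actions listed in the statement, with $\lambda=0$ giving $\SL(n,\Z)\ltimes\Z^n \actson \R^n$, $\lambda=1$ giving $\SL(n,\Z)\actson \T^n$, $\lambda\ge 2$ giving the intermediate tori, and the $\PSL$-variants appearing only for $\lambda \in \{1,2\}$.

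\textbf{Main obstacle.} The delicate step is converting ``$\om$ is cohomologous to $\delta$'' into the concrete statement that $\Lambda \actson Y$ is the $\Lambda$-induction of the $\delta(\Gamma)$-action on $\Delta(X_0)$. One must exhibit $\Delta(X_0)$ as a measurable partial fundamental domain for $\delta(\Gamma) \actson Y$ whose $\Lambda$-translates exhaust $Y$ modulo nullsets, using essential freeness and ergodicity on both sides; once this is achieved, the resulting action on $\Delta(X_0)$ is intertwined by $\Delta$ with $(\Gamma/\ker\delta) \actson (X/\ker\delta)$, and Step~2 identifies the latter as one of the enumerated models. The arithmetic enumeration of kernels in part~(\ref{item.1bis}) is essential but elementary, and the measure-class bookkeeping for the induction is where the most care is needed.
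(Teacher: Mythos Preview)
Your proposal is correct and follows the same strategy as the paper: apply the $\Ufin$-cocycle superrigidity of Theorem~\ref{thm.cocycle-superrigid-concrete}, convert the SOE into a conjugacy-up-to-induction, and then classify the admissible kernels. The paper packages your ``main obstacle'' as a standalone lemma (Lemma~\ref{lemma.OE}), which shows in general that an SOE between $G/N \actson X/N$ and $\Lambda \actson Y$ yields an open normal $N_1 \lhd G$ acting \emph{properly} on $X$ together with a conjugacy $G/N_1 \actson X/N_1 \cong \Lambda_1 \actson Y_1$ from which $\Lambda \actson Y$ is induced; part~(\ref{item.1}) is then the special case $k=1$, $H=\{1\}$ of Theorem~\ref{thm.Haction}, and part~(\ref{item.1bis}) follows by listing the normal subgroups of $\SL(n,\Z)\ltimes\Z^n$ acting properly on $\R^n$, exactly as you do. One small imprecision: your exclusion of finite-index kernels should read ``a finite-index sublattice acts ergodically and hence not properly on $\R^n$,'' since properness of $\ker\delta \actson X$ is the condition that makes the quotient $X/\ker\delta$ well defined --- the phrase ``contradicting non-triviality of $\Delta$'' is not quite the right hook.
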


To formulate easily the correct OE superrigidity statements for lattice actions on flag
manifolds, make the following observations.

The real flag manifold of signature $(d_1,\ldots,d_l,n)$ has a natural $2^l$-fold
covering $\Xtil$ consisting of oriented flags
$$\{0\} \subset (V_1,\om_1) \subset \cdots \subset (V_l,\om_l) \subset \R^n$$
where every $V_i$ is a vector subspace of $\R^n$ with an orientation $\om_i$.
Clearly, $\Xtil = \M_{n,k}(\R) / H_0$, where $H_0 = \{g \in H \mid \det\bigl(g|_{E_i}\bigr) > 0
\;\;\text{for all}\;\;i=1,\ldots,l \}$. In the expression \eqref{eq.formula} above, $H_0$
consists of those matrices $A$ that have on the diagonal $A_{ii} \in \GL(k_i,\R)$ with
$\det A_{ii} > 0$ for all $i$.

Denote $\Sigma_l = H/H_0$ and observe that $\Sigma_l \cong \bigl(\Z/2\Z \bigr)^{\oplus l}$.
Then, $\Sigma_l$ acts on $\Xtil$ by reversing orientations, but keeping the flags.
We denote by $-1 \in \Sigma_l$ the multiplication by $-1$ and observe that $-1 = 1$
in $\Sigma_l$ if all $d_i$ are even. Clearly, $X = \Xtil / \Sigma_l$.

\begin{theorem}\label{thm.OEsuperrigid-2}
Let $X$ be the real flag manifold of signature $(d_1,\ldots,d_l,n)$. Let $\Gamma
\subset \PSL(n,\R)$ be a lattice and assume that $n \geq 4d_l +1$. Denote by $\Xtil$
the $2^l$-fold covering of $X$ consisting of oriented flags, as explained before the
theorem. Let $\Lambda \actson (Y,\eta)$ be any essentially free, ergodic, non-singular
action of the countable group $\Lambda$.

The actions $\Gamma \actson X$ and $\Lambda \actson Y$ are SOE if and only if
$\Lambda \actson Y$ is conjugate to an induction of one of the actions
    \begin{itemize}
    \item $\displaystyle \Gamma \times \frac{\Sigma_l}{\Sigma} \actson
    \Xtil/\Sigma \quad\text{for some subgroup}\;\; \Sigma < \Sigma_l \;\;\text{with}\;\;
    -1 \in \Sigma \; ,$
    \item $\displaystyle \frac{\Gammatil \times \Sigma_l/\Sigma}{\{\pm (1,1)\}}
    \actson \Xtil/\Sigma \quad\text{for some}\;\; \Sigma < \Sigma_l \;\;\text{with}\;\;
    -1 \not\in \Sigma \;$ and with $\Gammatil = \{\pm 1\} \cdot \Gamma \subset \GL(n,\R)$.
    \end{itemize}
\end{theorem}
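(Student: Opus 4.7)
The plan is to deduce Theorem \ref{thm.OEsuperrigid-2} from the $\Ufin$-cocycle superrigidity of Theorem \ref{thm.cocycle-superrigid-concrete}(3), applied with $k := d_l$, with $\Gammatil \subset \GL(n,\R)$ in the role of the lattice, and with $H$ as in \eqref{eq.formula}. Recall that $X = \M_{n,k}(\R)/H$ and $\Xtil = \M_{n,k}(\R)/H_0$, and that the $\Gamma$-action on $X$ arises by descent from the left $\Gammatil$-action on $\M_{n,k}(\R)$.

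Given an SOE $\Delta : X_0 \recht Y_0$ between $\Gamma \actson X$ and $\Lambda \actson Y$, form the associated $1$-cocycle $\om : \Gamma \times X \recht \Lambda$. Pull $\om$ back via $\pi_X : \M_{n,k}(\R) \recht X$ and extend it to $\Gammatil \times H$ by declaring it trivial on $H$. Since $-I_n \in \Gammatil$ and $-I_k \in H$ both act trivially on $X$, essential freeness of $\Lambda \actson Y$ forces $\om$ to take the value $e$ on them; hence the extension descends to a $\Lambda$-valued $1$-cocycle $\om'$ for the action of $G = (\Gammatil \times H)/\{\pm(1,1)\}$ on $\M_{n,k}(\R)$. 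As $n \geq 4k+1$, Theorem \ref{thm.cocycle-superrigid-concrete}(3) supplies a continuous group homomorphism $\rho : G \recht \Lambda$ and a measurable $\phi : \M_{n,k}(\R) \recht \Lambda$ with $\om'(g, A) = \phi(g \cdot A)\,\rho(g)\,\phi(A)^{-1}$.

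Because $\Lambda$ is countable and $H_0$ is connected, $\rho|_{H_0}$ is trivial, so $\rho$ factors through $(\Gammatil \times \Sigma_l)/\{\pm(1,1)\}$ and $\phi$ descends to a measurable map $\Xtil \recht \Lambda$. Set $\Sigma := \ker(\rho|_{\Sigma_l}) < \Sigma_l$; then $\rho$ further factors through $(\Gammatil \times \Sigma_l/\Sigma)/\{\pm(1,1)\}$. Twisting $\Delta$ by $\phi$ yields a $\rho$-equivariant stable orbit equivalence $\tilde \Delta$ from (the relevant lift of) $\Xtil$ into $Y$. Essential freeness of $\Gamma \actson X$ combined with Margulis's normal subgroup theorem for the higher-rank, centerless lattice $\Gamma \subset \PSL(n,\R)$ forces $\rho|_\Gamma$ to be injective, so $\tilde\Delta$ descends to an essentially injective, $\rho$-equivariant map $\Xtil/\Sigma \hookrightarrow Y$. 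Since $\Delta$ is an SOE, this map is a complete section for $\Lambda \actson Y$, exhibiting $\Lambda \actson Y$ as conjugate to an induction of the natural action of $(\Gammatil \times \Sigma_l/\Sigma)/\{\pm(1,1)\}$ on $\Xtil/\Sigma$. The dichotomy in the statement corresponds precisely to whether $-1 \in \Sigma$ (the $\{\pm(1,1)\}$-identification then trivializes the $\Gammatil$-component, collapsing the group to $\Gamma \times \Sigma_l/\Sigma$) or $-1 \notin \Sigma$ (the diagonal identification survives intact). The converse direction, that each listed action is SOE with $\Gamma \actson X$, is routine since $\Xtil/\Sigma \recht X$ is a finite covering.

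The main obstacle I expect is the bookkeeping in the descent: verifying that $-I_n \in \Gammatil$ acts on $\Xtil$ precisely as the element $-1 \in \Sigma_l$ (so that $(\Gammatil \times \Sigma_l)/\{\pm(1,1)\}$ genuinely acts on $\Xtil$), and coherently tracking the $-1 \in \Sigma$ versus $-1 \notin \Sigma$ dichotomy through all the quotients. Essential freeness and ergodicity of the intermediate actions on $\Xtil/\Sigma$ needed along the way should follow from Lemma \ref{lemma.measures-and-co}.
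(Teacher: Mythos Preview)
Your approach is correct and is essentially the same as the paper's, which simply says that Theorem~\ref{thm.OEsuperrigid-2} is a special case of the general Theorem~\ref{thm.Haction} (taking $H$ as in \eqref{eq.formula}, so that $-1 \in H$ and the only open normal subgroups of $H$ are the preimages of subgroups $\Sigma < \Sigma_l = H/H_0$). The paper's Theorem~\ref{thm.Haction} in turn is proved exactly along your lines: apply the $\Ufin$-cocycle superrigidity of $G = (\Gammatil \times H)/\{\pm(1,1)\} \actson \M_{n,k}(\R)$, then invoke Lemma~\ref{lemma.OE} to convert the untwisted cocycle into a conjugacy-up-to-induction, and finally classify the open normal subgroups of $G$ acting properly on $\M_{n,k}(\R)$ via Margulis' normal subgroup theorem.

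One point deserves more care than your sketch gives it. You write that ``essential freeness of $\Gamma \actson X$ combined with Margulis's normal subgroup theorem \ldots\ forces $\rho|_\Gamma$ to be injective'', but essential freeness alone does not do this. The actual mechanism, made explicit in the paper's Lemma~\ref{lemma.OE}, is that the twisted map $\tilde\Delta$ is locally a non-singular isomorphism (inherited from the SOE $\Delta$), and this forces $N_1 := \ker\rho$ to act \emph{properly} on $\M_{n,k}(\R)$. Only then does Margulis bite: if $N_1$ met $\Gammatil$ nontrivially outside $\{\pm 1\}$, it would contain a finite-index subgroup of $\Gammatil$, which acts ergodically on $\M_{n,k}(\R)$ by Lemma~\ref{lemma.ergodic-T}, contradicting properness. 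So the chain is SOE $\Rightarrow$ local injectivity of $\tilde\Delta$ $\Rightarrow$ properness of $\ker\rho$ $\Rightarrow$ (Margulis + ergodicity) $\ker\rho \subset \{\pm 1\} \times H$. Your ``bookkeeping'' concern about the $-1 \in \Sigma$ dichotomy is handled in the paper exactly as you anticipate.
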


\begin{example}
If $n \geq 5$ and $\Gamma \subset \PSL(n,\R)$ is a lattice, the action $\Gamma \actson
\rP^{n-1}(\R)$ is a special case of the flag manifold action treated in Theorem
\ref{thm.OEsuperrigid-2}. Hence, $\Gamma \actson \R^n$ and $\Lambda \actson Y$
are SOE if and only if $\Lambda \actson Y$ is conjugate to an induction of either
$\Gamma \actson \rP^{n-1}(\R)$ or its double cover $\Gammatil \actson \R^n/\R_+$,
where $\Gammatil = \{\pm 1\} \cdot \Gamma \subset \GL(n,\R)$.
\end{example}

Finally, combining the work of \cite{F2} and the above OE superrigidity results,
we classify up to stable orbit equivalence, the lattice actions on $\R^n$ and on
flag manifolds, see Theorems \ref{thm.classif-linear} and \ref{thm.classif-flag}.
At the same time, we compute the outer automorphism group of the associated orbit
equivalence relation.

\section{Preliminaries}

We recall here Zimmer's definition of property (T) for a II$_1$
equivalence relation $\cR$ on a standard probability space $(X,\mu)$.

To this end, first define $\cR^{(2)} = \{(x,y,z) \in X \times X
\times X \mid x \cR y \;\text{and}\; y \cR z \}$. Note that $\cR$,
resp.\ $\cR^{(2)}$ come equipped with canonical $\sigma$-finite
measures $\mu^{(1)}$, resp.\ $\mu^{(2)}$, given by
\begin{align*}
\mu^{(1)}(Y) &= \int_X \#\{ y \in X \mid (x,y) \in Y \} \; d\mu(x) =
\int_X \# \{ x \in X \mid (x,y) \in Y \} \; d\mu(y) \; ,\\
\mu^{(2)}(Y) &= \int_X \#\{ (y,z) \in \cR \mid (x,y,z) \in Y \} \; d\mu(x) =
\int_X \#\{ (x,z) \in \cR \mid (x,y,z) \in Y \} \; d\mu(y) \\
& = \int_X \#\{ (x,y) \in \cR \mid (x,y,z) \in Y \} \; d\mu(z) \; .
\end{align*}

\begin{itemize}
\item A \emph{$1$-cocycle of $\cR$ with values in the unitary group $\cU(H)$}
of a Hilbert space $K$ is a Borel map $c : \cR \recht \cU(K)$ satisfying
$c(x,z) = c(x,y) c(y,z)$ for almost all $(x,y,z) \in \cR^{(2)}$.
\item Suppose that $c : \cR \recht \cU(K)$ is a $1$-cocycle of $\cR$.
\begin{itemize}
\item A \emph{unit invariant vector} of $c$ is a Borel map $\xi : X \recht K$
satisfying $\xi(x) = c(x,y) \xi(y)$ for almost all $(x,y) \in \cR$ and $\|\xi(x)\|=1$
for almost all $x \in X$.
\item A \emph{sequence of almost invariant unit vectors} of $c$ is a sequence
of Borel maps $\xi_n : X \recht K$ satisfying
$$\|\xi_n(x) - c(x,y) \xi_n(y)\| \recht 0 \quad\text{for almost all}\quad (x,y) \in \cR$$
and $\|\xi_n(x)\|=1$ for all $n \in \N$ and almost all $x \in X$.
\end{itemize}
\end{itemize}

\begin{definition} \label{def.Tequiv}
A II$_1$ equivalence relation $\cR$ is said to have \emph{property (T) in the
sense of Zimmer} if the following holds: every $1$-cocycle of $\cR$ with values
in the unitary group of a Hilbert space and admitting a sequence of almost
invariant unit vectors, admits a unit invariant vector.
\end{definition}

\section{Property (T) for actions of locally compact groups}

We recalled above Zimmer's definition of property (T) for a II$_1$ equivalence relation.
In fact, one can define property (T) for measured groupoids in general,
see \cite{claire}. We do not need this generality in this paper,
but we do need the concept of property (T) for non-singular actions of locally compact second countable (l.c.s.c.)
groups on measure spaces. For a groupoid approach to this definition, we refer to
\cite{claire}. For the convenience of the reader, we gather in this section the necessary
concepts and results and present them in an operator algebra framework.

All von Neumann algebras are supposed to have separable predual and all locally
compact groups are supposed to be second countable.

If $M$ is a von Neumann algebra, we equip $\Aut(M)$ with the Polish topology
making the functions $\Aut(M) \recht M_* : \al \mapsto \om \circ \al$ continuous
for all $\om \in M_*$. An action $\al$ of a l.c.s.c.\ group $G$ on a von
Neumann algebra $M$, denoted $G \actson[\al] M$, is a continuous group morphism
$\al : G \recht \Aut(M)$.

\begin{itemize}
\item A \emph{$1$-cocycle} of an action $G \actson[\al] M$ with values in the
unitary group $\cU(K)$ of a Hilbert space $K$, is a strongly continuous map
$c : G \recht \cU(M \ovt \B(K))$ satisfying $c(gh) = c(g) (\al_g \ot \id)(c(h))$
for all $g,h \in G$. Note that by Theorem 3 in \cite{Moore3}, it makes no
difference to assume only that $c$ is a measurable map, with the previous
formula holding for almost all $(g,h) \in G \times G$.
\item A \emph{unit invariant vector} of the $1$-cocycle $c$ of $G \actson[\al] M$,
is an element $\xi$ in the $W^*$-module $M \ovt K$ satisfying $\xi^* \xi = 1$
and $c(g) (\al_g \ot \id)(\xi) = \xi$ for all $g \in G$.
\item A \emph{sequence of almost invariant unit vectors} of the $1$-cocycle $c$
of $G \actson[\al] M$, is a sequence $\xi_n \in M \ovt K$ satisfying $\xi_n^* \xi_n = 1$
for all $n$ and $c(g) (\al_g \ot \id)(\xi_n) - \xi_n \recht 0$ $^*$-strongly, uniformly
on compact subsets of $G$.
\item The action $G \actson M$ is said to have \emph{property (T)} if every $1$-cocycle
with values in the unitary group of a Hilbert space and admitting a sequence of
almost invariant unit vectors, admits a unit invariant vector.
\end{itemize}

The following result is proven for discrete groups in \cite[Proposition 2.4]{Z1},
see also \cite[Corollary 5.16]{claire}. These methods work as well in the locally
compact case and for completeness, we give a proof in a von Neumann algebra setup.

\begin{proposition}\label{prop.Tpmp}
Let $G \actson[\al] M$ be an action of the l.c.s.c.\ group $G$ on the von
Neumann algebra $M$. Suppose that $\tau$ is a faithful normal tracial state on $M$,
invariant under $\al$. Then, $G \actson M$ has property (T) if and only if the group
$G$ has property (T).
\end{proposition}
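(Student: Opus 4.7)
The plan is to prove the two implications separately, in both cases using the tracial state $\tau$ to pass between the right $W^*$-module $M \ovt K$ and its $L^2$-completion $H := L^2(M, \tau) \otimes K$.

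\textbf{Sufficiency.} Suppose $G$ has property (T). Given a $1$-cocycle $c : G \to \cU(M \ovt \B(K))$ with almost invariant unit vectors $(\xi_n) \subset M \ovt K$, I define a unitary representation $\pi$ of $G$ on $H$ by $\pi(g)\eta := c(g)(\al_g \otimes \id)(\eta)$. Trace-invariance $\tau \circ \al_g = \tau$ gives unitarity of $\pi(g)$, the cocycle identity for $c$ yields $\pi(gh) = \pi(g)\pi(h)$, and the $\xi_n$ embed in $H$ as unit vectors (since $\|\xi_n\|_H^2 = \tau(\xi_n^* \xi_n) = 1$) that are almost $\pi$-invariant in $L^2$. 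Property (T) of $G$ produces a non-zero $\pi$-invariant $\xi \in H$. From $\pi(g)\xi = \xi$ one computes $\al_g(\xi^*\xi) = \xi^*\xi \in L^1(M)_+$, so every spectral projection of $\xi^*\xi$ is $\al$-invariant. Setting $f_n(t) := t^{-1/2} \mathbf{1}_{[1/n, n]}(t)$, the elements $\eta_n := \xi \cdot f_n(\xi^* \xi) \in M \ovt K$ are $\pi$-invariant and satisfy $\eta_n^* \eta_n = p_n$, an $\al$-invariant projection increasing to the support projection of $\xi^*\xi$. The main technical hurdle is to upgrade this to a single module element $\eta$ with $\eta^* \eta = 1$: I will do so by a Zorn-type maximality argument on families of $\pi$-invariant module elements with mutually $W^*$-module-orthogonal source projections, orthogonalising via a Hilbert-module Gram--Schmidt as needed. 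If the total source projection were strictly less than $1$, cutting the $\xi_n$ by the $\al$-invariant complementary projection would produce new almost invariant unit vectors for the cocycle restricted to the corresponding corner, yielding further invariants by the same construction and contradicting maximality.

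\textbf{Necessity.} Suppose $G \actson M$ has property (T). I rely on the standard dual characterisation: $G$ has property (T) if and only if every irreducible unitary representation of $G$ admitting almost invariant unit vectors is trivial. So let $\sigma : G \to \cU(H_\sigma)$ be any irreducible unitary representation with almost invariant unit vectors $(\eta_n)$. I form the cocycle $c(g) := 1 \otimes \sigma(g) \in \cU(M \ovt \B(H_\sigma))$; the cocycle identity is immediate since $\al_g \otimes \id$ fixes $1 \otimes \B(H_\sigma)$. The vectors $\tilde \eta_n := 1 \otimes \eta_n \in M \ovt H_\sigma$ are unit vectors, almost invariant for $c$, so the hypothesis furnishes a unit invariant $\tilde \xi \in M \ovt H_\sigma$. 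Setting $P := \tilde\xi\tilde\xi^* \in M \ovt \B(H_\sigma)$ (a projection, as $\tilde\xi^*\tilde\xi = 1$), which is invariant under $\be_g := \Ad(c(g)) \circ (\al_g \otimes \id)$, and applying the partial trace $E := \tau \otimes \id$, I obtain $T := E(P) \in \B(H_\sigma)$. A direct computation using $\tau \circ \al_g = \tau$ and $c(g) = 1 \otimes \sigma(g)$ yields $T \geq 0$, $\Tr T = \tau(\tilde\xi^* \tilde\xi) = 1$, and $\sigma(g) T \sigma(g)^* = T$ for every $g \in G$.

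Hence $T$ is a non-zero positive trace-class operator on $H_\sigma$ commuting with the irreducible representation $\sigma$, so Schur's lemma forces $T = t \cdot \id_{H_\sigma}$ for some scalar $t$. If $H_\sigma$ were infinite-dimensional, the trace-class condition would require $t = 0$, contradicting $\Tr T = 1$; thus $H_\sigma$ is finite-dimensional. In a finite-dimensional unitary representation, almost invariance coincides with genuine invariance, and an irreducible representation possessing an invariant vector is the trivial representation. This shows that every irreducible unitary representation of $G$ with almost invariant vectors is trivial, hence $G$ has property (T). I expect the chief difficulty in the entire proof to be the assembly step in the sufficiency direction, where partial invariants with source projections less than $1$ must be combined, via the Hilbert-module and maximality machinery, into a single module element $\eta$ with $\eta^*\eta = 1$.
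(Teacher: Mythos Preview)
Your necessity argument has a genuine gap: the ``dual characterisation'' you invoke---that $G$ has property~(T) if and only if every \emph{irreducible} unitary representation with almost invariant unit vectors is trivial---is false. Take $G = \Z$ (or any non-compact abelian l.c.s.c.\ group): its irreducibles are one-dimensional characters, and a non-trivial character has no almost invariant unit vectors, so your condition holds vacuously; yet $\Z$ does not have property~(T). The correct dual statement is that $1_G$ is \emph{isolated} in $\hat G$ for the Fell topology, meaning a single Kazhdan pair $(Q,\varepsilon)$ works uniformly over all irreducibles---a strictly stronger requirement. The repair is simple and already implicit in your computation: drop irreducibility. For an arbitrary $\sigma$ with almost invariant unit vectors, your construction still yields a non-zero positive trace-class $T\in\B(H_\sigma)$ commuting with $\sigma(G)$; a spectral projection $\chi_{[\varepsilon,\infty)}(T)$ then gives a non-zero finite-dimensional $\sigma(G)$-invariant subspace, and one concludes via \cite[Theorem~2.12.9]{BHV}. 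This is exactly the paper's route.

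On the sufficiency side your strategy is correct in outline but diverges from the paper in the assembly step, which is more delicate than your sketch suggests: your Zorn argument yields a family $\{\eta_i\}$ with mutually orthogonal source projections $p_i$ summing to~$1$, but combining them into a single $\eta\in M\ovt K$ with $\eta^*\eta=1$ also requires orthogonalising the \emph{range} projections $\eta_i\eta_i^*$, and the Gram--Schmidt cut $\eta_j\mapsto(1-\sum_{i<j}\eta_i\eta_i^*)\eta_j$ can annihilate $\eta_j$ entirely. The paper sidesteps this by exploiting the stronger consequence of property~(T) that one can choose $\pi$-invariant $\eta_n\in L^2(M,\tau)\ot K$ with $\|\eta_n-\xi_n\|_2\to 0$; then the right supports of the polar parts $v_n\in M\ovt K$ of $\eta_n$ tend strongly to~$1$, and a single Murray--von Neumann comparison $q\prec p$ in the fixed-point algebra of the $2\times 2$ linking algebra finishes the job without any maximality machinery.
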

\begin{proof}
Suppose first that $G$ has property (T). Let $c : G \recht \cU(M \ovt \B(K))$ be
a $1$-cocycle of $G \actson M$ having $\xi_n \in M \ovt K$ as a sequence of almost
invariant unit vectors. Define the unitaries $u_g$ on $\rL^2(M,\tau)$ by extending $\al_g$. Then,
$$
\pi : G \recht \cU(\rL^2(M,\tau) \ot K) : \pi(g) = c(g) (u_g \ot 1)
$$
is a unitary representation of $G$ and we can view $\xi_n$ as a sequence of
almost invariant unit vectors. Since $G$ has property (T), $\pi$ admits a
unit invariant vector. Even more, we find a sequence $\eta_n \in \rL^2(M,\tau) \ot K$
of $\pi$-invariant vectors satisfying $\eta_n - \xi_n \recht 0$.

Since $\|\xi_n\| = 1$, it follows that $\|\xi_n^* \eta_n - 1 \|_2 \recht 0$. Hence, the
right support projection of $\xi_n^* \eta_n$ converges strongly to $1$. A fortiori,
the right support projection of $\eta_n$ converges to $1$. We view $\eta_n$ as a
closed operator from $\rL^2(M,\tau)$ to $\rL^2(M,\tau) \ot K$. Taking the polar
decomposition of $\eta_n$, we find $v_n \in M \ovt K$ satisfying $c(g) (\al_g \ot \id)(v_n) =
v_n$ for all $g \in G, n \in \N$ and such that $v_n^* v_n$ is a sequence of projections in $M$
converging strongly to $1$. Define the von Neumann algebra
$$N = \begin{pmatrix} M \ovt \B(K) & M \ovt K \\ (M \ovt K)^* & M \end{pmatrix} \; .$$
Define the action $(\gamma_g)$ of $G$ on $N$ by
$$\gamma_g\begin{pmatrix} a & b \\ e & f \end{pmatrix} = \begin{pmatrix}
c(g) (\al_g \ot \id)(a) c(g)^* & c(g) (\al_g \ot \id)(b) \\ (\al_g \ot \id)(e)c(g)^* &
\al_g(f) \end{pmatrix} \; .$$
Define $p = \bigl(\begin{smallmatrix} 1 & 0 \\ 0 & 0 \end{smallmatrix}\bigr)$,
$q = 1-p$ and $w_n = \bigl(\begin{smallmatrix} 0 & v_n \\ 0 & 0 \end{smallmatrix}\bigr)$.
Then, $w_n$ is a sequence of partial isometries in the fixed point algebra $N^G$,
satisfying $w_n \in p N^G q$ and $w_n^* w_n \recht q$ strongly. It follows that
$q \prec p$ in the von Neumann algebra $N^G$. So, we find $v \in M \ovt K$
satisfying $c(g) (\al_g \ot \id)(v) = v$ for all $g$ and $v^* v = 1$.
Hence, $G \actson M$ has property (T).

Suppose conversely that $G \actson M$ has property (T). Let
$\pi : G \recht \cU(K)$ be a strongly continuous unitary
representation of $G$ admitting $\xi_n$ as a sequence of
almost invariant unit vectors. By \cite[Theorem 2.12.9]{BHV},
it is sufficient to prove that $\pi$ has a non-zero finite dimensional
$\pi(G)$-invariant subspace. Define $c : G \recht \cU(M \ovt \B(K)) : c(g) = 1 \ot \pi(g)$.
Obviously, $c$ is a $1$-cocycle of $G \actson M$ having $1 \ot \xi_n$ as a sequence of almost
invariant unit vectors. By property (T) of $G \actson M$, we find $\xi \in M \ovt K$
satisfying $\xi^* \xi = 1$ and $c(g)(\al_g \ot \id)(\xi) = \xi$ for all $g \in G$.
Denoting again by $u : g \mapsto u_g$ the representation of $G$ on $\rL^2(M,\tau)$
obtained by extending $\al_g$, we find that $u \ot \pi$ admits an invariant unit
vector $\xi$. Identify $\rL^2(M,\tau) \ot K$ with the Hilbert space of Hilbert-Schmidt
operators from $\overline{\rL^2(M,\tau)}$ to $K$. Then, $T:=\xi \xi^*$ is a non-zero
trace-class operator on $K$ satisfying $\pi(g) T \pi(g)^* = T$ for all $g \in G$. So,
for $\eps > 0$ sufficiently small, the spectral projection $\chi_{[\eps,+\infty)}(T)$
projects onto a non-zero finite dimensional $\pi(G)$-invariant subspace of $K$.
\end{proof}

The following is a slight generalization of \cite[Theorem 5.3]{claire}. When
$H \actson M$ is an action, we denote by $M^H$ the von Neumann algebra of $H$-fixed points.

\begin{lemma} \label{lemma.induced}
Let $G \actson[\al] M$ be an action of the l.c.s.c.\ group $G$ on the von Neumann
algebra $M$. Let $H \lhd G$ be a closed normal subgroup and assume that there is a
$^*$-isomorphism $\theta : M \recht \rL^\infty(H) \ovt M^H$ satisfying
$\theta \circ \al_h = (\rho_h \ot \id)\circ \theta$ for all $h \in H$,
where $\rho_h$ denotes the right translation by $h$ on $\rL^\infty(H)$.

Then, $G \actson M$ has property (T) if and only if $G/H \actson M^H$ has property (T).
\end{lemma}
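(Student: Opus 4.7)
The plan is to handle the two implications separately. For the ``only if'' direction, given a $1$-cocycle $\bar c$ of $G/H \actson M^H$ with almost invariant vectors $\xi_n \in M^H \ovt K$, I would lift it to $c(g) := \bar c(gH)$, a $1$-cocycle of $G \actson M$ for which the same $\xi_n$ (viewed in $M \ovt K$ via $M^H \ovt K \subset M \ovt K$) remain almost invariant. Property (T) of $G \actson M$ produces an invariant vector $\xi \in M \ovt K$; since $c|_H \equiv 1$, we must have $\xi \in (M \ovt K)^H = M^H \ovt K$, so $\xi$ is invariant for $\bar c$.

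For the harder ``if'' direction, let $c : G \recht \cU(M \ovt \B(K))$ be a $1$-cocycle with almost invariant vectors $\xi_n$. The first step is to trivialize $c|_H$. For the right translation action of $H$ on $\rL^\infty(H) \ovt M^H$, every $1$-cocycle $d$ is a coboundary, via the explicit formula $u(x) := d(x^{-1})(x)$, where $d(h)(x)$ is viewed as a jointly measurable function of $x, h \in H$; a direct computation using the cocycle identity $d(hk)(x) = d(h)(x)\,d(k)(xh)$ confirms $d(h) = u(\rho_h \ot \id)(u^*)$. Transporting through $\theta$ gives $u \in \cU(M \ovt \B(K))$ with $c(h) = u(\al_h \ot \id)(u^*)$ for all $h \in H$. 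Replacing $c$ by the cohomologous cocycle $\tilde c(g) := u^* c(g)(\al_g \ot \id)(u)$ and $\xi_n$ by $\eta_n := u^* \xi_n$, one has $\tilde c|_H \equiv 1$ while $\eta_n$ remains almost invariant. Then, using normality of $H$ and writing $gh = (ghg^{-1})g$, the two forms of the cocycle relation $\tilde c(gh) = \tilde c(g)$ and $\tilde c((ghg^{-1})g) = (\al_{ghg^{-1}} \ot \id)(\tilde c(g))$ must coincide, forcing $\tilde c(g) \in (M \ovt \B(K))^H = M^H \ovt \B(K)$. Hence $\tilde c$ descends to a $1$-cocycle $\bar c : G/H \recht \cU(M^H \ovt \B(K))$ for the quotient action $G/H \actson M^H$.

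The main obstacle is the third step: producing almost invariant vectors for $\bar c$ in $M^H \ovt K$ from the $\eta_n \in M \ovt K$. Since $\tilde c|_H \equiv 1$, the $\eta_n$ are almost $H$-invariant, meaning that through $\theta$ they are approximately constant as $\rL^\infty(H)$-valued functions with values in $M^H \ovt K$. Fixing a compactly supported probability measure $\nu$ on $H$, I would set $\eta'_n := (\nu \ot \id \ot \id)(\eta_n) \in M^H \ovt K$ via the slice map. Almost $H$-invariance forces $(\eta'_n)^* \eta'_n \recht 1$, because the cross-terms $\eta_n(x)^* \eta_n(y)$ approach $\eta_n(x)^* \eta_n(x) = 1$ uniformly for $x, y \in \operatorname{supp} \nu$. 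Using that $\al_g$ preserves $(M^H)' \cap M$ and restricts to $\bar\al_{gH}$ on $M^H$, a careful comparison of the almost $\tilde c$-invariance of $\eta_n$ with the slice-map identities (over both $\nu$ and its $\al_g$-transport) yields $\bar c(gH)(\bar\al_{gH} \ot \id)(\eta'_n) - \eta'_n \recht 0$ uniformly on compacts of $G/H$. After polar-decomposing $\eta'_n$ to obtain exact unit vectors, property (T) of $G/H \actson M^H$ gives a unit invariant vector $\xi \in M^H \ovt K$ for $\bar c$, and $u \xi \in M \ovt K$ is then a unit invariant vector for $c$.
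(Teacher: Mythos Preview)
Your overall architecture---set up a bijection between cohomology classes of $G \actson M$ and $G/H \actson M^H$, and then transfer (almost) invariant vectors in both directions---matches the paper. The ``only if'' direction and the descent argument (your step~2) are fine.

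There is a minor technicality in step~1: the formula $u(x) = d(x^{-1})(x)$ requires a jointly measurable representative $(h,x) \mapsto d(h)(x)$ and pointwise evaluation along the anti-diagonal, which is not automatic since $c(h) \in \cU(\rL^\infty(H) \ovt M^H \ovt \B(K))$ is only defined almost everywhere in $x$. The paper addresses exactly this point by invoking Moore's theorem to get a jointly measurable version and then using Fubini to select a good base point $k_0$; your formula is the special case $k_0 = e$, which may lie in a null set.

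The real gap is in step~3. Two issues:
\begin{itemize}
\item Your justification that $(\eta'_n)^*\eta'_n \to 1$ via ``$\eta_n(x)^*\eta_n(y) \to 1$ uniformly for $x,y \in \operatorname{supp}\nu$'' is not available: $^*$-strong convergence of $(\al_h \ot \id)(\eta_n) - \eta_n$ is an $L^2$-integrated statement over $H$, not a pointwise or uniform one. (An $L^2$/Cauchy--Schwarz estimate does rescue $(\eta'_n)^*\eta'_n \to 1$, but not by the mechanism you cite.)
\item More seriously, almost invariance of $\eta'_n$ for $\bar c$ does not follow from your ``slice-map identities''. The slice map gives
\[
\bar c(gH)\,(\nu \ot \id)\bigl((\al_g \ot \id)(\eta_n)\bigr) - \eta'_n \;\to\; 0,
\]
but you need $\bar c(gH)(\bar\al_{gH}\ot\id)(\eta'_n) - \eta'_n \to 0$. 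These agree only if $(\nu\ot\id)\circ(\al_g\ot\id) \approx (\bar\al_{gH}\ot\id)\circ(\nu\ot\id)$ on $\eta_n$, and there is no reason for this: under $\theta$, the automorphism $\al_g$ does \emph{not} act as $\id \ot \bar\al_{gH}$ on $\rL^\infty(H)\ovt M^H$ (only its restriction to $1 \ot M^H$ is $\bar\al_{gH}$, while it scrambles the $\rL^\infty(H)$-coordinate in a $g$-dependent way). Your appeal to $\al_g$ preserving $(M^H)'\cap M$ does not close this gap.
\end{itemize}

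The paper avoids this obstruction by proving instead that $\eta_n$ is $^*$-strongly close to some $\zeta_n \in M^H \ovt K$: it equips the set $X = \{v \in M\ovt K : v^*v=1\}$ with an integrated metric $d(v,w)=\int_H d_0(v(h),w(h))\,d\mu(h)$ inducing the $^*$-strong topology, and argues by contradiction (summable subsequence, Fubini, a good point $h_0$) that $d(\eta_n, M^H\ovt K) \to 0$. Once $\eta_n - \zeta_n \to 0$ with $\zeta_n \in M^H\ovt K$, one has $(\al_g\ot\id)(\zeta_n) = (\bar\al_{gH}\ot\id)(\zeta_n)$ \emph{exactly}, and almost invariance for $\bar c$ follows immediately. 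You should replace your averaging argument by this approximation argument.
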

\begin{proof}
We say that two $1$-cocycles $c_1,c_2$ of $G \actson[\al] M$ with values in $\cU(K)$
are unitarily equivalent if there exists a unitary $v \in \cU(M \ovt \B(K))$
satisfying $c_1(g) = v c_2(g) (\al_g \ot \id)(v^*)$ for all $g \in G$.
We denote by $\rH^1(G \actson[\al] M,\cU(K))$ the set of equivalence classes of $1$-cocycles.

In the first part of the proof, we show that the obvious map $$\Theta :
\rH^1(G/H \actson M^H, \cU(K)) \recht \rH^1(G \actson M,\cU(K)) : \Theta(c) =
c \circ \pi \quad\text{with}\;\; \pi : G \recht G/H \; ,$$ is a bijection.
In the second part of the proof, we show that this map and its inverse preserve
the property of having invariant, resp.\ almost invariant, vectors. Both parts
together show that property (T) of $G \actson M$ is equivalent with property (T)
of $G/H \actson M^H$.

It is straightforward to check that $\Theta$ is well defined and injective.
Suppose that $c : G \recht \cU(M \ovt \B(K))$ is a $1$-cocycle of $G \actson M$.
In order to prove that $c$ is in the range of $\Theta$, it suffices to prove
that $c$ is unitarily equivalent with $c'$ satisfying $c'(h) = 1$ for all $h \in H$.
Identify throughout $\cU(M \ovt \B(K))$ with $\cU(\rL^\infty(H) \ovt M^H \ovt \B(K))$
and view the latter as measurable functions $H \recht \cU(M^H \ovt \B(K))$,
modulo equality almost everywhere. By \cite[Theorem 1]{Moore3},
take a measurable map $\vphi : H \times H \recht \cU(M^H \ovt \B(K))$
such that $c(h) = \vphi( \cdot , h)$ for all $h \in H$. Since $c$ is a
cocycle, we find that
$$\vphi(k,hg) = \vphi(k,h) \vphi(kh,g) \quad\text{for almost all}\;\;
(k,h,g) \in H \times H \times H \; .$$
By the Fubini theorem, take $k_0 \in H$ such that for almost all $(h,g)
\in H \times H$, the previous equality holds for $(k_0,h,g)$. Define the
unitary $v \in \cU(M \ovt \B(K))$ as $v = \vphi(k_0,k_0^{-1} \cdot)^*$
and set $c'(g) = v c(g) (\al_g \ot \id)(v^*)$. By construction, $c'(g) = 1$
for almost all $g \in H$ and hence for all $g \in H$ by continuity.

It is an exercise to check that the $1$-cocycle $c \in \rH^1(G/H \actson M^H,\cU(K))$
has a unit invariant vector if and only if $\Theta(c)$ has. Also, a sequence of
almost invariant unit vectors for $c$ defines a sequence of almost invariant
unit vectors for $\Theta(c)$. Finally, suppose that $\xi_n \in M \ovt K$ is a
sequence of almost invariant unit vectors for $\Theta(c)$. In order to conclude
the proof of the lemma, it suffices to show that there exists a sequence
$\eta_n \in M^H \ovt K$ satisfying $\eta_n^* \eta_n = 1$ for all $n$ and
$\xi_n - \eta_n \recht 0$ $^*$-strongly.

Define the sets $X = \{v \in M \ovt K \mid v^* v = 1\}$ and $Y = \{v \in M^H
\ovt K \mid v^* v = 1 \}$. Through the isomorphism $M \cong \rL^\infty(H) \ovt M^H$,
we identify $X$ with the set of measurable functions from $H$ to $Y$
(modulo equality almost everywhere). Take a bounded metric $d_0$ on $Y$
inducing the strong$^*$ topology. Let $\mu$ be a probability measure on $H$
in the same measure class as the Haar measure. Following \cite[page 5]{Moore3},
define the metric $d$ on $X$ by
$$d(v,w) = \int_H d_0(v(h),w(h)) d\mu(h) \; .$$
Then, $d$ induces the strong$^*$ topology on $X$. It is easy to check that
when $v_n,w_n \in X$ such that $d(v_n,w_n)$ is summable, then $v_n(h) - w_n(h)
\recht 0$ $^*$-strongly for almost every $h \in H$ (see \cite[Proposition 6]{Moore3}).

View $Y \subset X$ as constant functions. We have to prove that $d(\xi_n,Y) \recht 0$.
Suppose the contrary. Write $H$ as an increasing union of compact subsets $H_n$.
After passage to a subsequence, we find $\eps > 0$ such that $d(\xi_n,Y) > \eps$
for all $n$ and such that
$$d(\xi_n , (\al_g \ot \id)(\xi_n)) < 2^{-n} \quad\text{for all}\;\; n \in \N, g \in H_n \; .$$
It follows that for all $g \in H$, we have
$$d_0(\xi_n(h), \xi_n(hg)) \recht 0 \quad\text{for almost all}\;\; h \in H \; .$$
By the Fubini theorem, take $h_0 \in H$ such that $d_0(\xi_n(h_0), \xi_n(h_0 g))
\recht 0$ for almost all $g \in H$. It follows that $d(\xi_n,\xi_n(h_0)) \recht 0$,
contradicting the assumption that $d(\xi_n,Y) > \eps$ for all $n$.
\end{proof}

\begin{proposition}\label{prop.leftright}
Let $\cG$ be a l.c.s.c.\ group with closed subgroups $H_1,H_2$. Then,
$H_1 \actson \rL^\infty(\cG/H_2)$ has property (T) if and only if $H_2
\actson \rL^\infty(\cG/H_1)$ has property (T).
\end{proposition}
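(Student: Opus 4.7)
The plan is to introduce a bridging action and apply Lemma \ref{lemma.induced} twice. Let $G := H_1 \times H_2$ act on $\cG$ by $(h_1,h_2) \cdot g = h_1 g h_2^{-1}$, and denote by $\al$ the resulting action of $G$ on $M := \rL^\infty(\cG)$. Both $\{e\} \times H_2$ and $H_1 \times \{e\}$ are closed normal subgroups of $G$, so the strategy is to use Lemma \ref{lemma.induced} to identify property (T) of $G \actson[\al] M$ with property (T) of each of the two actions in the statement, and then chain the equivalences.

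For the normal subgroup $\{e\} \times H_2$, the fixed-point algebra is $M^{\{e\} \times H_2} = \rL^\infty(\cG/H_2)$, on which $G/(\{e\} \times H_2) \cong H_1$ acts by the natural left action. To verify the trivialization hypothesis of Lemma \ref{lemma.induced}, I would fix a Borel section $s : \cG/H_2 \recht \cG$ of the canonical projection --- whose existence is Mackey's Borel cross-section theorem for l.c.s.c.\ group quotients --- and use the Borel isomorphism $\cG \recht H_2 \times \cG/H_2 : g \mapsto (s(gH_2)^{-1} g,\, gH_2)$. A direct computation in these coordinates shows that the restricted action of $\{e\} \times H_2$ becomes right translation on the first factor, so the induced $^*$-isomorphism $M \cong \rL^\infty(H_2) \ovt \rL^\infty(\cG/H_2)$ intertwines $\al|_{\{e\} \times H_2}$ with $\rho \ot \id$. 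Lemma \ref{lemma.induced} then yields that $G \actson M$ has property (T) if and only if $H_1 \actson \rL^\infty(\cG/H_2)$ does.

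The symmetric argument applied to $H_1 \times \{e\}$, using a Borel section of $\cG \recht H_1 \backslash \cG$, yields the analogous equivalence with $H_2 \actson \rL^\infty(H_1 \backslash \cG)$, where $H_2$ now acts by right multiplication. The inversion $g \mapsto g^{-1}$ conjugates this to the standard left action $H_2 \actson \rL^\infty(\cG/H_1)$ appearing in the statement, and chaining the two equivalences completes the proof. The main (and essentially only) technical point is verifying the trivialization hypothesis of Lemma \ref{lemma.induced}; this rests on Mackey's Borel cross-section theorem together with a routine check that the induced identification converts restriction of $\al$ to the factor subgroup into right translation, plus a small amount of bookkeeping with left/right conventions for the final inversion step.
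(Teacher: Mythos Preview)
Your proposal is correct and follows essentially the same approach as the paper: introduce the bridging action of $H_1 \times H_2$ on $\rL^\infty(\cG)$, apply Lemma \ref{lemma.induced} to each factor using a Borel cross-section (the paper cites this as the Effros-Mackey theorem via \cite{kechris}), and chain the two equivalences. Your explicit mention of the inversion step to pass from $H_2 \actson \rL^\infty(H_1 \backslash \cG)$ to $H_2 \actson \rL^\infty(\cG/H_1)$ is a detail the paper leaves implicit, but otherwise the arguments are the same.
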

\begin{proof}
Set $M = \rL^\infty(\cG)$ and $G = H_1 \times H_2$ acting by left-right
translations on $M$~:
$$(\al_{(g,h)}(F))(x) = F(g^{-1} x h) \quad \text{for all}\quad F \in \rL^\infty(\cG),
x \in \cG, g \in H_1, h \in H_2 \; .$$
We apply Lemma \ref{lemma.induced} to $G \actson M$ and the closed normal subgroups
$H = H_i$, $i=1,2$ of $G$. By the Effros-Mackey theorem
(see e.g.\ \cite[Theorem II.12.17]{kechris}),
the quotient map $\cG \recht \cG/H$ admits a Borel
lifting and hence, there exists an $H$-equivariant
isomorphism $M \recht \rL^\infty(H) \ovt M^H$. So,
by Lemma \ref{lemma.induced}, property (T) of $G \actson M$
is equivalent with property (T) of $H_1 \actson \rL^\infty(\cG/H_2)$
as well as with property (T) of $H_2 \actson \rL^\infty(\cG/H_1)$.
\end{proof}

\section{The lattice actions $\Gamma \actson \R^n$ have property (T)}

Recall from \cite{Fu} that if $\Gamma \actson (X,\mu)$ is a free
ergodic p.m.p.\ action, then property (T) of $\cR(\Gamma \actson X)$
in the sense of Zimmer, is equivalent with property (T) of the group
$\Gamma$ (see also \cite[Proposition 2.4]{Z1} and Proposition
\ref{prop.Tpmp} above).

If $\Gamma$ is a property (T) group, the fundamental group of $\cR(\Gamma \actson X)$
is countable for any free ergodic p.m.p.\ action (see \cite[Corollary 1.8]{gg} if
$\Gamma$ is moreover ICC and see \cite[Theorem 5.9]{ioana} for the general case).

But more is true: we proved in \cite[Theorem 6.1]{proc-Connes} that the fundamental group of a II$_1$ equivalence relation $\cR$ on $(X,\mu)$ is countable whenever the full group $[\cR]$ contains a property (T) group that implements an ergodic action on $(X,\mu)$. As a result, the following theorem is rather surprising: we obtain a II$_1$ equivalence relation $\cR$ with property (T) and fundamental group $\R_+$; hence, none of the $\cR^t$ can be implemented by a free action of a group and none of the $\cR^t$ can be implemented by a possibly non-free action of a property (T) group.

\begin{theorem} \label{thm.equiv-T}
Let $\Gamma < \SL(n,\R)$ be a lattice and let $\cR$ be the II$_1$ equivalence relation obtained by restricting the orbit
equivalence relation $\cR(\Gamma \actson \R^n)$ to a set of Lebesgue measure
$1$. If $n \geq 4$, the equivalence relation $\cR$ has property (T) in the sense
of Zimmer, but nevertheless $\cF(\cR) = \Rp$. In particular,
\begin{itemize}
\item none of the equivalence relations $\cR^t$, $t > 0$, can be implemented by a free action of a group,
\item none of the equivalence relations $\cR^t$, $t > 0$, can be implemented by a possibly non-free action of a property (T) group.
\end{itemize}
\end{theorem}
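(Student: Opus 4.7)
I would break the theorem into two substantive claims---property~(T) of the II$_1$ equivalence relation $\cR$ and the equality $\cF(\cR)=\R_+$---with the two bulleted consequences then following formally from the cited literature. For property~(T), my approach is first to prove property~(T) of the non-singular action $\Gamma\actson(\R^n,\mathrm{Leb})$ in the operator-algebra sense of Section~3, and then to transfer it to the II$_1$ relation $\cR$ via the standard correspondence between $1$-cocycles of $\cR|_W$ and $\Gamma$-equivariant $1$-cocycles of the action on $\rL^\infty(\R^n)$ (a cocycle on $\cR|_W$ extends to the full orbit relation by $\Gamma$-translating $W$, and an invariant unit vector on the full relation restricts to one on $\cR|_W$). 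To handle the action itself, I would identify $\R^n$, up to a null set, with the homogeneous space $\cG/H$ where $\cG=\SL(n,\R)$ and $H=\mathrm{Stab}_\cG(e_1)\cong\SL(n-1,\R)\ltimes\R^{n-1}$. Applying Proposition~\ref{prop.leftright} with $(H_1,H_2)=(\Gamma,H)$ converts property~(T) of $\Gamma\actson\rL^\infty(\cG/H)$ into property~(T) of $H\actson\rL^\infty(\cG/\Gamma)$; since $\Gamma$ is a lattice this latter action preserves a finite measure, so Proposition~\ref{prop.Tpmp} reduces the problem to property~(T) of the group $H$ itself. For $n\geq 4$ one has $n-1\geq 3$, and $H=\SL(n-1,\R)\ltimes\R^{n-1}$ has property~(T) by combining Kazhdan's theorem for $\SL(n-1,\R)$ with the classical relative property~(T) for the pair $(H,\R^{n-1})$.

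For the fundamental group, each dilation $\phi_\lambda(x)=\lambda x$ with $\lambda>0$ commutes with the linear $\Gamma$-action and pushes Lebesgue measure forward to $\lambda^{-n}$ times itself. Consequently $\phi_\lambda$ induces an isomorphism of equivalence relations between $\cR|_W$ and $\cR|_{\phi_\lambda(W)}$; since $|\phi_\lambda(W)|=\lambda^n$, this is precisely an isomorphism $\cR\cong\cR^{\lambda^n}$. As $\lambda$ ranges over $\R_+$ so does $\lambda^n$, giving $\cF(\cR)=\R_+$. The two non-existence statements now follow formally. If $\cR^t$ were implemented by a free action of a countable group $\Lambda$, then property~(T) of $\cR^t$ (inherited from $\cR$ under amplification) together with the Zimmer--Furman equivalence~\cite{Fu} would force $\Lambda$ to have property~(T); then \cite[Theorem~6.1]{proc-Connes} would make $\cF(\cR^t)$ countable, contradicting $\cF(\cR^t)=\cF(\cR)=\R_+$. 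The possibly non-free case with $\Lambda$ already assumed to have property~(T) is immediate from the same theorem of \cite{proc-Connes}.

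I expect the main obstacle to be the bookkeeping in the first step: the Section~3 definition of property~(T) is formulated for actions on von~Neumann algebras and passes cleanly through Propositions~\ref{prop.leftright} and~\ref{prop.Tpmp}, so one must verify carefully that almost invariant (respectively, invariant) unit vectors correspond across the dictionary between the action on $(\R^n,\mathrm{Leb})$ and the II$_1$ relation $\cR$ on a set of finite Lebesgue measure. The property~(T) computation for the stabilizer $\SL(n-1,\R)\ltimes\R^{n-1}$ is classical, and the scaling argument for $\cF(\cR)=\R_+$ is transparent.
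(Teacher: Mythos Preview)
Your proposal is correct and follows essentially the same route as the paper: identify $\R^n$ with $\SL(n,\R)/H$ for $H=\mathrm{Stab}(e_1)$, apply Proposition~\ref{prop.leftright} to swap the roles of $\Gamma$ and $H$, use Proposition~\ref{prop.Tpmp} on the resulting finite-measure action to reduce to property~(T) of $H\cong\SL(n-1,\R)\ltimes\R^{n-1}$, and obtain $\cF(\cR)=\R_+$ from the commuting dilations. The paper simply asserts that property~(T) of $\cR$ ``amounts to'' property~(T) of the action $\Gamma\actson\rL^\infty(\R^n)$, so your explicit acknowledgment of the dictionary between the two definitions is, if anything, more scrupulous than the original.
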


\begin{proof}
Proving property (T) of $\cR$ amounts to proving property (T) for the
action $\Gamma \actson \rL^\infty(\R^n)$.

Define the l.c.s.c.\ group $\cG = \SL(n,\R)$ and set $H_1 = \Gamma$.
Consider the linear action $\cG \actson \R^n$ and set $H_2 = \{A \in \SL(n,\R)
\mid A e_1 = e_1\}$, where $e_1$ denotes the first basis vector of $\R^n$.
By construction, the action $\Gamma \actson \rL^\infty(\R^n)$ can be
viewed as $H_1 \actson \rL^\infty(\cG/H_2)$. Hence, by Proposition \ref{prop.leftright},
property (T) for this last action is equivalent with property (T) of $H_2 \actson
\rL^\infty(\cG/H_1)$. This action admits a finite invariant measure, because $H_1$
is a lattice in $\cG$. Moreover, $H_2 \cong \SL(n-1,\R) \ltimes \R^{n-1}$, which
has property (T) for $n \geq 4$. So, it follows from Proposition \ref{prop.Tpmp}
that $H_2 \actson \rL^\infty(\cG/H_1)$ has property (T).

The action on $\R^n$ by multiples of the identity matrix scales the Lebesgue
measure and commutes with the action of $\Gamma$. Hence, the fundamental
group of $\cR$ equals $\R_+$.
The statements about implementing $\cR^t$ by group actions, follow from the discussion preceding the theorem.
\end{proof}

Note that in the case $n=2$, a similar reasoning yields the following result
of \cite{aubert}: the action $\SL(2,\Z) \actson \R^2$ is amenable and hence,
$\rL^\infty(\R^2) \rtimes \SL(2,\Z)$ is isomorphic with the unique hyperfinite II$_\infty$ factor.

\section{Cocycle and OE superrigidity theorems}

\subsection{Proof of Theorem \ref{thm.cocycle-superrigid-concrete}}

We prove in this section the cocycle superrigidity theorem \ref{thm.cocycle-superrigid-concrete} as a consequence of the more general Theorem \ref{thm.cocyclesuperrigid} below.

We do not know whether $\SL(n,\R) \actson \R^n$ is $\Ufin$-cocycle superrigid for $n=3,4$. On the other hand, some condition is needed on the Polish group in which the $1$-cocycle takes its values. Indeed, almost by construction, we have the following result, that we also prove at the end of this subsection.

\begin{proposition}\label{prop.notuntwist}
Let $n \geq 3$. The action $\SL(n,\Z) \actson \R^n$ admits a $1$-cocycle with values in \linebreak $\SL(n-1,\R) \ltimes \R^{n-1}$ that is not cohomologous to a group morphism.
\end{proposition}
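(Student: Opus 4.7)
The plan is to build $\omega$ as the stabilizer cocycle for the transitive action $\SL(n,\R)\actson\R^n\setminus\{0\}$, and then argue via Margulis superrigidity that no measurable trivialisation by a homomorphism can exist. Concretely, the stabilizer of $e_1\in\R^n$ in $\SL(n,\R)$ is the subgroup $H:=\SL(n-1,\R)\ltimes\R^{n-1}$, so by the Effros--Mackey theorem one may choose a Borel section $s:\R^n\setminus\{0\}\recht\SL(n,\R)$ of the map $g\mapsto g\,e_1$. Setting $\omega(\gamma,v):=s(\gamma v)^{-1}\gamma\,s(v)$ gives a $1$-cocycle of $\SL(n,\R)\actson\R^n$ with values in $H$, and I would view $\omega$ as a cocycle for $\Gamma=\SL(n,\Z)$ by restriction.

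Next, I suppose for contradiction that $\omega$ is cohomologous to some homomorphism $\rho:\Gamma\recht H$, so that $\omega(\gamma,v)=f(\gamma v)^{-1}\rho(\gamma)f(v)$ for some measurable $f:\R^n\recht H$. The key step is to repackage this data into a measurable map $\Phi:\SL(n,\R)\recht H$ defined by $\Phi(g):=f(g\,e_1)\,s(g\,e_1)^{-1}g$, which enjoys two equivariance properties: $\Phi(\gamma g)=\rho(\gamma)\Phi(g)$ for $\gamma\in\Gamma$ (by the cocycle identity combined with cohomologousness) and $\Phi(gh)=\Phi(g)\,h$ for $h\in H$ (since $h\cdot e_1=e_1$). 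Hence $\Phi$ descends to a right-$H$-equivariant Borel map $\overline{\Phi}:\Gamma\backslash\SL(n,\R)\recht\overline{\rho(\Gamma)}\backslash H$. Pushing forward the finite Haar measure on $\Gamma\backslash\SL(n,\R)$ produces a finite $H$-invariant measure on $\overline{\rho(\Gamma)}\backslash H$, forcing $\overline{\rho(\Gamma)}$ to be a closed subgroup of finite covolume in $H$.

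The final ingredient is Margulis superrigidity applied to $\rho_0:=\pi\circ\rho:\Gamma\recht\SL(n-1,\R)$, where $\pi:H\recht\SL(n-1,\R)$ is the canonical projection. For $n\geq 3$, $\Gamma$ is a higher-rank lattice in $\SL(n,\R)$, and any continuous homomorphism $\SL(n,\R)\recht\SL(n-1,\R)$ is trivial by simplicity and dimension; thus $\overline{\rho_0(\Gamma)}$ lies in a compact subgroup $K\subset\SL(n-1,\R)$, which in turn forces $\overline{\rho(\Gamma)}\subset K\ltimes\R^{n-1}$. But $H/(K\ltimes\R^{n-1})\cong\SL(n-1,\R)/K$ carries no finite $H$-invariant measure: the $H$-action factors through $\SL(n-1,\R)$, whose unique (up to scale) invariant measure on $\SL(n-1,\R)/K$ is infinite because $\SL(n-1,\R)$ is non-compact and $K$ compact. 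Pushing $\overline{\Phi}_*\mu$ down to this further quotient delivers the contradiction.

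The step I expect to be the main obstacle is the clean passage from ``$\omega$ is cohomologous to $\rho$'' to ``$\overline{\rho(\Gamma)}$ has finite covolume in $H$'', i.e.\ the verification that $\overline{\Phi}$ really lands in a well-defined quotient by the \emph{closure} $\overline{\rho(\Gamma)}$ and that the pushforward measure is genuinely $H$-invariant; here one must handle carefully the gap between $\rho(\Gamma)$ (merely a subgroup of $H$) and its closure, as well as the measurable nature of $\Phi$. Once this finite-covolume statement is established, the Margulis step is an essentially formal compactness and dimension check.
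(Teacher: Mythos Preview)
Your argument is correct but takes a different route to the contradiction than the paper. Both construct the same section cocycle and assume it is cohomologous to a homomorphism $\rho$. The paper then cites Steinberg's elementary result that every homomorphism $\SL(n,\Z)\to\SL(n-1,\R)\ltimes\R^{n-1}$ has \emph{finite} image; on the finite-index kernel $\Gamma_0$ the adjusted section $\theta$ becomes genuinely $\Gamma_0$-equivariant, and ergodicity of the diagonal action $\Gamma_0\actson\R^n\times\R^n$ (Lemma~\ref{lemma.ergodic-T}, via Moore) forces $\theta$ to be essentially constant, contradicting $\theta(gx)=g\theta(x)$. You instead repackage the data into a bi-equivariant $\Phi:G\to H$, push the probability measure on $\Gamma\backslash G$ forward to a finite $H$-invariant measure on $\overline{\rho(\Gamma)}\backslash H$, invoke Margulis superrigidity to trap $\overline{\rho(\Gamma)}$ inside $K\ltimes\R^{n-1}$ for some compact $K$, and arrive at a finite $\SL(n-1,\R)$-invariant measure on $K\backslash\SL(n-1,\R)$, which cannot exist. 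Your proof trades an elementary algebraic fact plus Moore ergodicity for the much deeper Margulis superrigidity; the payoff is that the obstruction is phrased as non-existence of a finite invariant measure on a homogeneous quotient, a viewpoint that would transfer readily to other lattice actions on $G/H$. One remark on your self-assessment: the descent step you flag as the main obstacle is in fact routine (right-$H$-equivariance of $\Phi$ together with $G$-invariance of the measure on $\Gamma\backslash G$ is all that is needed), whereas the Margulis step you call ``essentially formal'' deserves more care, since superrigidity is usually stated for Zariski-dense unbounded homomorphisms into simple targets, and concluding compactness of $\overline{\rho_0(\Gamma)}$ for an arbitrary $\rho_0$ requires either the full representation-theoretic form or a preliminary reduction via the Zariski closure of the image.
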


Recall from \cite{Pcocycle} the following definition of $s$-malleability of a measure preserving action.

\begin{definition}
Let $\Gamma$ be a locally compact second countable (l.c.s.c.) group and $\Gamma \actson (X,\mu)$ an action preserving the finite or infinite measure $\mu$. The action is called \emph{$s$-malleable} if there exists
\begin{itemize}
\item a one-parameter group $(\al_t)_{t \in \R}$ of measure preserving transformations of $X \times X$,
\item an involutive measure preserving transformation $\be$ of $X \times X$,
\end{itemize}
such that
\begin{itemize}
\item $\al_t$ and $\be$ commute with the diagonal action $\Gamma \actson X \times X$,
\item $\al_1(x,y) \in \{y\} \times X$ for almost all $(x,y) \in X \times X$,
\item $\beta(x,y) \in \{x\} \times X$ for almost all $(x,y) \in X \times X$,
\item $\al_t \circ \beta = \beta \circ \al_{-t}$ for all $t \in \R$.
\end{itemize}
\end{definition}

Theorem 0.1 in \cite{Pcocycle} says the following. Let $\Gamma \actson (X,\mu)$ be an $s$-malleable, probability measure preserving action and $\Lambda < \Gamma$ a normal subgroup with the relative property (T) such that the restriction of $\Gamma \actson (X,\mu)$ to $\Lambda$ is weakly mixing. Then every $1$-cocycle of $\Gamma \actson (X,\mu)$ with values in a Polish group of finite type, is cohomologous to a group morphism.

Recall here that one of the equivalent formulations of weak mixing for a p.m.p.\ action $\Lambda \actson (X,\mu)$ is the ergodicity of the diagonal action $\Lambda \actson (X \times X,\mu \times \mu)$. If $\Lambda \actson (X,\mu)$ is a weakly mixing p.m.p.\ action and $\Gamma \actson (Y,\eta)$ is any ergodic p.m.p.\ action, then the diagonal action $\Gamma \actson X \times Y$ is ergodic. In particular, the diagonal action
$$\Gamma \actson \underbrace{X \times \cdots \times X}_{k \; \text{times}}$$
is ergodic for every $k$, once it is ergodic for $k = 2$. For infinite measure preserving actions, things are more complicated and, for instance, the diagonal action
$$\SL(n,\Z) \actson \underbrace{\R^n \times \cdots \times \R^n}_{k \; \text{times}}$$
is ergodic if and only if $k \leq n-1$. This partially explains the formulation of the following result.

\begin{theorem} \label{thm.cocyclesuperrigid}
Let $\Gamma \actson (X,\mu)$ be an infinite measure preserving, $s$-malleable action. Assume that
\begin{itemize}
\item the diagonal action $\Gamma \actson X \times X$ has property (T),
\item the $4$-fold diagonal action $\Gamma \actson X \times X \times X \times X$ is ergodic.
\end{itemize}
Then, $\Gamma \actson (X,\mu)$ is $\Ufin$-cocycle superrigid.
\end{theorem}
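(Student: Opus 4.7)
My plan is to follow the scheme of Popa's proof of Theorem 0.1 in \cite{Pcocycle}, adapted to the infinite-measure setting, with property~(T) and weak mixing replaced by the diagonal property~(T) and the $4$-fold ergodicity respectively. Let $w : \Gamma \times X \recht \cG \subset \cU(M)$ be a $1$-cocycle into a Polish group of finite type, realized inside the unitary group of a separable II$_1$ factor $M$ with trace $\tau$. Using the $s$-malleable flow $(\al_t)_{t \in \R}$ on $X \times X$, I would introduce, for each $t$, the $1$-cocycle
$$ w_t(g;x,y) \;=\; w\bigl(g,\,p_1(\al_t(x,y))\bigr) $$
for the diagonal action $\Gamma \actson X \times X$, with $p_1$ the first projection. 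Then $w_0(g;x,y) = w(g,x)$ and, because $\al_1(x,y) \in \{y\} \times X$, $w_1(g;x,y) = w(g,y)$. The family $t \mapsto w_t$ is strongly continuous, uniformly on compact subsets of $\Gamma$.

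I would first establish the cohomology of $w_0$ and $w_1$ as cocycles for $\Gamma \actson X \times X$. Viewing the pair $(w_0, w_t)$ as a single cocycle into $\cU(M \ovt M)$ (so that a cohomology between $w_0$ and $w_t$ becomes an invariant unit section), continuity of the flow makes the constant section $1 \ot 1$ almost invariant, uniformly on compact sets of $\Gamma$, once $t$ is near $0$. Property~(T) of $\Gamma \actson X \times X$ (in the operator-algebraic sense set up in Section~3) then produces an invariant unit section, and the polar-decomposition argument used in the proof of Proposition~\ref{prop.Tpmp} extracts a measurable $\phi_t : X \times X \recht \cG$ with $w(g,x)\phi_t(x,y) = \phi_t(gx,gy)\, w_t(g;x,y)$. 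A standard open-closed argument (the set of such $t$ is a closed subgroup of $\R$ containing a neighborhood of $0$) extends this to every $t$; taking $t=1$ yields $\phi : X \times X \recht \cG$ with
$$ w(g,x)\, \phi(x,y) \;=\; \phi(gx,gy)\, w(g,y) \qquad \forall g \in \Gamma,\;\text{a.e. } (x,y). $$

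I would then combine this $X\times X$-cohomology with the flip $\be$ and the $4$-fold ergodicity to extract a group morphism. From $\al_t \be = \be \al_{-t}$ and $\be(x,y) = (x,\be_1(x,y))$ one reads off $p_1(\al_{-1}(x,y)) = \be_1(x,y)$, so that $\phi \circ \be$ is a second cohomology relating $w(g,x)$ to $w(g,\be_1(x,y))$. Playing these two identities against each other and forming, e.g.,
$$ F(x_1,x_2,x_3,x_4) \;=\; \phi(x_1,x_3)^{-1} \phi(x_1,x_4) \phi(x_2,x_4)^{-1} \phi(x_2,x_3), $$
one obtains a function on $X^4$ whose $\Gamma$-variation is by inner conjugation by values of $w$; a center-valued analysis, together with ergodicity of $\Gamma \actson X \times X \times X \times X$, forces $F \equiv 1_\cG$. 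This yields a factorization $\phi(x,y) = f(x)^{-1} f(y)$ for some Borel $f : X \recht \cG$, and substitution back into the cohomology formula shows that $g \mapsto f(gx)\, w(g,x)\, f(x)^{-1}$ is a.s.\ independent of $x$ and equal to a continuous morphism $\pi : \Gamma \recht \cG$; thus $w$ is cohomologous to $\pi$.

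The main obstacle is the final untwisting. In the pmp setting of \cite{Pcocycle}, weak mixing automatically delivers ergodicity of all higher diagonal powers and closes the argument cleanly; here only $4$-fold ergodicity is assumed, and the flip $\be$ (not merely the flow) must be used to link $\phi(x,y)$ with $\phi(y,x)$. Verifying that the $X^4$-function $F$ above is genuinely constant, rather than only equivariant up to inner conjugation by cocycle values, is the technical heart of the proof and requires using both hypotheses of the theorem in tandem.
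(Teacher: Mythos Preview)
Your overall scheme matches the paper's: use property~(T) of the diagonal action and the malleable flow to produce an intertwiner $\phi$ on $X\times X$, then use higher ergodicity to factor $\phi$ and extract a morphism. Two steps, however, do not work as written.

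First, polar decomposition of the invariant section lands you in $\cU(M)$, not in $\cG$: property~(T) produces an invariant vector in a Hilbert $W^*$-module over $\rL^\infty(X\times X)\ovt M$, whose polar part is a partial isometry with entries in $M$ (and only \emph{partial}---a separate maximality argument is needed to reach a unitary). The paper therefore carries $\phi:X\times X\to\cU(M)$ throughout, obtains $\delta:\Gamma\to\cU(M)$ and $\psi:X\to\cU(M)$ with $\om(g,x)=\psi(gx)^*\delta(g)\psi(x)$, and only then observes that $x\mapsto\psi(x)\cG\in\cU(M)/\cG$ is equivariant and hence (by Lemma~\ref{lemma.tata} and $2$-fold ergodicity) constant; this is the step that brings everything back into $\cG$. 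You omit it entirely.

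Second, your four-variable function $F$ does not yield $F\equiv 1$ from $4$-fold ergodicity alone. Its transformation law is $F(g\cdot)=w(g,x_3)\,F(\cdot)\,w(g,x_3)^{-1}$, conjugation by a cocycle in the variable $x_3$; applying Lemma~\ref{lemma.tata} to kill the dependence on $(x_1,x_2,x_4)$ would require ergodicity of $\Gamma\actson X\times X^3\times X^3=X^7$, which is not assumed (and fails, e.g., for $\SL(n,\Z)\actson\R^n$ with $n\leq 7$). No center-valued trick repairs this, nor does the involution $\be$---in the paper $\be$ is used in the \emph{first} step, to pass from a cohomology at time $t_0$ to one at $2t_0$ while controlling the supports of the partial isometry. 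The correct object is the three-variable function $H(x,y,z)=\phi(x,y)\phi(y,z)$: it transforms as $H(g\cdot)=w(g,x)\,H(\cdot)\,w(g,z)^{-1}$, so Lemma~\ref{lemma.tata} with base $X^2$ (variables $x,z$) and fibre $X$ (variable $y$) requires precisely that $\Gamma$-invariants on $X^4$ lie in $\rL^\infty(X^2)^\Gamma\ot 1\ot 1$, which is $4$-fold ergodicity. This gives $H$ independent of $y$, and the factorization follows immediately by fixing $y$.
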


The proof of Theorem \ref{thm.cocyclesuperrigid} follows entirely the setup of the proof of \cite[Theorem 0.1]{Pcocycle}. But, one has to be careful at those places in \cite{Pcocycle} where weak mixing is applied. To proper way to deal with these issues, lies in the following lemma distilled from the proof of \cite[Lemmas 3.1]{furman-on-popa}.

\begin{lemma} \label{lemma.tata}
Let $(Z,d)$ be a Polish space with separable complete metric $d$ and $(\al_g)_{g \in \cG}$ a continuous action of a Polish group $\cG$ by homeomorphisms of $Z$. Assume that $d$ is $(\al_g)_{g \in \cG}$-invariant.

Let $\Gamma$ be a l.c.s.c.\ group and $\Gamma \actson (X,\mu)$, $\Gamma \actson (Y,\eta)$ non-singular actions. Let $F : X \times Y \recht Z$ be a measurable map satisfying
$$F(g \cdot x,g \cdot y) = \al_{\om(g,x)}(F(x,y))$$
for almost all $(x,y) \in X \times Y$, $g \in \Gamma$, where $\om : \Gamma \times X \recht \cG$ is some measurable map.

Consider the diagonal action $\Gamma \actson X \times Y \times Y$ and assume that $\rL^\infty(X \times Y \times Y)^\Gamma = \rL^\infty(X)^\Gamma \ot 1 \ot 1$ (which holds in particular if the diagonal action $\Gamma \actson X \times Y \times Y$ is ergodic). Then, there exists a measurable $H : X \recht Z$ with $F(x,y) = H(x)$ for almost all $(x,y) \in X \times Y$.
\end{lemma}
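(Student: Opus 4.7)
The plan is to exploit the $(\al_g)$-invariance of the metric $d$ in order to turn $F$ into a $\Gamma$-invariant scalar function on $X \times Y \times Y$, apply the ergodicity hypothesis to reduce this function to one depending only on $x$, and then use the separability of $Z$ to rule out any nontrivial dependence of $F(x,y)$ on $y$.

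After replacing $d$ by $\min(d,1)$, which is still a separable complete $(\al_g)$-invariant metric inducing the same topology on $Z$, I would define
$$\phi : X \times Y \times Y \recht [0,1], \qquad \phi(x,y,y') = d(F(x,y), F(x,y')).$$
Combining the hypothesis $F(g \cdot x, g \cdot y) = \al_{\om(g,x)}(F(x,y))$ with the fact that each $\al_g$ is a $d$-isometry yields
$$\phi(g \cdot x, g \cdot y, g \cdot y') = d\bigl(\al_{\om(g,x)}(F(x,y)), \al_{\om(g,x)}(F(x,y'))\bigr) = \phi(x,y,y'),$$
so $\phi$ is a bounded $\Gamma$-invariant measurable function. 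The assumption $\rL^\infty(X \times Y \times Y)^\Gamma = \rL^\infty(X)^\Gamma \ot 1 \ot 1$ then produces a measurable $\psi : X \recht [0,1]$ with $d(F(x,y), F(x,y')) = \psi(x)$ for almost every $(x,y,y')$.

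The crux of the argument is showing that $\psi = 0$ almost everywhere. Replacing $\eta$ by an equivalent probability measure $\eta'$, for a.e.\ $x$ let $\nu_x$ denote the pushforward of $\eta'$ under $F(x,\cdot)$. For any two distinct points $z_1, z_2 \in \operatorname{supp}(\nu_x)$ and any $\eps > 0$, the set $\{(y,y') : F(x,y) \in B(z_1,\eps),\; F(x,y') \in B(z_2,\eps)\}$ has positive $\eta' \times \eta'$-measure, so it must intersect the conull set $\{(y,y') : d(F(x,y),F(x,y')) = \psi(x)\}$; letting $\eps \recht 0$ then forces $d(z_1,z_2) = \psi(x)$. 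If $\psi(x) > 0$, then $\operatorname{supp}(\nu_x)$ is a $\psi(x)$-separated subset of the separable metric space $Z$, hence countable, and $\nu_x$ becomes a countable sum of point masses $\sum_n c_n \delta_{z_n}$ with $\sum_n c_n = 1$. This forces $(\nu_x \times \nu_x)(\Delta_Z) = \sum_n c_n^2 > 0$, which translates back to $(\eta' \times \eta')(\{(y,y') : F(x,y) = F(x,y')\}) > 0$. But this contradicts $d(F(x,y), F(x,y')) = \psi(x) > 0$ holding almost everywhere, so $\psi(x) = 0$ for a.e.\ $x$.

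Finally, $\psi = 0$ a.e.\ means $F(x,y) = F(x,y')$ for almost every $(x,y,y')$. Fubini yields a conull set of $y_0 \in Y$ for which the slice $x \mapsto F(x,y_0)$ is a well-defined measurable map $X \recht Z$; setting $H(x) := F(x,y_0)$ gives $F(x,y) = H(x)$ for almost every $(x,y)$, as required. The main obstacle is the middle step: converting the \emph{almost everywhere} equidistance $d(F(x,y),F(x,y')) = \psi(x)$ into \emph{pointwise} equidistance on $\operatorname{supp}(\nu_x)$ is exactly where the separability of $(Z,d)$ enters in an essential way, since without it one could not conclude that the support is discrete and hence countable.
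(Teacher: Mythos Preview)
Your proof is correct and follows the same overall strategy as the paper: form the $\Gamma$-invariant distance function $\phi(x,y,y') = d(F(x,y),F(x,y'))$, reduce it to a function $\psi(x)$ via the hypothesis on $\rL^\infty(X\times Y\times Y)^\Gamma$, show $\psi=0$ a.e., and then extract $H$ by Fubini. The only difference lies in how you establish $\psi=0$. The paper gives a shorter argument: assuming $\psi(x)\geq\delta$ on a non-negligible set $\cU$, cover $Z$ by countably many balls $B_n$ of diameter strictly less than $\delta$; for a.e.\ $x\in\cU$ each preimage $F_x^{-1}(B_n)$ must be $\eta$-negligible (two points $y,y'$ in the same preimage would give $d(F(x,y),F(x,y'))<\delta$, contradicting $\psi(x)\geq\delta$ a.e.), yet the $B_n$ cover $Z$ --- contradiction. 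Your route through pushforward measures, equidistant supports, and the diagonal mass $\sum_n c_n^2>0$ is valid but is a detour: the covering argument uses separability just as essentially (to obtain a \emph{countable} cover by small balls) while avoiding any discussion of supports or atomicity.
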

\begin{proof}
Define the map $G : X \times Y \times Y \recht \R : G(x,y,z) = d(F(x,y),F(x,z))$. Since $d$ is $(\al_g)_{g \in \cG}$-invariant, the map $G$ is invariant under the diagonal $\Gamma$-action. By our assumption, $G(x,y,z) = G_0(x)$ for almost all $(x,y,z)$ and some measurable map $G_0 : X \recht \R_+$. We claim that $G_0(x) = 0$ for almost all $x \in X$. Let $\delta > 0$ and assume that $G_0(x) \geq \delta$ for all $x$ in a non-negligible subset $\cU$ of $X$. Cover $Z$ by a sequence $(B_n)_{n \in \N}$ of balls of diameter strictly smaller than $\delta$. Write, for every $x \in X$, $F_x : y \mapsto F(x,y)$. By the Fubini theorem, for almost every $x \in \cU$, we have $G(x,y,z) \geq \delta$ for almost all $(y,z) \in Y \times Y$. Hence, for almost every $x \in \cU$, we have that $F_x^{-1}(B_n)$ is negligible for every $n$, which is absurd.

So, $G(x,y,z) = 0$ for almost all $(x,y,z) \in X \times Y \times Y$. Again by the Fubini theorem, take $z \in Y$ such that $d(F(x,y),F(x,z)) = 0$ for almost all $(x,y) \in X \times Y$. Putting $H(x) := F(x,z)$, we are done.
\end{proof}

\begin{proof}[Proof of Theorem \ref{thm.cocyclesuperrigid}]
Let $N$ be a II$_1$ factor and $\cG \subset \cU(N)$ a closed subgroup. Let $\om : \Gamma \times X \recht \cG$ be a $1$-cocycle, meaning that for all $g,h \in \Gamma$, we have
$$\om(gh,x) = \om(g, h \cdot x) \, \om(h,x) \quad\text{for almost all}\;\; x \in X \; .$$
Define the following $1$-cocycles for the diagonal action $\Gamma \actson X \times X$.
$$\om_0 : \Gamma \times X \times X \recht \cG : \om_0(g,x,y) = \om(g,x) \quad\text{and}\quad
\om_t : \Gamma \times X \times X \recht \cG : \om_t(g,x,y) = \om_0(g,\al_t(x,y)) \; .$$

Define the action $(\rho_g)_{g \in \Gamma}$ of $\Gamma$ by automorphisms of $\rL^\infty(X) \ovt N$ by the formula
$$(\rho_{g^{-1}}(F))(x) = \om(g,x)^* F(g \cdot x) \om(g,x) \; .$$
Denote by $B$ the von Neumann subalgebra of $(\rho_g)_{g \in \Gamma}$-fixed points.

{\bf Claim.} Whenever $p$ is a non-zero projection in $B$, there exists a measurable function $\vphi : X \times X \recht N$ and a non-zero projection $q \in B$ such that $q \leq p$ and such that for all $g \in \Gamma$, we have
$$\om(g,x) \vphi(x,y) = \vphi(g \cdot x , g \cdot y) \om(g,y) \quad , \quad \vphi(x,y) \vphi(x,y)^* = q(x) \quad , \quad \vphi(x,y)^* \vphi(x,y) = q(y)$$
for almost all $(x,y) \in X \times X$.

{\bf Proof of the claim.} Since $p$ is $(\rho_g)_{g \in \Gamma}$-invariant, the function $x \mapsto \tau(p(x))$ is $\Gamma$-invariant and hence constantly equal to $0 < \lambda \leq 1$. Let $p_0 \in P$ be a projection with $\tau(p_0) = \lambda$. It follows that, inside $\rL^\infty(X) \ovt N$, the projections $p$ and $1 \ot p_0$ are equivalent. Take a partial isometry $v \in \rL^\infty(X) \ovt N$ such that $v^* v = p$ and $vv^* = 1 \ot p_0$. Define $\eta(g,x) = v(g \cdot x) \om(g,x) v(x)^*$ and note that $\eta$ is a $1$-cocycle for $\Gamma \actson X$ with values in $\cU(p_0 N p_0)$. Set $\eta_0(g,x,y) = \eta(g,x)$ and, for all $n \geq 1$, $\eta_n(g,x,y) = \eta_0(g,\al_{2^{-n}}(x,y))$.

Define the Hilbert space $K = \bigoplus_{k=1}^\infty p_0 \rL^2(N) p_0$. We define the following $1$-cocycle of $\Gamma \actson X \times X$ with values in the unitary group $\cU(K)$ of $K$.
$$(c(g,x,y) \xi)_k = \eta(g,x) \, \xi_k \, \eta_k(g,x,y)^* \; .$$
Define the map $\xi_n : X \recht (K)_1$ by the formula
$$(\xi_n(x))_k = \begin{cases} \tau(p_0)^{-1/2} p_0 &\quad\text{if}\;\; k = n \; , \\ 0 &\quad\text{if}\;\; k \neq n \; .\end{cases}$$
One checks that $\xi_n$ is a sequence of almost invariant unit vectors. Since $\Gamma \actson X \times X$ has property (T), we find a unit invariant vector, i.e.\ a measurable map $\xi : X \times X \recht K$ with $\|\xi(x,y)\| = 1$ for almost all $(x,y)$ and, for all $g \in \Gamma$,
$$\xi(g \cdot x, g \cdot y) = c(g,x,y)  \, \xi(x,y)$$
almost everywhere. It follows that
$$\xi(g \cdot x,g \cdot y)_k = \eta(g,x)  \, \xi(x,y)_k  \, \eta_k(g,x,y)^*$$
almost everywhere. In particular, for every $k$, the function $(x,y) \mapsto \|\xi(x,y)_k\|$ is $\Gamma$-invariant and hence, constant. Since $\|\xi(x,y)\|=1$ for almost all $(x,y)$, we can pick $k$ such that $\|\xi(x,y)_k\| \neq 0$ for almost all $(x,y)$. Taking the polar decomposition of $\xi(x,y)_k$, we find a non-zero partial isometry $\psi \in \rL^\infty(X \times X) \ovt p_0 N p_0$ satisfying
$$\psi(g \cdot x,g \cdot y)  \, \eta_k(g,x,y) = \eta(g,x)  \, \psi(x,y)$$
almost everywhere. We set $\vphi_0(x,y) := v(x)^* \psi(x,y) v_k(x,y)$, where $v_0(x,y) = v(x)$ and $v_k(x,y) = v(\al_{2^{-k}}(x,y))$. It follows that
$$\om(g,x) \,  \vphi_0(x,y) = \vphi_0(g \cdot x,g \cdot y)  \, \om_{t_0}(g,x,y)$$
where $t_0 = 2^{-k}$.

Set $r(x,y) = \vphi_0(x,y) \vphi_0(x,y)^*$. It follows that
$$r(g \cdot x,g \cdot y) = \om(g,x)  \, r(x,y)  \, \om(g,x)^*$$
almost everywhere. By Lemma \ref{lemma.tata}, we find a projection $q \in \rL^\infty(X) \ovt N$ such that $r(x,y) = q(x)$ almost everywhere. Then, $q$ is a non-zero projection in $B$ and $q \leq p$. Also, $\vphi_0(x,y) \vphi_0(x,y)^* = q(x)$ almost everywhere.

Set $q_0(x,y) = q(x)$ and $q_t(x,y) = q(\al_t(x,y))$.
We now construct $\vphi_1 : X \times X \recht N$ such that $\vphi_1(x,y) \vphi_1(x,y)^* = q(x)$, $\vphi_1(x,y)^* \vphi_1(x,y) = q_{2t_0}(x,y)$ and
$$\om(g,x) \,  \vphi_1(x,y) = \vphi_1(g \cdot x, g \cdot y)  \, \om_{2t_0}(g,x,y)$$
almost everywhere. Continuing the same procedure $k$ times (remember that $t_0 = 2^{-k}$), we will have found $\vphi = \vphi_k : X \times X \recht N$ satisfying $\vphi(x,y) \vphi(x,y)^* = q(x)$, $\vphi(x,y)^* \vphi(x,y) = q_1(x,y) = q(y)$ and
$$\om(g,x)  \, \vphi(x,y) = \vphi(g \cdot x , g \cdot y)  \, \om_1(g,x,y) = \vphi(g \cdot x , g \cdot y)  \, \om(g,y) \; ,$$
hence proving the claim. In fact, it suffices to take $$\vphi_1(x,y) = \vphi_0(x,y) \vphi_0(\beta(\al_{2t_0}(x,y)))^*$$ and to use that $(\al_t)_{t \in \R}$ is a one-parameter group, $\beta \circ \al_t = \al_{-t} \circ \beta$ and $\beta(x,y) \in \{x\} \times Y$ for almost all $(x,y)$. So, the claim above has been proven.

Using the claim and a maximality argument, we find a measurable function $\vphi : X \times X \recht \cU(N)$ such that
$$\om(g,x)  \, \vphi(x,y) = \vphi(g \cdot x,g \cdot y)  \, \om(g,y)$$
almost everywhere. Set $H(x,y,z) = \vphi(x,y) \vphi(y,z)$. It follows that
$$H(g \cdot x,g \cdot y,g \cdot z) = \om(g,x)  \, H(x,y,z)  \, \om(g,z)^*$$
for almost all $(x,y,z)$. By Lemma \ref{lemma.tata} and because the $4$-fold diagonal action $\Gamma \actson X \times X \times X \times X$ is ergodic, $H$ is essentially independent of its second variable. So, we find a measurable $F : X \times X \recht \cU(N)$ such that $\vphi(x,y) = F(x,z) \vphi(y,z)^*$ for almost every $(x,y,z)$. By the Fubini theorem, take $z \in X$ such that the previous formula holds for almost all $(x,y)$. Set $\psi(x) = F(x,z)^*$ and $G(y) = \vphi(y,z)^*$. It follows that
$$\psi(g \cdot x)  \, \om(g,x) \,  \psi(x)^* = G(g \cdot y)  \, \om(g,y)  \, G(y)^*$$
almost everywhere. Hence, the left-hand side is independent of $x$ and we have found a group morphism $\delta : G \recht \cU(N)$ and a measurable map $\psi : X \recht \cU(N)$ such that
$$\om(g,x) = \psi(g \cdot x)^* \,  \delta(g) \,  \psi(x)$$
almost everywhere.

Consider the quotient Polish space $\cU(N)/\cG$ with the induced metric, which is invariant under left multiplication by elements of $\cU(N)$.
Write $F : X \recht \cU(N)/\cG : F(x) = \psi(x) \cG$. It follows that $F(g \cdot x) = \delta(g) F(x)$ almost everywhere. By Lemma \ref{lemma.tata}, $F$ is essentially constant. So, we find a unitary $u \in \cU(N)$ and a measurable map $w : X \recht \cG$ such that $\psi(x) = u w(x)$ almost everywhere. Replacing $\delta$ by $u^* \delta(\cdot) u$, it follows that $\om(g,x) = w(g \cdot x)^* \delta(g) w(x)$ almost everywhere. In particular, $\delta(g) \in \cG$ and we are done.
\end{proof}

As a principle (cf.\ \cite[Proposition 3.6]{Pcocycle}), once the restriction of a $1$-cocycle $\om : G \times X \recht \cG$ to a closed subgroup $H < G$, is cohomologous to a group morphism $H \recht \cG$ and if $H$ is sufficiently normal in $G$ and acts sufficiently mixingly on $X$, the entire $1$-cocycle $\om$ is cohomologous to a group morphism $G \recht \cG$. In our setting, we need the following.

\begin{lemma}\label{lemma.extend}
Let $G \actson (X,\mu)$ be a non-singular action of the l.c.s.c.\ group $G$ and $\om : G \times X \recht \cG$ a $1$-cocycle with values in the Polish group of finite type $\cG$. Let $H < G$ be a closed subgroup and assume that $\om|_H$ is cohomologous to a group morphism $H \recht \cG$. If for every $g \in G$, the diagonal action of the group $H \cap g H g^{-1}$ on $X \times X$ is ergodic, then $\om$ is cohomologous to a morphism $G \recht \cG$.
\end{lemma}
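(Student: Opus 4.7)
The plan is to first use the hypothesis to normalize $\om$ and then show that the normalized cocycle is literally constant in the $X$-variable for every $g \in G$. By assumption, after replacing $\om$ by a cohomologous $1$-cocycle, I may assume that $\om(h,x) = \delta(h)$ for almost every $x$ and every $h \in H$, where $\delta : H \recht \cG$ is a continuous group morphism. The goal then becomes: for every $g \in G$, the map $x \mapsto \om(g,x)$ is essentially constant. Once this is established, setting $\delta'(g)$ equal to this constant value produces a measurable extension of $\delta$ to $G$; the cocycle identity instantly makes $\delta'$ a group morphism, and measurability upgrades to continuity by the standard fact that measurable homomorphisms between Polish groups are continuous.

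Fix $g \in G$ and $h' \in H \cap g^{-1} H g$, and set $h := g h' g^{-1} \in H$. Applying the cocycle identity to the two decompositions of $hg = gh'$, together with the normalization $\om|_H = \delta$, yields
\begin{equation*}
\om(g, h' \cdot x) = \delta(h) \, \om(g, x) \, \delta(h')^{-1}
\end{equation*}
for almost every $x$. Since $\cG$ is realized as a closed subgroup of $\cU(N)$ for some II$_1$ factor $N$, it carries the bi-invariant complete metric $d(u,v) := \|u-v\|_2$. Hence the measurable function
\begin{equation*}
\Phi_g : X \times X \recht \Rp \;,\qquad \Phi_g(x,y) := d(\om(g,x), \om(g,y)),
\end{equation*}
is invariant under the diagonal action of $H \cap g^{-1} H g$ on $X \times X$. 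Applying the ergodicity hypothesis with $g$ replaced by $g^{-1}$, this action is ergodic, so $\Phi_g$ is almost everywhere equal to a constant $c_g \geq 0$.

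The crucial step is then to argue $c_g = 0$. Let $K_g \subset \cG$ denote the essential range of $x \mapsto \om(g,x)$. By Fubini's theorem one can pick $a \in K_g$ with $d(a, \om(g,y)) = c_g$ for almost every $y$, so $K_g$ is contained in the sphere of radius $c_g$ around $a$; but $a \in K_g$ itself, forcing $c_g = 0$. Thus $\om(g,\cdot)$ is essentially constant, and the proof concludes as explained.

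The main subtlety that I anticipate is careful null-set bookkeeping when combining the three cocycle identities to derive the invariance of $\Phi_g$, and making precise the ``essential range'' argument for $c_g = 0$. The conceptual heart of the proof is minimal: everything hinges on the availability of a bi-invariant metric on $\cG$, which is precisely what the Polish-group-of-finite-type hypothesis provides via the embedding into a tracial von Neumann algebra.
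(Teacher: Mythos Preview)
Your argument is correct and follows essentially the same route as the paper: normalize so that $\om|_H = \delta$, derive the equivariance $\om(g,h'\cdot x) = \delta(gh'g^{-1})\,\om(g,x)\,\delta(h')^{-1}$ for $h' \in H \cap g^{-1}Hg$, and use the bi-invariant metric on $\cG$ together with ergodicity of the diagonal action to conclude that $x \mapsto \om(g,x)$ is essentially constant. The only cosmetic difference is that the paper isolates the ``invariant distance function is constant, hence zero'' step as a separate lemma (Lemma~\ref{lemma.tata}), whereas you carry it out inline via the essential-range argument.
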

\begin{proof}
We may assume that for every $h \in H$, we have $\om(h,x) = \delta(h)$ for almost every $x \in X$, where $\delta : H \recht \cG$ is a continuous group morphism. Let $g \in G$ and put $F(x) = \om(g,x)$. Using the cocycle equation, it follows that for all $h \in H \cap g^{-1} H g$, we have $F(h \cdot x) = \delta(ghg^{-1}) F(x) \delta(h)^{-1}$ almost everywhere. By Lemma \ref{lemma.tata}, $F$ is essentially constant. So, we have shown that for every $g \in G$, the map $x \mapsto \om(g,x)$ is essentially constant. It follows that $\om(g,x) = \delta(g)$.
\end{proof}

After showing the following lemma, we can prove Theorem \ref{thm.cocycle-superrigid-concrete} and Proposition \ref{prop.notuntwist}.

\begin{lemma} \label{lemma.ergodic-T}
Let $\Gamma$ be a lattice in $\SL(n,\R)$ and consider the linear action of $\Gamma$ on $\R^n$. Let
$$\Gamma \actson X^{(k)} := \underbrace{\R^n \times \cdots \times \R^n}_{\text{$k$ times}}$$
be the $k$-fold diagonal action. Then, $\Gamma \actson X^{(k)}$
\begin{itemize}
\item is ergodic if and only if $k \leq n-1$,
\item has property (T) if and only if $k \leq n-3$ or $k \geq n$ (the latter part being as interesting as the trivial group $\{e\}$ having property (T)).
\end{itemize}
\end{lemma}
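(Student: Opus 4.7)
I identify $X^{(k)}$ with the space $\M_{n,k}(\R)$ of $n\times k$ real matrices, viewing each $k$-tuple of vectors as the columns of a matrix; the diagonal $\Gamma$-action then becomes left multiplication. The argument splits naturally at $k=n$: for $k\leq n-1$ the $\SL(n,\R)$-action is homogeneous on a conull set, so Moore's ergodicity theorem together with Propositions~\ref{prop.leftright} and \ref{prop.Tpmp} reduces everything to properties of the point stabilizer; for $k\geq n$ the action is highly non-ergodic, and I will bypass this by enlarging the ambient group so that $\M_{n,k}(\R)$ itself becomes a homogeneous space of that larger group.

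For $k\leq n-1$, the open conull subset of rank-$k$ matrices is a single $\SL(n,\R)$-orbit; a direct block computation shows that the stabilizer of $\binom{I_k}{0}$ is $H\cong\R^{k(n-k)}\rtimes\SL(n-k,\R)$, which is non-compact. Moore's ergodicity theorem then gives ergodicity of $\Gamma\actson\SL(n,\R)/H\cong X^{(k)}$. For property~(T), Proposition~\ref{prop.leftright} applied with $\cG=\SL(n,\R)$, $H_1=\Gamma$, $H_2=H$ equates property~(T) of $\Gamma\actson\rL^\infty(X^{(k)})$ with property~(T) of $H\actson\rL^\infty(\SL(n,\R)/\Gamma)$; since $\Gamma$ is a lattice, the latter preserves the normalized Haar probability measure on $\SL(n,\R)/\Gamma$, and Proposition~\ref{prop.Tpmp} reduces the question further to property~(T) of the group $H$ itself. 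Kazhdan's theorem gives property~(T) of $\SL(n-k,\R)$ iff $n-k\geq 3$, and the classical relative property~(T) of the pair $(\SL(m,\R)\ltimes\R^m,\R^m)$ for $m\geq 2$, applied to each of the $k$ copies of $\R^{n-k}$ forming the unipotent radical of $H$, upgrades this to property~(T) of $H$. Thus $\Gamma\actson X^{(k)}$ has property~(T) iff $k\leq n-3$.

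For $k\geq n$, the map $\phi\colon(A_1\mid A_2)\mapsto(A_1,A_1^{-1}A_2)$, defined on the conull open subset where the first $n$ columns $A_1$ form an invertible block, is a $\Gamma$-equivariant diffeomorphism onto $\GL(n,\R)\times\M_{n,k-n}(\R)$, where $\Gamma$ acts by left multiplication on the first factor and trivially on the second. Non-ergodicity is immediate: any $n\times n$ minor of $A\in\M_{n,k}(\R)$ is $\SL(n,\R)$-, hence $\Gamma$-invariant and non-constant. For property~(T), I apply Proposition~\ref{prop.leftright} to $\cG=\GL(n,\R)\times\M_{n,k-n}(\R)$ (viewing the second factor as the additive abelian group), with $H_1=\Gamma\times\{0\}$ and $H_2=\{(e,0)\}$ trivial: then $\cG/H_2=\cG$ and the $H_1$-action on $\rL^\infty(\cG)$ coincides (via $\phi$) with the action under study, while $H_2\actson\rL^\infty(\cG/H_1)$ is the trivial group acting, which vacuously has property~(T). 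Hence property~(T) holds for $k\geq n$.

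The main obstacle will be the $k\geq n$ case, because the $\SL(n,\R)$-action is no longer transitive on a conull subset, so Proposition~\ref{prop.leftright} cannot be applied to a single $\SL(n,\R)$-orbit as it was for $k\leq n-1$. The decisive trick is to enlarge the ambient group by the abelian translation group $\M_{n,k-n}(\R)$; after the change of coordinates $\phi$, this absorbs the non-homogeneity of $\M_{n,k}(\R)$ into a trivial direction and restores the homogeneous-space setting in which Proposition~\ref{prop.leftright} produces property~(T) directly from the trivial property~(T) of the trivial group.
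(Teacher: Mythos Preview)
Your proof is correct. For $k\leq n-1$ your argument is essentially identical to the paper's: identify $X^{(k)}$, up to a null set, with $\SL(n,\R)/H$ for $H$ the stabilizer of $(e_1,\ldots,e_k)$, invoke Moore's theorem for ergodicity, and then use Propositions~\ref{prop.leftright} and~\ref{prop.Tpmp} to reduce property~(T) of the action to property~(T) of $H\cong\SL(n-k,\R)\ltimes\R^{k(n-k)}$, which holds iff $n-k\geq 3$.

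For $k\geq n$ you and the paper agree on non-ergodicity (via determinant-type invariants), but diverge on property~(T). The paper simply observes that $\Gamma\actson X^{(k)}$ is essentially free and proper---which is immediate from your own change of coordinates $\phi$, since $\Gamma$ acts freely and properly on the $\GL(n,\R)$ factor---and then Lemma~\ref{lemma.induced} with $H=G=\Gamma$ reduces property~(T) of $\Gamma\actson\rL^\infty(X^{(k)})$ to property~(T) of the trivial group acting on $\rL^\infty(X^{(k)})^\Gamma$, which is vacuous. Your route, embedding $\Gamma$ into the auxiliary group $\cG=\GL(n,\R)\times\M_{n,k-n}(\R)$ and applying Proposition~\ref{prop.leftright} with $H_2=\{e\}$, is valid and lands in the same place (Proposition~\ref{prop.leftright} is itself proved via Lemma~\ref{lemma.induced}), but your framing of this case as ``the main obstacle'' requiring a ``decisive trick'' overstates its difficulty: once the action is recognized as free and proper, no enlargement of the ambient group is needed.
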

\begin{proof}
Writing the elements of $\R^n$ as column vectors, identify, up to measure zero, $X^{(n)}$ with $\GL(n,\R)$, with the $\Gamma$-action given by left multiplication. The determinant function is invariant and not essentially constant, proving that $\Gamma \actson X^{(k)}$ is non-ergodic for $k \geq n$. It also follows that for $k \geq n$, the action $\Gamma \actson X^{(k)}$ is essentially free and proper. Hence, it has property (T) because the trivial group has property (T).

Let now $k \leq n-1$. Denoting by $(e_i)_{i=1,\ldots,n}$ the standard basis vectors in $\R^n$, the orbit of $(e_1,\ldots,e_k) \in X^{(k)}$ under the diagonal $\SL(n,\R)$-action has complement of measure zero, so that we can identify $\Gamma \actson X^{(k)}$ with $\Gamma \actson \SL(n,\R)/H$, where $H = \{A \in \SL(n,\R) \mid Ae_i = e_i \;\;\text{for all}\;\; i=1,\ldots,k\}$. Observe that
$H \cong \SL(n-k,\R) \ltimes \M_{n-k}(\R)$, where $\M_{n-k}(\R)$ denotes the additive group of $(n-k)\times(n-k)$ matrices on which $\SL(n-k,\R)$ acts by multiplication. By Moore's ergodicity theorem (see e.g.\ \cite[Theorem 2.2.6]{Z2}), $H \actson \Gamma \backslash \SL(n,\R)$ is ergodic and hence, $\Gamma \actson \SL(n,\R)/H$ is ergodic.

By Proposition \ref{prop.leftright}, property (T) of $\Gamma \actson \SL(n,\R) / H$ is equivalent with property (T) of $H \actson
\Gamma \backslash \SL(n,\R)$. By Proposition \ref{prop.Tpmp}, the latter is equivalent with the group $H$ having property (T), which is in turn equivalent with $n-k \geq 3$.
\end{proof}

\begin{proof}[Proof of Theorem \ref{thm.cocycle-superrigid-concrete}]
Observe that part 1 is a special case of part 3, by taking $k=1$ and $H = \{1\}$. We start by proving part 3. Set $X = \M_{n,k}(\R)$. The action $\Gamma \actson X$ by left multiplication is $s$-malleable. It suffices to take
$$\al_t(A,B) = (\cos(\pi t/2) A + \sin(\pi t/2)A,-\sin(\pi t/2) A + \cos (\pi t/2) B) \quad\text{and}\quad \beta(A,B) = (A,-B)$$
whenever $t \in \R$ and $A,B \in \M_{n,k}(\R)$.

Moreover, $\Gamma \actson X$ can be viewed as the $k$-fold diagonal action $\Gamma \actson \R^n \times \cdots \times \R^n$.
By Lemma \ref{lemma.ergodic-T} and because $n \geq 4k+1$, the diagonal action $\Gamma \actson X \times X$ has property (T) and the $4$-fold diagonal action $\Gamma \actson X \times X \times X \times X$ is ergodic. So, by Theorem \ref{thm.cocyclesuperrigid}, $\Gamma \actson X$ is $\Ufin$-cocycle superrigid. Since the diagonal action $\Gamma \actson X \times X$ is ergodic and $\Gamma$ is a normal subgroup of $G$, Lemma \ref{lemma.extend} implies that $G \actson X$ is $\Ufin$-cocycle superrigid.

It remains to prove part 2 of the theorem. By part 1, we already know $\Gamma \actson \R^n$ is $\Ufin$-cocycle superrigid. Define, for every $x \in \Z^n$, $\Gamma_x = \{g \in \Gamma \mid g x = x \}$. We claim that the diagonal action $\Gamma_x \actson \R^n \times \R^n$ is ergodic for every $x \in \Z^n$. Once this claim is proven, Lemma \ref{lemma.extend} implies that $\Gamma \ltimes \Z^n \actson \R^n$ is $\Ufin$-cocycle superrigid.

For $x = 0$, the claim follows from Lemma \ref{lemma.ergodic-T}. Let now $x \neq 0$. Define the closed subgroup $H = \{g \in \SL(n,\R) \mid g e_1 = e_1 \}$ of $\SL(n,\R)$. Exactly as in the proof of Lemma \ref{lemma.ergodic-T}, when $n \geq 4$, the diagonal action $\Lambda \actson \R^n \times \R^n$ of any lattice $\Lambda \subset H$ is ergodic. Take $g_0 \in \SL(n,\Q)$ with $x = g_0 e_1$. Since $\Gamma$ is a finite index subgroup of $\SL(n,\Z)$, it follows that $g_0^{-1} \Gamma_x g_0$ contains a finite index subgroup of $\{g \in \SL(n,\Z) \mid g e_1 = e_1 \}$ and hence, is a lattice in $H$. So, its diagonal action on $\R^n \times \R^n$ is ergodic. Then, the same is true for the diagonal action of $\Gamma_x$ on $\R^n \times \R^n$.
\end{proof}

\begin{proof}[Proof of Proposition \ref{prop.notuntwist}]
Write $G = \SL(n,\R)$ and $\Gamma = \SL(n,\Z)$. As before, identify $\Gamma \actson \R^n$ with $\Gamma \actson G / H$, where $H \cong \SL(n-1,\R) \ltimes \R^{n-1}$. Let $\theta : G / H \recht G$ be a Borel lifting. Define the $1$-cocycle $\om : \Gamma \times G/H \recht H$ by
$$g \theta(xH) = \theta(gxH) \om(g,xH)$$
whenever $g \in \Gamma$ and $xH \in G/H$. Assume that $\om$ is cohomologous to a group morphism $\delta : \Gamma \recht H$. This means that we can choose the lifting $\theta$ such that $g \theta(xH) = \theta(gxH) \delta(g)$ for $g \in \Gamma$ and almost all $xH \in G/H$.

The image of any group morphism $\SL(n,\Z) \recht \SL(n-1,\R) \ltimes \R^{n-1}$ is finite (see \cite[Theorem 6]{steinberg} for an elementary argument). So, we have found a finite index subgroup $\Gamma_0 \subset \SL(n,\Z)$ and a measurable map $\theta : \R^n \recht \SL(n,\R)$ such that $\theta(gx) = g \theta(x)$ for all $g \in \Gamma_0$ and almost all $x \in \R^n$. It follows that the map $(x,y) \mapsto \theta(x)^{-1} \theta(y)$ is invariant under the diagonal $\Gamma_0$-action, which is ergodic by Lemma \ref{lemma.ergodic-T}. Hence, $\theta$ is essentially constant, which is a contradiction with the formula $\theta(gx) = g \theta(x)$.
\end{proof}

\subsection{Proof of Theorems \ref{thm.OEsuperrigid-1} and \ref{thm.OEsuperrigid-2}}

We will deduce Theorems  \ref{thm.OEsuperrigid-1} and \ref{thm.OEsuperrigid-2} from the general Theorem \ref{thm.Haction} below, dealing with arbitrary actions of the form $\Gamma \actson \M_{n,k}(\R)/H$, where $\Gamma$ is a lattice and $H < \GL(k,\R)$ a closed subgroup. First of all, observe that these actions are essentially free and ergodic.

\begin{lemma} \label{lemma.measures-and-co}
Let $n > k$ and $\Gamma < \SL(n,\R)$ any lattice. Let $H \subset \GL(k,\R)$ be a closed subgroup. If $(-1,-1) \in \Gamma \times H$, put $\Gamma_0 = \Gamma/\{\pm 1\}$, otherwise put $\Gamma = \Gamma_0$.

The action $\Gamma_0 \actson \M_{n,k}(\R)/H$ is essentially free and ergodic. It never admits an invariant probability measure. It admits an infinite invariant measure if and only if $H$ is unimodular and satisfies $\det g = \pm 1$ for all $g \in H$.
\end{lemma}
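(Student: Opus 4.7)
The plan is to prove the four assertions of the lemma---essential freeness, ergodicity, non-existence of an invariant probability measure, and the criterion for an infinite invariant measure---in turn.

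First, for essential freeness, fix $g \in \Gamma$ mapping to a non-identity element of $\Gamma_0$: so $g \neq 1$ in general, and $g \neq \pm 1$ in the case $-1 \in \Gamma \cap H$. A coset $[A] \in \M_{n,k}(\R)/H$ is $g$-fixed precisely when $gA = Ah$ for some $h \in H$, which in particular forces $g$ to stabilize the column span $V_A \in \operatorname{Gr}(k,n)$. Under our hypothesis, $g$ is not a scalar on $\R^n$, so its fixed locus on $\operatorname{Gr}(k,n)$ is a proper real algebraic subvariety and hence Lebesgue-null; pulling back through the natural submersion $\M_{n,k}^*(\R) \recht \operatorname{Gr}(k,n)$ shows the set of $g$-fixed cosets in $X/H$ is null. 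For ergodicity, Lemma \ref{lemma.ergodic-T} applied with $k \leq n-1$ yields ergodicity of $\Gamma \actson \M_{n,k}(\R)$; since the right $H$-action commutes with the left $\Gamma$-action, every $\Gamma$-invariant Borel function on $X/H$ lifts to a $\Gamma$- and $H$-invariant function on $X$, hence is almost everywhere constant.

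For the measure-theoretic claims, I would invoke the Weil-Mackey disintegration theorem: using right Haar $\lambda_H^R$ on $H$, the quotient map $X \recht X/H$ induces a bijection between $\sigma$-finite Radon measures $\nu$ on $X/H$ and $\sigma$-finite $H$-invariant Radon measures $\tilde\nu$ on $X = \M_{n,k}(\R)$, via the formula $\tilde\nu(f) = \int_{X/H} \int_H f(xh)\,d\lambda_H^R(h)\,d\nu([x])$. A $\Gamma$-invariant Radon measure on $X/H$ in the natural Lebesgue class lifts to a $\Gamma \times H$-invariant Radon measure on $X$ equivalent to Lebesgue, which by ergodicity of $\Gamma \actson (X,\lambda)$ must be a scalar multiple of Lebesgue. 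Since $A \mapsto Ah$ scales Lebesgue by $|\det h|^n$, the $H$-invariance of $\lambda$ is equivalent to $\det h = \pm 1$ for all $h \in H$, yielding one of the two conditions. The unimodularity condition emerges on identifying $X/H$ with a homogeneous space $\SL(n,\R)/K$, where $K$ is the stabilizer of a chosen $H$-orbit: a direct computation expresses $\Delta_K$ in terms of $\Delta_H$ and $\det|_H$, and the existence of a $\Gamma$-invariant Radon measure (which, by rigidity in the higher-rank setting, is forced to be $\SL(n,\R)$-invariant) requires $\Delta_K \equiv 1$, i.e.\ $H$ unimodular once $\det|_H = \pm 1$. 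When both conditions hold, the descent measure is infinite because $X/H$ is never compact, so no invariant probability measure can exist.

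The main technical obstacle is the unimodularity part of the criterion, since the determinant condition already suffices to make Lebesgue $H$-invariant. The plan is to address it through the $\SL(n,\R)/K$ identification together with a Borel-density-flavored rigidity statement ensuring that any $\Gamma$-invariant Radon measure on $\SL(n,\R)/K$ is already $\SL(n,\R)$-invariant, forcing $K$ (and hence $H$, in the presence of the determinant condition) to be unimodular.
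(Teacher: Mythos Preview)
Your treatment of essential freeness (via the Grassmannian) and ergodicity (via Lemma~\ref{lemma.ergodic-T}) is correct. The paper takes a more uniform route: it identifies $\M_{n,k}(\R)/H$ with the homogeneous space $\SL(n,\R)/H_1$, where $H_1 = \{g \in \SL(n,\R) : gV = V \text{ and } g|_V \in H\}$ for $V = \lspan\{e_1,\ldots,e_k\}$, and then reads off all four assertions from standard facts about lattice actions on $G/H_1$. In particular, the invariant-measure criterion reduces in one step to computing when $H_1$ is unimodular.

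Your disintegration argument for the invariant-measure criterion has a genuine gap. The bijection you assert between measures on $X/H$ and right-$H$-invariant measures on $X$ is only well-defined when $H$ is unimodular: the fiber integral $\int_H f(xh)\,d\lambda_H^R(h)$ depends on the chosen representative $x$ of $[x]$, changing by the factor $\Delta_H(h_0)$ when $x$ is replaced by $xh_0$. The correct statement (using left Haar) is that the canonical lift $\tilde\nu$ of $\nu$ satisfies $\tilde\nu(Eh) = \Delta_H(h)\,\tilde\nu(E)$, not $\tilde\nu(Eh)=\tilde\nu(E)$. With this correction your ergodicity step still forces $\tilde\nu$ to be a constant multiple of Lebesgue, and the transformation law then becomes the single condition $\Delta_H(h) = |\det h|^n$ --- which one checks is exactly the unimodularity of $H_1$. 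So the two conditions emerge together from the disintegration, and no separate rigidity argument is needed; your two-stage structure (first $|\det|=\pm 1$ via disintegration, then unimodularity via rigidity) is an artifact of the incorrect transformation law. Note also that the lemma is stated for all $n>k$, so an appeal to higher-rank rigidity would not cover the small-$n$ cases anyway; ergodicity of $\Gamma \actson \M_{n,k}(\R)$, which you already invoke, suffices throughout.

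Finally, ``$X/H$ non-compact $\Rightarrow$ no invariant probability measure'' is not automatic (non-uniform lattices give non-compact quotients of finite volume). The clean reason here is that $H_1$ always contains the positive-dimensional subgroup $\SL(n-k,\R)\ltimes \M_{k,n-k}(\R)$, hence cannot be a lattice in $\SL(n,\R)$.
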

\begin{proof}
By Lemma \ref{lemma.ergodic-T}, $\Gamma \actson \M_{n,k}(\R)$ is ergodic, because $k < n$. A fortiori, $\Gamma \actson \M_{n,k}(\R)/H$ is ergodic.

Denote $V = \lspan\{e_1,\ldots,e_k\} \subset \R^n$ and define the closed subgroup $H_1 < \SL(n,\R)$ by
$$H_0 = \{g \in \SL(n,\R) \mid g V = V \;\;\text{and}\;\; g|_V \in H \;\}\; .$$
We can identify $\Gamma_0 \actson \M_{n,k}(\R)/H$ with $\Gamma \actson \SL(n,\R)/H_1$. From this description, essential freeness follows. Also the statement about invariant measures follows, because $H_1$ is unimodular if and only if $H$ is unimodular and satisfies $\det g = \pm 1$ for all $g \in H$.
\end{proof}

\begin{theorem} \label{thm.Haction}
Let $H \subset \GL(k,\R)$ be a closed subgroup and $\Lambda \actson (Y,\eta)$ any essentially free, ergodic, non-singular action of the countable group $\Lambda$. Suppose $n \geq 4k+1$.

{\bf Case $-1 \in H$.} Let $\Gamma \subset \PSL(n,\R)$ be a lattice and put $\Gammatil := \{\pm 1\} \cdot \Gamma \subset \GL(n,\R)$. The actions $\Gamma \actson \M_{n,k}(\R)/H$ and $\Lambda \actson Y$ are SOE if and only if $\Lambda \actson Y$ is conjugate to an induction of one of the following actions:
    \begin{enumerate}
    \item $\Gamma \times H/N \actson \M_{n,k}(\R) / N$, where $N \lhd H$ is an open normal subgroup with $-1 \in N$,
    \item $\displaystyle \frac{\Gammatil \times H/N}{\{\pm (1,1)\}} \actson \M_{n,k}(\R) / N$, where $N \lhd H$ is an open normal subgroup with $-1 \not\in N$.
    \end{enumerate}

{\bf Case $-1 \not\in H$.} Let $\Gamma \subset \SL(n,\R)$ be a lattice. The actions $\Gamma \actson \M_{n,k}(\R)/H$ and $\Lambda \actson Y$ are SOE if and only if $\Lambda \actson Y$ is conjugate to an induction of one of the following actions:
    \begin{enumerate}
    \item $\Gamma \times H/N \actson \M_{n,k}(\R)/N$, where $N \lhd H$ is an open normal subgroup,
    \item (only when $-1 \in \Gamma$) $\displaystyle \frac{\Gamma}{\{\pm 1\}} \times \frac{H}{N} \actson \M_{n,k}(\R) / \bigl(\{\pm 1\} \cdot N\bigr)$, where $N \lhd H$ is an open normal subgroup.
    \end{enumerate}
\end{theorem}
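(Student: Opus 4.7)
The plan is to derive Theorem \ref{thm.Haction} from the cocycle superrigidity Theorem \ref{thm.cocycle-superrigid-concrete}(3), which applies here because $n \geq 4k+1$, for the action $G \actson \M_{n,k}(\R)$, together with the essential freeness of $\Gamma \actson X_H := \M_{n,k}(\R)/H$ provided by Lemma \ref{lemma.measures-and-co}.

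I would begin with the easy direction. Whenever $N \lhd H$ is an open normal subgroup, the quotient $H/N$ is countable and discrete, and the projection $\M_{n,k}(\R)/N \recht X_H$ has countable fibres coinciding with the $H/N$-orbits, so that the orbit equivalence relation on $\M_{n,k}(\R)/N$ of any acting group listed in cases (1) or (2) projects onto $\cR(\Gamma \actson X_H)$ to give an SOE, and the same persists after any induction. For the converse, I would let $\Delta : X_0 \recht Y_0$ be an SOE with Zimmer cocycle $\al : \Gamma \times X_H \recht \Lambda$, and exploit the fact that, since $H$ acts on $\M_{n,k}(\R)$ commuting with $\Gamma$ and fixing the projection $p : \M_{n,k}(\R) \recht X_H$, the formula
$$\om((\gamma,h), A) := \al(\gamma, p(A))$$
defines a $G$-cocycle on $\M_{n,k}(\R)$ with values in $\Lambda$ whose restriction to $\{1\}\times H$ is trivial; this is the cocycle to which superrigidity can be applied.

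The group $\Lambda$, being countable, is Polish of finite type, so Theorem \ref{thm.cocycle-superrigid-concrete}(3) will produce a measurable $\psi : \M_{n,k}(\R) \recht \Lambda$ and a continuous group morphism $\delta : G \recht \Lambda$ with
$$\om(g,A) \; = \; \psi(g \cdot A)\,\delta(g)\,\psi(A)^{-1}$$
almost everywhere. Since $\Lambda$ is discrete countable, the restriction $\delta|_H$ has open kernel $N \lhd H$, and the cohomology relation for elements of $N$ forces $\psi$ to be essentially $N$-invariant in its second argument, thus descending to $\tilde\psi : \M_{n,k}(\R)/N \recht \Lambda$. I would then form the untwisted map
$$\tilde\Delta : \M_{n,k}(\R)/N \recht Y \; : \; [A]_N \mapsto \tilde\psi([A]_N)^{-1}\,\Delta(p(A))$$
and check by direct computation that it intertwines the action of $G_N := G/(\{1\}\times N)$ on $\M_{n,k}(\R)/N$ with that of $\delta(G_N) \subset \Lambda$ on its image in $Y$, so that $\Lambda \actson Y$ is the induction of $\delta(G_N) \actson \tilde\Delta(\M_{n,k}(\R)/N)$.

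The final step is to identify $G_N/\ker\delta$ with one of the four groups listed in the statement. Essential freeness of $\Gamma \actson X_H$, together with the rigidity of morphisms from the lattice $\Gamma$ into countable groups (applied via Lemma \ref{lemma.tata} to the $\Gamma$-equivariance behavior of $\psi$), forces $\ker \delta \cap \Gamma$ to lie in $\{\pm 1\} \cap \Gamma$. Depending on whether $-1$ lies in $\Gamma$, $H$, or $N$, the quotient $G_N/\ker\delta$ acting on $\M_{n,k}(\R)/N$ (or on the further $\{\pm 1\}$-quotient $\M_{n,k}(\R)/(\{\pm 1\}\cdot N)$ in the subcase $-1 \in \Gamma$, $-1 \notin H$) matches precisely one of the four cases, the natural $\{\pm(1,1)\}$-identifications arising because $-1 \in \Gammatil$ and $-1 \in H/N$ both act on $\M_{n,k}(\R)/N$ by $A \mapsto -A$. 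The main obstacle will be this last bookkeeping step: verifying that $\ker(\delta|_\Gamma)$ is indeed at most $\{\pm 1\} \cap \Gamma$ and carefully tracking the $\{\pm 1\}$-identifications across the four subcases to recover exactly the statement's case distinction.
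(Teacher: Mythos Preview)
Your overall strategy is the same as the paper's: lift the Zimmer cocycle from $\Gamma \actson X_H$ to a $G$-cocycle on $\M_{n,k}(\R)$, apply Theorem \ref{thm.cocycle-superrigid-concrete}(3), untwist, and read off the induced action. The paper packages the untwisting-and-induction step into the black box Lemma \ref{lemma.OE}, and then finishes by classifying the open normal subgroups of $G$ that act properly on $\M_{n,k}(\R)$. Your proposal unwinds that lemma inline, which is fine, but two of the steps you mark as routine are where the actual content lies, and your justifications for them are not correct.

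First, the control on $\ker\delta \cap \Gamma$. You write that ``rigidity of morphisms from the lattice $\Gamma$ into countable groups (applied via Lemma \ref{lemma.tata} to the $\Gamma$-equivariance behaviour of $\psi$)'' forces $\ker\delta \cap \Gamma \subset \{\pm 1\}$. Lemma \ref{lemma.tata} says nothing about group morphisms; it is a statement about functions on product spaces. What is actually needed is a two-step argument: (i) show that $N_1 := \ker\delta$ acts \emph{properly} on $\M_{n,k}(\R)$, which uses in an essential way that $\Delta$ is an SOE (so locally a non-singular isomorphism) and that $\Lambda$ acts freely --- this is the core of Lemma \ref{lemma.OE} and does not follow from equivariance of $\tilde\Delta$ alone; (ii) once properness is known, invoke Margulis' normal subgroup theorem: if $N_1$ projected onto something in $\Gamma$ beyond $\{\pm 1\}$, a commutator argument would put a finite-index subgroup $\Gamma_0$ of $\Gamma$ inside $N_1$, and $\Gamma_0$ acts ergodically on $\M_{n,k}(\R)$ by Lemma \ref{lemma.ergodic-T}, contradicting properness. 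Neither step is supplied by your proposal.

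Second, you assert that $\tilde\Delta$ conjugates $G_N \actson \M_{n,k}(\R)/N$ with $\delta(G_N) \actson \tilde\Delta(\M_{n,k}(\R)/N)$ and that $\Lambda \actson Y$ is induced from the latter. But $\tilde\Delta$ is only equivariant by construction; you have not shown it is a non-singular isomorphism onto its image (if $\ker\delta$ strictly contains $\{1\}\times N$ it will not be), nor that $\lambda \cdot Y_1 \cap Y_1$ is negligible for $\lambda \notin \delta(G_N)$. Both of these require the SOE hypothesis again and are handled explicitly in the paper's Lemma \ref{lemma.OE}. Once you supply (i), (ii) and these measure-theoretic checks, the final $\{\pm 1\}$-bookkeeping goes through as you indicate.
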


\begin{remark}
Given a stable orbit equivalence $\Delta : \M_{n,k}(\R)/ H \recht Y$ between the actions $\Gamma \actson \M_{n,k}(\R)/H$ and $\Lambda \actson Y$, Theorem \ref{thm.Haction} provides a conjugacy $\Psi$ between one of the listed actions and $\Lambda_1 \actson Y_1$ such that $\Lambda \actson Y$ is induced from $\Lambda_1 \actson Y_1$. In fact, one moreover has $\Psi(\ox) \in \Lambda \cdot \Delta(\ox)$ for almost every $x \in \M_{n,k}(\R)$.
\end{remark}

If $\Delta : X_0 \recht Y_0$ is a SOE between $\Gamma \actson X$ and $\Lambda \actson Y$, we can (and will tacitly) extend $\Delta$ to a countable-to-one, measurable $\Delta : X \recht Y$ satisfying $\Delta(g \cdot x) \in \Lambda \cdot \Delta(x)$ for all $g \in \Gamma$ and almost all $x \in X$.

If $\Delta : X \recht Y$ is a SOE between the essentially free actions $\Gamma \actson X$ and $\Lambda \actson Y$, we get a \emph{Zimmer $1$-cocycle} $\om$ for the action $\Gamma \actson X$ with values in $\Lambda$, determined by the formula
$$\Delta(g \cdot x) = \om(g,x) \cdot \Delta(x)$$
almost everywhere. As a general principle, if the $1$-cocycle $\om$ is cohomologous to a group morphism $\Gamma \recht \Lambda$, the stable orbit equivalence is \lq essentially\rq\ given by a conjugacy of the actions: see \cite[Proposition 4.2.11]{Z2}, \cite[Lemma 4.7]{V-Bourbaki} and \cite[Theorem 1.8]{furman-on-popa}. In our framework of non p.m.p.\ actions, we again need such a principle: see Lemma \ref{lemma.OE} below.

The non-singular action $G \actson (X,\mu)$ of the l.c.s.c.\ group $G$ on the standard measure space $(X,\mu)$ is called \emph{essentially free and proper} if there exists a measurable map $\pi : X \recht G$ such that $\pi(g \cdot x) = g \pi(x)$ for almost all $(g,x) \in G \times X$. Equivalently, there exists a $^*$-isomorphism $\rL^\infty(X,\mu) \recht \rL^\infty(G) \ovt \rL^\infty(X,\mu)^G$ conjugating the natural $G$-actions.

\begin{lemma} \label{lemma.OE}
Let $G$ be a l.c.s.c.\ group and $G \actson[\si] (X,\mu)$ a non-singular, essentially free, ergodic action.  Assume that $\si$ is $\Ufin$-cocycle superrigid. Let $N \lhd G$ be an open normal subgroup such that the restricted action $\si|_N$ is proper. Let $\Lambda \actson (Y,\eta)$ be an essentially free, ergodic, non-singular action.

If $\Delta : X/N \recht Y$ a SOE between the actions $G/N \actson X/N$ and $\Lambda \actson Y$, there exists
\begin{itemize}
    \item a subgroup $\Lambda_1 < \Lambda$ and a non-negligible $Y_1 \subset Y$ such that $\Lambda \actson Y$ is induced from $\Lambda_1 \actson Y_1$~;
    \item an open normal subgroup $N_1 \lhd G$ such that $\si|_{N_1}$ is proper~;
\end{itemize}
such that the actions $G/N_1 \actson X/N_1$ and $\Lambda_1 \actson Y_1$ are conjugate through the non-singular isomorphism $\Psi : X/N_1 \recht Y_1$ and the group isomorphism $\delta : G/N_1 \recht \Lambda_1$. Furthermore, $\Delta(\ox) \in \Lambda \cdot \Psi(\ox)$ for almost all $x \in X$.
\end{lemma}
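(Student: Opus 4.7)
My plan is to lift the SOE $\Delta$ to the original space $X$ so as to produce a Zimmer $1$-cocycle on $G$, straighten it via $\Ufin$-cocycle superrigidity of $\sigma$, and then read off the desired conjugacy from the resulting equivariant map. Concretely, I define $\tilde\Delta : X \to Y$ by $\tilde\Delta(x) := \Delta(\bar x)$, where $\bar x$ denotes the image of $x$ in $X/N$; this map is countable-to-one, $N$-invariant, and satisfies $\tilde\Delta(g\cdot x) \in \Lambda \cdot \tilde\Delta(x)$ for all $g \in G$ and almost all $x$. By essential freeness of $\Lambda \actson Y$, there is a unique measurable $\omega : G \times X \to \Lambda$ with $\tilde\Delta(g\cdot x) = \omega(g,x) \cdot \tilde\Delta(x)$; this is a $1$-cocycle, and the $N$-invariance of $\tilde\Delta$ forces $\omega(n,x) = e$ for every $n \in N$ and almost every $x$.

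Viewing the countable discrete group $\Lambda$ as a Polish group of finite type, the $\Ufin$-cocycle superrigidity of $\sigma$ yields a continuous morphism $\delta : G \to \Lambda$ and a measurable $\phi : X \to \Lambda$ with $\omega(g,x) = \phi(g \cdot x)^{-1} \delta(g) \phi(x)$. I then set $N_1 := \ker \delta$, which is open normal in $G$ since $\Lambda$ is discrete; set $\Lambda_1 := \delta(G)$; and let $\bar\delta : G/N_1 \to \Lambda_1$ be the induced group isomorphism. Put $\Delta_1(x) := \phi(x) \cdot \tilde\Delta(x)$; a direct computation yields $\Delta_1(g\cdot x) = \delta(g) \cdot \Delta_1(x)$, so $\Delta_1$ is $N_1$-invariant and descends to a Borel map $\Psi : X/N_1 \to Y$. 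I show $\Psi$ is essentially injective as follows: if $\Delta_1(x_1) = \Delta_1(x_2)$, then $\tilde\Delta(x_1)$ and $\tilde\Delta(x_2)$ lie in the same $\Lambda$-orbit, so by the SOE property $x_2 = g \cdot x_1$ modulo $N$ for some $g \in G$; then $\delta(g) \Delta_1(x_1) = \Delta_1(x_2) = \Delta_1(x_1)$, and essential freeness of $\Lambda \actson Y$ forces $\delta(g) = e$, hence $g \in N_1$.

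Setting $Y_1 := \Psi(X/N_1)$, the pair $(\Psi, \bar\delta)$ provides a non-singular conjugacy of $G/N_1 \actson X/N_1$ with $\Lambda_1 \actson Y_1$. For the inducedness of $\Lambda \actson Y$ from $\Lambda_1 \actson Y_1$: the set $Y_1$ is $\Lambda_1$-invariant by equivariance; $\Lambda \cdot Y_1$ is conull because $\tilde\Delta(x) = \phi(x)^{-1} \Psi(\bar x_{N_1}) \in \Lambda \cdot Y_1$ and $\Delta$ is a SOE; and if $\lambda Y_1 \cap Y_1$ were non-negligible for some $\lambda \in \Lambda$, a repetition of the injectivity argument above would force $\lambda \in \Lambda_1$. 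Finally, $\Delta(\bar x) = \tilde\Delta(x) \in \Lambda \cdot \Psi(\bar x_{N_1})$ by construction.

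The main obstacle is proving that $\sigma|_{N_1}$ is proper, since $N_1 = \ker \delta$ need not be contained in $N$. My plan is as follows: let $N' := N \cap N_1$, which is open normal in $G$, so $\sigma|_{N'}$ is proper as the restriction of the proper action $\sigma|_N$ to the closed subgroup $N'$. The quotient $K := N_1 / N'$ embeds in the discrete group $G/N$ and is therefore countable discrete. Via the identification $X/N_1 \cong (X/N')/K$, and the fact that $\Psi$ makes $X/N_1$ a standard Borel space (being Borel isomorphic to $Y_1 \subset Y$), the countable group $K$ acts smoothly on the standard space $X/N'$. A Borel transversal for $K \actson X/N'$, combined with the proper structure of $\sigma|_{N'}$ (providing both a Borel section of $X \to X/N'$ and an $N'$-equivariant map $X \to N'$), can then be glued into a Borel map $\pi : X \to N_1$ satisfying $\pi(g \cdot x) = g\, \pi(x)$ for $g \in N_1$, proving that $\sigma|_{N_1}$ is proper.
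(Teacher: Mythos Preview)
Your proposal is correct and follows essentially the same route as the paper: lift the SOE to $X$, apply $\Ufin$-cocycle superrigidity of $\sigma$ to obtain $\delta$ and the $\delta$-equivariant map, set $N_1=\ker\delta$, and establish properness of $\sigma|_{N_1}$ by showing that the countable group $N_1/(N\cap N_1)$ acts smoothly on $X/(N\cap N_1)$. The only difference is in bookkeeping: the paper makes the measure-theoretic details (non-singularity of $\Psi$, smoothness of the $K$-action) explicit by partitioning $X/(N\cap N_1)$ into pieces on which $\Theta$ is a non-singular isomorphism and showing each nontrivial $g\in K$ moves each piece off itself, whereas you appeal directly to the principle that the fibers of a Borel map into a standard Borel space form a smooth equivalence relation.
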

\begin{proof}
Let $\Delta : X/N \recht Y$ be a SOE. By cocycle superrigidity of $\si$, take a measurable map $\vphi : X \recht \Lambda$ such that, writing $\Theta(x) = \vphi(x)^{-1} \cdot \Delta(\ox)$, we have $\Theta(g \cdot x) = \delta(g) \cdot \Theta(x)$ almost everywhere, where $\delta : G \recht \Lambda$ is a continuous group morphism. Put $N_1 = \Ker \delta$. So, $N_1$ is an open normal subgroup of $G$. Then, $N \cap N_1$ is still open in $G$ and we consider $X/(N \cap N_1)$ with the quotient map $\pi : X/(N \cap N_1) \recht X/N$. It follows that we can view $\Theta$ as a measurable map $\Theta : X/(N \cap N_1) \recht Y$ such that $\Delta(\pi(x)) \in \Lambda \cdot \Theta(x)$ for almost all $x \in X / (N \cap N_1)$ and $\Theta(g \cdot x) = \delta(g) \cdot \Theta(x)$ almost everywhere.

Using the facts that the countable group $N / (N \cap N_1)$ acts freely and properly on $X/(N \cap N_1)$, that $\Lambda$ is countable and that $\Delta : X/N \recht Y$ is locally a non-singular isomorphism, it follows that $X/(N \cap N_1)$ can be partitioned in a sequence of non-negligible subsets $(\cU_n)_n$, such that for every $n$, $\Theta|_{\cU_n}$ is a non-singular isomorphism between $\cU_n$ and a non-negligible subset of $Y$. But then, for every non-trivial element $g \in N_1 / (N \cap N_1)$ and every $n$, we conclude that $g \cdot \cU_n \cap \cU_n$ has measure zero. It follows that $N_1 / (N \cap N_1)$ acts freely and properly on $X/(N \cap N_1)$. So, $N_1$ acts freely and properly on $X$. Hence, we can form the quotient space $X/N_1$ and view $\Theta$ as a measurable map $\Theta : X/N_1 \recht Y$ such that $\Delta(\ox) \in \Lambda \cdot \Theta(\ox)$ for almost all $x \in X$ and $\Theta(\og \cdot \ox) = \delta(\og) \cdot \Theta(\ox)$ almost everywhere. Now, $\delta : G/N_1 \recht \Lambda$ is an injective group morphism. Still, $X/N_1$ can be partitioned into a sequence of non-negligible subsets $(\cU_n)$ such that $\Theta|_{\cU_n}$ is a non-singular isomorphism between $\cU_n$ and $\cV_n \subset Y$.

We claim that $\cV_n \cap \cV_m$ has measure zero for every $n \neq m$. If this is not the case, take $\cW \subset \cU_n$ and $\cW' \subset \cU_m$ non-negligible and a non-singular isomorphism $\rho : \cW \recht \cW'$ such that $\Theta(\rho(x)) = \Theta(x)$ for $x \in \cW$. Since $\Delta$ is a SOE, $\rho(x) \in (G/N_1) \cdot x$ for almost all $x \in \cW$. Hence, making $\cW$ smaller but still non-negligible, we may assume that $\rho(x) = \og \cdot x$ for all $x \in \cW$ and some $\og \in G/N_1$. Since $\cW \cap \cW'$ has measure zero and the action of $G/N_1$ on $X/N_1$ is essentially free, we get $\og \neq e$. But also, $\delta(\og) \cdot \Theta(x) = \Theta(x)$ for almost all $x \in \cW$. This is a contradiction with the injectivity of $\delta$ and the essential freeness of $\Lambda \actson Y$. This proves the claim and we have found that $\Theta$ is a non-singular isomorphism between $X/N_1$ and a non-negligible subset $Y_1 \subset Y$. Set $\Lambda_1 = \delta(G/N_1)$.

It remains to prove that $\Lambda \actson Y$ is induced from $\Lambda_1 \actson Y_1$. So, let $h \in \Lambda$ and assume that $h \cdot Y_1 \cap Y_1$ is non-negligible. We have to prove that $h \in \Lambda_1$. By our assumption, take $\cW,\cW' \subset X/N_1$ non-negligible and a non-singular isomorphism $\rho : \cW \recht \cW'$ such that $h \cdot \Theta(x) = \Theta(\rho(x))$ for all $x \in \cW$. Since $\Delta$ was a SOE, we can make $\cW$ smaller but still non-negligible and assume that $\rho(x) = \og \cdot x$ for all $x \in \cW$ and some $\og \in G/N_1$. But then, $h \cdot \Theta(x) = \delta(\og) \cdot \Theta(x)$ for almost all $x \in \cW$. Since $\Lambda \actson Y$ is essentially free, it follows that $h = \delta(\og) \in \Lambda_1$.
\end{proof}

\begin{proof}[Proof of Theorem \ref{thm.Haction}]
We only prove the case $-1 \in H$, the case $-1 \not\in H$ being analogous. By Theorem \ref{thm.cocycle-superrigid-concrete}, the action of $G:=(\Gammatil \times H)/\{\pm (1,1)\}$ on $\M_{n,k}(\R)$ is $\Ufin$-cocycle superrigid. By Lemma \ref{lemma.OE}, we only have to prove that the following subgroups of $\Gammatil \times H$ are the only open normal subgroups containing $(-1,-1)$ and acting properly on $\M_{n,k}(\R)$.
\begin{itemize}
\item $\{\pm 1\} \times N$, where $N \lhd H$ is an open normal subgroup with $-1 \in N$.
\item $(\{1\} \times N) \cup (\{-1\} \times -N)$, where $N \lhd H$ is an open normal subgroup with $-1 \not\in N$.
\end{itemize}
So, let $N \lhd (\Gammatil \times H)$ be a closed normal subgroup acting properly on $\M_{n,k}(\R)$. It is sufficient to prove that $N \subset \{\pm 1\} \times H$. Suppose that $N \not\subset \{\pm 1\} \times H$ and take $(g,h) \in N$ with $g \neq \pm 1$. Take $k \in \Gammatil$ such that the commutator $t := kgk^{-1} g^{-1} \neq \pm 1$. It follows that $(t,1) \in N$. By Margulis' normal subgroup theorem \cite{margulis}, we have $\Gamma_0 \times \{1\} \subset N$ for some finite index subgroup $\Gamma_0 < \Gammatil$. By Lemma \ref{lemma.ergodic-T}, $\Gamma_0$ acts ergodically on $\M_{n,k}(\R)$, contradicting the properness of $N \actson \M_{n,k}(\R)$.
\end{proof}

\begin{proof}[Proof of Theorem \ref{thm.OEsuperrigid-1}]
Statement 1 follows immediately from Theorem \ref{thm.Haction}.

We now prove statement 2. We claim that the following are the only normal subgroups $N$ of $\SL(n,\Z) \ltimes \Z^n$ that act properly on $\R^n$~:
\begin{itemize}
\item $N=\{e\}$,
\item $N = \lambda \Z^n$ for some $\lambda \in \N \setminus \{0\}$,
\item (only when $n$ is even) $N = \{\pm 1\} \ltimes \lambda \Z^n$ for $\lambda \in \{1,2\}$.
\end{itemize}
Statement 2 of Theorem \ref{thm.OEsuperrigid-1} then follows from Lemma \ref{lemma.OE} and Theorem \ref{thm.cocycle-superrigid-concrete}, where the $\Ufin$-cocycle superrigidity of the affine action $\SL(n,\Z) \ltimes \Z^n \actson \R^n$ was established.

So, let $N \lhd \SL(n,\Z) \ltimes \Z^n$ be a normal subgroup acting properly on $\R^n$. Suppose first that $N \not\subset \Z^n$. Taking the commutator of $(g,x) \in N$ with $g \neq 1$ and an arbitrary $(1,y)$, $y \in \Z^n$, it follows that $H:=N \cap \Z^n \neq \{0\}$. Hence, $H$ is a non-zero, globally $\SL(n,\Z)$-invariant subgroup of $\Z^n$. So, $H = \lambda \Z^n$ for some $\lambda \in \N \setminus \{0\}$. If $N \not\subset \{\pm 1\} \ltimes \Z^n$, it would follow that $N$ has finite index in $\SL(n,\Z) \ltimes \Z^n$, contradicting the properness of $N \actson \R^n$. So, we have shown that in all cases $N \subset \{\pm 1\} \ltimes \Z^n$. It is now straightforward to deduce the above list of possibilities for $N$.
\end{proof}

\begin{proof}[Proof of Theorem \ref{thm.OEsuperrigid-2}]
This theorem is a special case of Theorem \ref{thm.Haction}.
\end{proof}

\subsection{Describing all $1$-cocycles of quotient actions}

Finally, cocycle superrigidity of $G \actson (X,\mu)$ allows to describe all $1$-cocycles for $G/N \actson X/N$ when $N \lhd G$ is a closed normal subgroup of $G$ acting essentially freely and properly on $X$. We start with the following proposition, closely related to \cite[Lemma 5.3]{PV2}, and illustrate it with two examples.

\begin{proposition} \label{prop.quotient}
Let $G$ be a l.c.s.c.\ group and $G \actson[\si] (X,\mu)$ a non-singular action. Let $N \lhd G$ be a closed, normal subgroup such that the restriction $\si|_N$ is essentially free and proper. Assume that $\si$ is $\Ufin$-cocycle superrigid.

Choose a measurable map $\pi : X \recht N$ satisfying $\pi(g \cdot x) = g \pi(x)$ for almost all $(g,x) \in N \times X$. Denote by $g \mapsto \og$ and $x \mapsto \ox$ the quotient maps $G \recht G/N$, resp.\ $X \recht X/N$. Then,
$$\om : \frac{G}{N} \times \frac{X}{N} \recht G : \om(\og,\ox) = \pi(g \cdot x)^{-1} g \pi(x)$$
is a well-defined $1$-cocycle.

Every $1$-cocycle for the action $G/N \actson X/N$ with values in a Polish group of finite type $\cG$ is cohomologous to $\delta \circ \om$ for a continuous morphism $\delta : G \recht \cG$. If the diagonal action $G \actson X \times X$ is ergodic, $\delta$ is uniquely determined up to conjugacy by an element of $g \in \cG$.
\end{proposition}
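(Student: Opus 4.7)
My plan breaks into three parts: well-definedness and the cocycle identity for $\omega$; existence of $\delta$ via $\Ufin$-cocycle superrigidity; and uniqueness via Lemma \ref{lemma.tata}.

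For well-definedness, replacing $x$ by $n\cdot x$ with $n\in N$ uses normality to write $g\cdot(n\cdot x)=n'\cdot(g\cdot x)$ with $n':=gng^{-1}\in N$, so that $\pi(gn\cdot x)=n'\pi(g\cdot x)$, and the cancellation $(n')^{-1}gn=g$ gives $\omega(\og,\overline{n\cdot x})=\omega(\og,\ox)$; the substitution $g\mapsto gn$ is analogous. The cocycle identity reduces to a one-line telescope:
\[
\omega(\og,\overline{h\cdot x})\,\omega(\overline{h},\ox)=\pi(gh\cdot x)^{-1}g\pi(h\cdot x)\cdot\pi(h\cdot x)^{-1}h\pi(x)=\pi(gh\cdot x)^{-1}gh\pi(x).
\]

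For any $1$-cocycle $\eta:(G/N)\times(X/N)\recht\cG$, I would lift it to $\widetilde\eta(g,x):=\eta(\og,\ox)$, a $1$-cocycle of $G\actson X$ with values in $\cG$. The $\Ufin$-cocycle superrigidity of $\sigma$ then yields a continuous morphism $\delta:G\recht\cG$ and a measurable $\psi:X\recht\cG$ with $\widetilde\eta(g,x)=\psi(g\cdot x)^{-1}\delta(g)\psi(x)$. For $h\in N$, $\widetilde\eta(h,x)=\eta(\bar{e},\ox)=1$ forces $\psi(h\cdot x)=\delta(h)\psi(x)$, so the auxiliary map $\widetilde\psi(\ox):=\delta(\pi(x))^{-1}\psi(x)$ is $N$-invariant and descends to a measurable map $X/N\recht\cG$. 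Using $\delta(\omega(\og,\ox))=\delta(\pi(g\cdot x))^{-1}\delta(g)\delta(\pi(x))$, a direct substitution gives
\[
\eta(\og,\ox)=\widetilde\psi(\overline{g\cdot x})^{-1}\,\delta(\omega(\og,\ox))\,\widetilde\psi(\ox),
\]
exhibiting $\eta$ as cohomologous to $\delta\circ\omega$.

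For uniqueness, suppose $\delta_1,\delta_2:G\recht\cG$ are continuous morphisms with $\delta_1\circ\omega$ and $\delta_2\circ\omega$ cohomologous via some $\psi_0:X/N\recht\cG$. Lifting to $X$ and absorbing the $\pi$-terms via $\phi(x):=\delta_2(\pi(x))\,\psi_0(\ox)\,\delta_1(\pi(x))^{-1}$ produces a measurable $\phi:X\recht\cG$ satisfying
\[
\phi(g\cdot x)=\delta_2(g)\,\phi(x)\,\delta_1(g)^{-1}.
\]
Since $\cG\subset\cU(N)$ for a II$_1$ factor $N$, the Polish group $\cG\times\cG$ acts continuously on $\cG$ by $(u_1,u_2)\cdot z:=u_1 z u_2^{-1}$, preserving the complete metric $d(u,v):=\|u-v\|_2$. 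Viewing $\phi$ as a map $\{*\}\times X\recht\cG$ equivariant with cocycle $(g,*)\mapsto(\delta_2(g),\delta_1(g))$, Lemma \ref{lemma.tata} applies: its hypothesis reduces to $\rL^\infty(X\times X)^G=\C$, which is precisely the assumed ergodicity of $G\actson X\times X$, and its conclusion forces $\phi$ to be essentially constant. Writing $\phi\equiv u\in\cG$ then yields $\delta_2(g)=u\,\delta_1(g)\,u^{-1}$.

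The main technical step is the clean choice of the auxiliary functions $\widetilde\psi$ and $\phi$ so that the $\delta$-of-$\pi$ factors absorb correctly; once this bookkeeping is arranged, the two inputs of the proposition---$\Ufin$-cocycle superrigidity of $\sigma$ and, for uniqueness, ergodicity of $G\actson X\times X$---feed directly into the argument via Lemma \ref{lemma.tata} applied with a singleton first variable.
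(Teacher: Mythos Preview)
Your proof is correct and follows essentially the same approach as the paper: lift the cocycle to $G \actson X$, apply $\Ufin$-cocycle superrigidity, use the $N$-equivariance $\psi(h\cdot x)=\delta(h)\psi(x)$ to descend via $\widetilde\psi(\ox)=\delta(\pi(x))^{-1}\psi(x)$, and deduce uniqueness from Lemma~\ref{lemma.tata}. You supply more detail than the paper (well-definedness of $\omega$, and the explicit reduction of uniqueness to Lemma~\ref{lemma.tata} with a singleton first factor), but the argument is the same; one cosmetic remark: you reuse the letter $N$ for the II$_1$ factor containing $\cG$, which clashes with the normal subgroup $N\lhd G$.
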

\begin{proof}
Let $\Omega : G/N \times X/N \recht \cG$ be a $1$-cocycle with values in the Polish group of finite type $\cG$. From cocycle superrigidity of $\si$, let $\vphi : X \recht \cG$ be a measurable map and $\delta : G \recht \cG$ a continuous group morphism such that
$$\Omega(\og,\ox) = \vphi(g \cdot x)^{-1} \delta(g) \vphi(x)$$
almost everywhere. Replacing $g$ by $hg$, $h \in N$, it follows that $\vphi(hg \cdot x)^{-1} \delta(h) = \vphi(g \cdot x)^{-1}$ and hence, $\vphi(h \cdot x) = \delta(h) \vphi(x)$ almost everywhere. So, we can define $\Psi(\ox) = \delta(\pi(x))^{-1} \vphi(x)$. By construction, $\Psi$ makes $\Omega$ cohomologous to $\delta \circ \omega$. The uniqueness of $\delta$ follows directly from Lemma \ref{lemma.tata}.
\end{proof}

\begin{example} \label{ex.cocycles-torus}
Let $\Gamma \subset \SL(n,\Z)$ be a finite index subgroup and consider the action $\Gamma \actson \T^n$.
    \begin{itemize}
    \item Choosing a measurable map $p : \R^n \recht \Z^n$ such that $p(x+y) = x + p(y)$ for all $x \in \Z^n$ and almost all $y \in \R^n$, the formula $$\om : \Gamma \times \R^n/\Z^n \recht \Gamma \ltimes \Z^n : \om(g,\ox) = p(g \cdot x)^{-1} g p(x)$$ defines a $1$-cocycle for $\Gamma \actson \T^n$ with values in $\Gamma \ltimes \Z^n$.
    \item Every $1$-cocycle with values in a Polish group of finite type $\cG$ is cohomologous with $\delta \circ \om$ for a group morphism $\delta : \Gamma \ltimes \Z^n \recht \cG$, uniquely determined up to conjugacy by an element in $\cG$.
    \end{itemize}
\end{example}

Note that by Zimmer's cocycle superrigidity theorem \cite[Theorem 5.2.5]{Z2},
any Zariski dense $1$-cocycle for any ergodic p.m.p.\ action $\SL(n,\Z) \actson
(X,\mu)$, $n \geq 3$, taking values in a connected simple real
algebraic non-compact center free group, is
cohomologous to a group morphism. For the specific action $\SL(n,\Z)
\actson \R^n/\Z^n$, $n \geq 5$, the previous example provides an explicit
description of all $1$-cocycles of $\SL(n,\Z) \actson \R^n / \Z^n$
with values in an arbitrary Polish group of finite type.

\begin{example} \label{ex.cocycles-flag}
Let $\Gamma \subset \PSL(n,\R)$ be a lattice and $X$ the real flag manifold of signature $(d_1,\ldots,d_l,n)$ with $n \geq 4 d_l +1$. We obtain as follows all $1$-cocycles for the action $\Gamma \actson X$ with values in a Polish group of finite type.
    \begin{itemize}
    \item Identify $X = \M_{n,k}(\R) /H$ and choose a measurable map $p : \M_{n,k}(\R) \recht H$ satisfying $p(A g) = p(A) g$ for almost all $A \in \M_{n,k}(\R)$, $g \in H$. Let $\Gammatil = \{\pm 1\} \cdot \Gamma$ be the double cover of $\Gamma$ in $\GL(n,\R)$ and define $G := (\Gammatil \times H)/\{\pm (1,1)\}$. The formula $$\om : \Gamma \times X \recht G : \om(\og,\ox) = (g, p(g x) p(x)^{-1}) \mod \{\pm (1,1)\}$$
        defines a $1$-cocycle with values in $G$. Here $\Gammatil \recht \Gamma : g \mapsto \og$ and $\M_{n,k}(\R) \recht X : x \mapsto \ox$ are the quotient maps.
    \item Every $1$-cocycle with values in a Polish group of finite type $\cG$ is cohomologous with $\delta \circ \om$ for a continuous group morphism $\delta : G \recht \cG$, uniquely determined up to conjugacy by an element in $\cG$.
    \end{itemize}
\end{example}

\section{Classification up to orbit equivalence}

Combining the results of \cite{F2} with Theorem \ref{thm.Haction}, we classify up to stable orbit equivalence, the linear lattice actions $\Gamma \actson \R^n$, as well as the natural lattice actions on flag manifolds. At the same time, we compute the outer automorphism group of the associated orbit equivalence relations.

We start with the following elementary lemma.

\begin{lemma}\label{lemma.notinduced}
Let $\Gamma \actson (X,\mu)$ be a non-singular action of the countable group $\Gamma$ and assume that the diagonal action $\Gamma \actson X \times X$ is ergodic. Then, $\Gamma \actson X$ is not induced, i.e.\ if $\Gamma \actson X$ is induced from $\Gamma_1 \actson X_1$, then $\Gamma_1 = \Gamma$ and $\mu(X \setminus X_1) = 0$.
\end{lemma}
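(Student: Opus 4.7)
My plan is to argue by contradiction and extract the desired conclusions from the ergodicity of the diagonal action $\Gamma \actson X \times X$. Suppose $\Gamma \actson X$ is induced from $\Gamma_1 \actson X_1$, so that $X_1 \subset X$ is non-negligible and $\Gamma_1$-invariant (implicit in the notation $\Gamma_1 \actson X_1$), and $\mu(g \cdot X_1 \cap X_1) = 0$ for all $g \in \Gamma - \Gamma_1$. The key object will be
$$S \; := \; \bigcup_{g \in \Gamma} (g X_1 \times g X_1) \; \subset \; X \times X,$$
which is diagonally $\Gamma$-invariant and non-negligible (since it contains $X_1 \times X_1$), hence conull in $X \times X$ by the ergodicity hypothesis.

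To establish $\Gamma_1 = \Gamma$, I would assume for contradiction that some $g_0 \in \Gamma - \Gamma_1$ exists, and show that the positive-measure rectangle $X_1 \times (g_0 \cdot X_1)$ is essentially disjoint from $S$. For every $g \in \Gamma$, one has
$$(X_1 \times g_0 X_1) \cap (g X_1 \times g X_1) \; = \; (X_1 \cap g X_1) \times (g_0 X_1 \cap g X_1).$$
If $g \notin \Gamma_1$, the first factor is null by the disjointness hypothesis. If instead $g \in \Gamma_1$, then $g_0^{-1} g \notin \Gamma_1$ (using that $\Gamma_1$ is a subgroup and $g_0 \notin \Gamma_1$), so the hypothesis together with non-singularity of the action gives $\mu(g_0 X_1 \cap g X_1) = \mu\bigl(g_0 \cdot (X_1 \cap g_0^{-1} g X_1)\bigr) = 0$. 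In either case the intersection is null, and taking the countable union over $g \in \Gamma$ yields $(X_1 \times g_0 X_1) \cap S$ null, contradicting that $S$ is conull.

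With $\Gamma_1 = \Gamma$ in hand, $X_1$ is now a non-negligible $\Gamma$-invariant subset of $X$. Since ergodicity of $\Gamma \actson X \times X$ trivially implies ergodicity of $\Gamma \actson X$ (any $\Gamma$-invariant measurable subset of $X$ pulls back via the first projection to a $\Gamma$-invariant subset of $X \times X$), $X_1$ must be either null or conull; non-negligibility forces $\mu(X \setminus X_1) = 0$. The only mildly delicate step is the case analysis in the second paragraph, which uses the subgroup property of $\Gamma_1$ to rule out both $g \in \Gamma_1$ and $g \notin \Gamma_1$ simultaneously contributing to $(X_1 \times g_0 X_1) \cap S$; everything else is a direct application of diagonal ergodicity.
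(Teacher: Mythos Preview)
Your proof is correct and is essentially the same argument as the paper's. The paper packages the idea via a $\Gamma$-equivariant map $\pi : X \to \Gamma/\Gamma_1$ with $X_1 = \pi^{-1}(e\Gamma_1)$, so that your set $S$ is exactly $\{(x,y) : \pi(x) = \pi(y)\}$; conullity of this set forces $\pi$ to be essentially constant, which yields both $\Gamma_1 = \Gamma$ and $\mu(X \setminus X_1)=0$ in one stroke, whereas you unwind the same observation by hand without introducing $\pi$.
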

\begin{proof}
Assume that $\Gamma \actson X$ is induced from $\Gamma_1 \actson X_1$. So, we find a quotient map $\pi : X \recht \Gamma/\Gamma_1$ satisfying $\pi(g \cdot x) = g \pi(x)$ almost everywhere and $X_1 = \pi^{-1}(e \Gamma_1)$. Hence, the subset $\{(x,y) \in X \times X \mid \pi(x) = \pi(y)\}$ is non-negligible and $\Gamma$-invariant. By the ergodicity of $\Gamma \actson X \times X$, it follows that $\pi(x) = \pi(y)$ for almost all $(x,y) \in X \times X$. This means that $\Gamma_1 = \Gamma$ and $\mu(X \setminus X_1) = 0$.
\end{proof}

If $\cR$ is a II$_1$ equivalence relation on $(X,\mu)$, we denote by $[\cR]$ the \emph{full group} of $\cR$ consisting of non-singular automorphisms $\Delta : X \recht X$ satisfying $(x,\Delta(x)) \in \cR$ for almost every $x \in X$. Then, $[\cR]$ is a normal subgroup of the automorphism group $\Aut(\cR)$ of $\cR$. The quotient group is denoted by $\Out(\cR)$ and called the \emph{outer automorphism group} of $\cR$. The \emph{full pseudogroup} of $\cR$ is denoted by $[[\cR]]$ and consists of non-singular partial isomorphisms $\phi : X_0 \subset X \recht X_1 \subset X$ satisfying $(x,\phi(x)) \in \cR$ for almost every $x \in X_0$. We denote $X_0 = D(\phi)$ and $X_1 = R(\phi)$.

\begin{theorem}\label{thm.classif-linear}
Let $n \geq 5$ and $\Gamma \subset \SL(n,\R)$ a lattice. Let $n' \geq 2$ and $\Gamma' \subset \SL(n',\R)$ a lattice. If the non-singular isomorphism $\Delta : X_1 \subset \R^n \recht X_1' \subset \R^{n'}$ is a SOE between $\Gamma \actson \R^n$ and $\Gamma' \actson \R^{n'}$, then
\begin{itemize}
\item $n=n'$ and there exists $A \in \GL(n,\R)$ such that $\Gamma' = A \Gamma A^{-1}$,
\item there exists $\phi \in [[\cR(\Gamma' \actson \R^{n'})]]$ with $R(\phi) = X_1'$ such that $\Delta(x) = \phi(A(x))$ for almost every $x \in X_1$.
\end{itemize}
In particular, $\Out(\cR(\Gamma \actson \R^n)) = \cN_{\GL(n,\R)}(\Gamma)/\Gamma$.
\end{theorem}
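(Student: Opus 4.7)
The plan is to deduce both statements from the orbit-equivalence superrigidity of Theorem~\ref{thm.OEsuperrigid-1}(\ref{item.1}) combined with the classical lattice rigidity for linear actions on $\R^n$ from \cite{F2}.

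Applying Theorem~\ref{thm.OEsuperrigid-1}(\ref{item.1}) with $\Lambda \actson Y := \Gamma' \actson \R^{n'}$, the action $\Gamma' \actson \R^{n'}$ is conjugate to an induction of either $\Gamma \actson \R^n$ or (when $-1 \in \Gamma$) of the quotient $\Gamma/\{\pm 1\} \actson \R^n/\{\pm 1\}$. Any such induction embeds $\Gamma$ (or $\Gamma/\{\pm 1\}$) as a subgroup of $\Gamma'$, and since $\Gamma$ has property~(T) whereas lattices in $\SL(2,\R)$ contain no infinite property~(T) subgroups, this rules out $n' = 2$. For $n' \geq 3$, Lemma~\ref{lemma.ergodic-T} yields ergodicity of the diagonal $\Gamma' \actson \R^{n'} \times \R^{n'}$, and Lemma~\ref{lemma.notinduced} makes the induction trivial. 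So $\Gamma' \actson \R^{n'}$ is directly conjugate to one of the two candidate actions.

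I would next rule out the quotient case: any conjugacy $\R^{n'} \cong \R^n/\{\pm 1\}$ lifts through the double cover to a measurable intertwiner $\R^n \recht \R^{n'}$, and the rigidity theorem of \cite{F2} realises this lift by a linear map whose image carries the $\{\pm 1\}$-symmetry, forcing $-1 \in \Gamma'$ and collapsing the quotient case into the linear one. So there is an honest non-singular conjugacy $\alpha : \R^n \recht \R^{n'}$ intertwining $\Gamma \actson \R^n$ with $\Gamma' \actson \R^{n'}$ via some group isomorphism $\delta : \Gamma \recht \Gamma'$; Lemma~\ref{lemma.OE} together with the remark after Theorem~\ref{thm.Haction} moreover gives $\Delta(x) \in \Gamma' \cdot \alpha(x)$ for almost every $x \in X_1$. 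The main rigidity result of \cite{F2} now upgrades $\alpha$: $n = n'$ and there exists $A \in \GL(n,\R)$ with $A \Gamma A^{-1} = \Gamma'$, $\delta(g) = A g A^{-1}$, and $\alpha(x) = A x$ a.e. Defining $\phi(Ax) := \Delta(x)$ for $x \in X_1$ yields the required $\phi \in [[\cR(\Gamma' \actson \R^{n'})]]$ with range $X_1'$ and $\Delta(x) = \phi(Ax)$.

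For the outer automorphism group, specialise to $\Gamma = \Gamma'$: any $\Theta \in \Aut(\cR(\Gamma \actson \R^n))$ is a SOE of $\Gamma \actson \R^n$ with itself, so by the first part $\Theta$ factors as $\phi \circ A$ for some $A \in \cN_{\GL(n,\R)}(\Gamma)$ and $\phi \in [\cR]$; conversely each such $A$ gives an element of $\Aut(\cR)$. Two normalizer elements $A, A'$ represent the same outer class iff $A(A')^{-1}$ acts as an element of $[\cR]$, and essential freeness and ergodicity of $\Gamma \actson \R^n$ then force $A(A')^{-1} \in \Gamma$, giving $\Out(\cR) = \cN_{\GL(n,\R)}(\Gamma)/\Gamma$. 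The chief obstacle is the external input from \cite{F2}: upgrading a measurable conjugacy of linear lattice actions on $\R^n$ to an actual $\GL(n,\R)$-conjugacy. The remaining steps are direct bookkeeping with the tools already developed in the paper.
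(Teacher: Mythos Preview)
Your overall strategy matches the paper's: apply Theorem~\ref{thm.OEsuperrigid-1}, eliminate the induction via Lemma~\ref{lemma.notinduced}, rule out the quotient alternative, then invoke \cite{F2} to linearize the conjugacy. Two steps need repair.

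\textbf{The quotient case.} Your argument here does not work as written. The composite $\R^n \to \R^n/\{\pm 1\} \xrightarrow{\cong} \R^{n'}$ is a $2$-to-$1$ measurable map, not an isomorphism, and \cite[Theorem~D]{F2} is a statement about measurable \emph{isomorphisms} between homogeneous $\Gamma$-spaces; it says nothing about finite-to-one intertwiners. (And a linear $A$ satisfying $Ax = A(-x)$ would be zero, so your conclusion ``$-1 \in \Gamma'$'' cannot arise this way.) The paper disposes of this case in one line via Mostow rigidity: a conjugacy would yield a group isomorphism $\Gamma' \cong \Gamma/\{\pm 1\}$, but $\Gamma'$ is a lattice in $\SL(n',\R)$ while $\Gamma/\{\pm 1\}$ is a lattice in $\PSL(n,\R)$ (with $n$ even, since $-1 \in \Gamma$), and no isomorphism of these ambient groups exists.

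\textbf{The transpose automorphism.} When you write that \cite{F2} directly gives $\delta(g) = A g A^{-1}$ and $\alpha(x) = Ax$, you suppress the second alternative $\delta(g) = A (g\trans)^{-1} A^{-1}$ coming from the outer automorphism of $\SL(n,\R)$. The paper makes this explicit: Mostow rigidity reduces $\delta$ to one of these two forms, and then \cite[Theorem~D]{F2} is invoked separately. In the transpose case one would need a measurable conjugacy between $\Gamma \actson \SL(n,\R)/H$ and $\Gamma \actson \SL(n,\R)/H\trans$ (with $H$ the stabilizer of $e_1$), which \cite[Theorem~D]{F2} rules out since $H$ and $H\trans$ are not conjugate in $\SL(n,\R)$. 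This is a genuine step, not bookkeeping.

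Your treatment of $n'=2$ via property~(T) subgroups is correct but more elaborate than needed: the paper simply notes that $\cR(\Gamma' \actson \R^2)$ is hyperfinite (the action being amenable), hence cannot be SOE to the property~(T) relation $\cR(\Gamma \actson \R^n)$.
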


\begin{proof}
Let $\Delta : X_1 \subset \R^n \recht X_1' \subset \R^{n'}$ be a SOE between $\Gamma \actson \R^n$ and $\Gamma' \actson \R^{n'}$. Since for $n'=2$, the equivalence relation $\cR(\Gamma' \actson \R^{n'})$ is hyperfinite, we have $n' \geq 3$ and hence the diagonal action $\Gamma' \actson \R^{n'} \times \R^{n'}$ is ergodic. By Lemma \ref{lemma.notinduced}, $\Gamma' \actson \R^{n'}$ is not an induced action.
By Theorem \ref{thm.OEsuperrigid-1}, the action $\Gamma' \actson \R^{n'}$ is conjugate with either $\Gamma \actson \R^n$ or, in case $n$ is even, $\Gamma / \{\pm 1\} \actson \R^n / \{\pm 1\}$. By Mostow rigidity, the latter is impossible since $\Gamma' \not\cong \Gamma/\{\pm 1\}$.
In the former case, we already conclude $n=n'$ and we have found a non-singular isomorphism $\Theta : \R^n \recht \R^n$ and a group isomorphism $\delta : \Gamma \recht \Gamma'$ satisfying
\begin{itemize}
\item $\Theta(g \cdot x) = \delta(g) \cdot \Theta(x)$,
\item $\Delta(x) \in \Gamma' \cdot \Theta(x)$,
\end{itemize}
for all $g \in \Gamma$ and almost all $x \in \R^n$. Denoting by $B\trans$ the transpose of the matrix $B$, by Mostow rigidity, we find $A \in \GL(n,\R)$ such that a) $\delta(g) = AgA^{-1}$ for all $g \in \Gamma$, or b) $\delta(g) = A (g\trans)^{-1} A^{-1}$ for all $g \in \Gamma$.

Define the subgroup $H \subset \SL(n,\R)$ consisting of matrices $g$ with $g e_1 = e_1$ and identify $\R^n = \SL(n,\R)/H$. In case b), we would get a conjugacy between the $\Gamma$-actions on $\SL(n,\R)/H$ and $\SL(n,\R)/H\trans$, which is ruled out by \cite[Theorem D]{F2}. In case a), multiplying $A$ by a non-zero scalar if necessary, \cite[Theorem D]{F2} implies that $\Theta(x) = A x$ for almost all $x \in \R^n$.

Defining the partial isomorphism $\phi := \Delta \circ A^{-1}$ with $R(\phi) = X_1'$, it follows that $\phi \in [[\cR(\Gamma' \actson \R^{n'})]]$ and that $\Delta(x) = \phi(A(x))$ for almost every $x \in X_1$.
\end{proof}

Let $X$ be the real flag manifold with signature $d:=(d_1,\ldots,d_l,n)$. Denote
$$d\trans := (n-d_l,n - d_{l-1},\ldots, n-d_1,n) \; .$$
If $X'$ is the real flag manifold with signature $d\trans$, there is a natural diffeomorphism $X \recht X' : x \mapsto \overline{x}$ satisfying $\overline{g \cdot x} = (g\trans)^{-1} \cdot \overline{x}$ for all $x \in X$, $g \in \SL(n,\R)$.

\begin{theorem}\label{thm.classif-flag}
Let $\Gamma \subset \PSL(n,\R)$ be a lattice and $X$ the real flag manifold with signature $d:=(d_1,\ldots,d_l,n)$. Assume that $n \geq 4d_l+1$. Let $\Gamma' \subset \PSL(n',\R)$ be a lattice and $X'$ the real flag manifold with signature $d':=(d'_1,\ldots,d'_{l'},n')$.

If the non-singular isomorphism $\Delta : X_1 \subset X \recht X_1' \subset X'$ is a SOE between $\Gamma \actson X$ and $\Gamma' \actson X'$, then $n=n'$ and there exists $A \in \PGL(n,\R)$, $\phi \in [[\cR(\Gamma' \actson X')]]$ with $R(\phi) = X_1'$ such that
\begin{itemize}
\item either, $d'=d$, $\Gamma' = A \Gamma A^{-1}$, $\Delta(x) = \phi(A(x))$ for almost every $x \in X_1$,
\item or, $d'=d\trans$, $\Gamma' = A \Gamma\trans A^{-1}$, $\Delta(x) = \phi(A(\overline{x}))$ for almost every $x \in X_1$.
\end{itemize}
In particular, $\Out(\cR(\Gamma \actson X)) = \cN_{\PGL(n,\R)}(\Gamma)/\Gamma$.
\end{theorem}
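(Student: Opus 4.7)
The plan is to adapt the argument of Theorem \ref{thm.classif-linear} to the flag-manifold setting. First I would apply Theorem \ref{thm.OEsuperrigid-2} to the given SOE $\Delta$: this exhibits $\Gamma' \actson X'$ as conjugate to an induction of one of the listed actions $G_\Sigma \actson \Xtil/\Sigma$, where $\Sigma < \Sigma_l$ and $G_\Sigma$ is either $\Gamma \times \Sigma_l/\Sigma$ or $(\Gammatil \times \Sigma_l/\Sigma)/\{\pm(1,1)\}$. To eliminate the induction I would verify that the diagonal action $\Gamma' \actson X' \times X'$ is ergodic: since $X'$ is a homogeneous space $\SL(n',\R)/H'$ with parabolic stabilizer $H'$, this follows from Moore's ergodicity theorem. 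Combined with Lemma \ref{lemma.notinduced}, this produces a genuine conjugacy between $\Gamma' \actson X'$ and some $G_\Sigma \actson \Xtil/\Sigma$, implemented by a non-singular isomorphism $\Theta$ and a group isomorphism $\delta \colon \Gamma' \recht G_\Sigma$, with $\Delta(x) \in \Gamma' \cdot \Theta(x)$ almost everywhere.

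Next, $G_\Sigma$ is a finite extension of $\Gamma$ or of $\Gammatil/\{\pm(1,1)\}$, so $\delta$ restricts to an isomorphism between finite index subgroups of $\Gamma'$ and of $\Gamma$. Mostow--Margulis rigidity applied to these commensurable lattices forces $n=n'$ and provides $A \in \PGL(n,\R)$ such that, modulo the finite kernels, either (a) $\delta(g) = AgA^{-1}$, or (b) $\delta(g) = A(g\trans)^{-1}A^{-1}$. In contrast with the linear case, the transpose alternative here genuinely occurs: the automorphism $g \mapsto (g\trans)^{-1}$ of $\PSL(n,\R)$ carries the flag variety of signature $d$ to the one of signature $d\trans$, producing the second alternative of the theorem.

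I would then invoke \cite[Theorem D]{F2}, which rigidifies any measurable conjugacy between two lattice actions on $\SL(n,\R)$-homogeneous spaces to the algebraic map induced by the conjugating element. Applied to $\Theta$, this forces $\Theta(x) = A \cdot x$ in case (a) and $\Theta(x) = A \cdot \ox$ in case (b), and simultaneously pins down the covering parameter $\Sigma$ so that $\Xtil/\Sigma = X$ (respectively, its dual), giving $d'=d$, $\Gamma' = A\Gamma A^{-1}$ (resp.\ $d'=d\trans$, $\Gamma' = A\Gamma\trans A^{-1}$). The SOE $\Delta$ then differs from $\Theta$ by a partial isomorphism $\phi \in [[\cR(\Gamma' \actson X')]]$ with $R(\phi) = X_1'$, yielding the stated formulas. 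Specializing to $\Gamma' = \Gamma$, $X' = X$, every outer automorphism of $\cR(\Gamma \actson X)$ is represented by some $A \in \cN_{\PGL(n,\R)}(\Gamma)$, and those with $A \in \Gamma$ lie in $[\cR]$; this gives $\Out(\cR(\Gamma \actson X)) = \cN_{\PGL(n,\R)}(\Gamma)/\Gamma$.

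The main obstacle I anticipate is the bookkeeping needed to eliminate the intermediate covers $\Xtil/\Sigma$ with $\Sigma \neq \Sigma_l$ that abstractly appear in Theorem \ref{thm.OEsuperrigid-2}. One has to combine the algebraic description of $\Theta$ from \cite[Theorem D]{F2} with a stabilizer comparison at generic points of $X'$ and $\Xtil/\Sigma$ to force $\Sigma$ to take the maximal value (or its transpose analogue). The transpose alternative runs in parallel throughout and requires symmetric but careful attention, since — unlike in Theorem \ref{thm.classif-linear} — it contributes a second genuine family to the classification rather than being excluded.
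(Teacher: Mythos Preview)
Your overall architecture is right, and two of your steps are even quicker than you suggest. The elimination of the intermediate covers $\Xtil/\Sigma$ needs no stabilizer bookkeeping: since $\Gamma'$ is a lattice in $\PSL(n',\R)$ it has no nontrivial finite normal subgroup, so an isomorphism $\Gamma' \cong \Gamma \times \Sigma_l/\Sigma$ or $\Gamma' \cong (\Gammatil \times \Sigma_l/\Sigma)/\{\pm(1,1)\}$ forces $\Sigma = \Sigma_l$ immediately. This gives you directly a conjugacy $\Theta : X \to X'$ and an isomorphism $\delta : \Gamma \to \Gamma'$, to which Mostow applies cleanly.

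The real gap is your invocation of \cite[Theorem~D]{F2}. That result is formulated for \emph{infinite measure preserving} lattice actions on homogeneous spaces $G/H$, whereas the flag manifold actions $\Gamma \actson X$ and $\Gamma' \actson X'$ have no invariant measure at all (Lemma~\ref{lemma.measures-and-co}). So you cannot apply it to $\Theta$ directly, and this is precisely why the flag case is harder than the linear one. The paper inserts a genuine extra step here: writing $X = \M_{n,k}(\R)/H$ and letting $H_1 \lhd H$ be the unimodular, determinant $\pm 1$ subgroup (so $H/H_1 \cong \R_+^k$), one uses the complete description of $\Ufin$-cocycles for $\Gamma \actson X$ from Example~\ref{ex.cocycles-flag} to analyze the $\R_+^{k'}$-valued cocycle $\omega_{H'_1}(\delta(\cdot),\Theta(\cdot))$. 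This produces a surjection $\R_+^k \to \R_+^{k'}$ and, crucially, a lift of $\Theta$ to a conjugacy $\Theta_1 : \M_{n,k}(\R)/H_1 \to \M_{n',k'}(\R)/H'_1$ between genuinely infinite-measure-preserving actions. Only then does \cite[Theorem~D]{F2} apply, yielding $d'=d$ (resp.\ $d'=d\trans$) and $\Theta_1(x) = AxB$, which pushes down to $\Theta(x)=A(x)$ (resp.\ $A(\overline{x})$). Without this lifting argument your proof does not go through.
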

\begin{proof}
Let $\Delta : X_1 \subset X \recht X_1' \subset X'$ be a SOE between $\Gamma \actson X$ and $\Gamma' \actson X'$.

We first prove that the diagonal action $\Gamma' \actson X' \times X'$ is ergodic. If $Y$ denotes the flag manifold of signature $(1,2,\ldots,n')$, the action $\Gamma' \actson X'$ is a quotient of the action $\Gamma \actson Y$. So, it suffices to prove ergodicity of $\Gamma' \actson Y \times Y$. Denoting by $D \subset \SL(n',\R)$ the subgroup of diagonal matrices, $\Gamma' \actson Y \times Y$ can be identified with $\Gamma' \actson \SL(n',\R) / D$, which follows ergodic because $D \actson \SL(n',\R)/\Gamma'$ is ergodic by Moore's ergodicity theorem. So, by Lemma \ref{lemma.notinduced}, $\Gamma' \actson X'$ is not an induced action.

Since $\Gamma'$ has trivial center, Theorem \ref{thm.OEsuperrigid-2} yields a group isomorphism $\delta : \Gamma \recht \Gamma'$ and a non-singular isomorphism $\Theta : X \recht X'$ satisfying $\Theta(g \cdot x) = \delta(g) \cdot \Theta(x)$ and $\Delta(x) \in \Gamma' \cdot \Theta(x)$ almost everywhere. By Mostow rigidity, $n = n'$ and there exists $A \in \PGL(n,\R)$ such that either $\delta(g) = A g A^{-1}$ or $\delta(g) = A (g\trans)^{-1} A^{-1}$ for all $g \in \Gamma$.

Set $k_1 = d_1$ and $k_i = d_i - d_{i-1}$ for $2 \leq i \leq l$. Put $k=d_l$ and define the closed subgroups $H$ and $H_1$ of $\GL(k,\R)$ by
\begin{align*}
H & := \left\{ \begin{pmatrix} A_{11} & * & \cdots & * \\ 0 & A_{22} & \cdots & * \\
\vdots & \vdots & \ddots & \vdots \\
0 & 0 & \cdots & A_{ll} \end{pmatrix} \; \Big| \; A_{ii} \in \GL(k_i,\R) \;\;\text{and}\;\; \det(A_{ii}) \neq 0 \right\} \quad , \vspace{0.5ex}\\
H_1 & := \left\{ \begin{pmatrix} A_{11} & * & \cdots & * \\ 0 & A_{22} & \cdots & * \\
\vdots & \vdots & \ddots & \vdots \\
0 & 0 & \cdots & A_{ll} \end{pmatrix} \; \Big| \; A_{ii} \in \GL(k_i,\R) \;\;\text{and}\;\; \det(A_{ii}) = \pm 1 \right\} \; .
\end{align*}
We identify $H/H_1 = \R_+^k$. From now on, we write $X$ as $\M_{n,k}(\R)/H$. Define analogously the subgroups $H'$, $H_1'$ of $\GL(k',\R)$ and write $X'$ as $\M_{n,k}(\R)/H'$

Whenever $N \lhd H$ is a closed normal subgroup containing $-1$, the quotient morphism $H \recht H/N$ gives rise, as in Example \ref{ex.cocycles-flag}, to a $1$-cocycle $\om_{N}$ for the action $\Gamma \actson X$ with values in $H/N$ such that the action $\Gamma \actson \M_{n,k}(\R) / N$ can be identified with the action $\Gamma \actson X \times H/N$ given by
$$g \cdot (x,h) = (g \cdot x, \om_N(g,x)h) \; .$$
Since $\Gamma \actson \M_{n,k}(\R) /N$ is ergodic, the $1$-cocycle $\om_N$ cannot be cohomologous to a $1$-cocycle taking values in a proper closed subgroup of $H/N$. We similarly define the $1$-cocycles $\om_{N'}$ for $\Gamma' \actson X'$.

Since $\delta,\Theta$ conjugate the actions $\Gamma \actson X$ and $\Gamma' \actson X'$, the map $\mu(g,x) = \om_{H_1'}(\delta(g),\Theta(x))$ defines a $1$-cocycle for $\Gamma \actson X$, with values in $\R_+^{k'}$ and with the property of not being cohomologous to a $1$-cocycle taking values in a proper closed subgroup of $\R_+^{k'}$. We now apply Example \ref{ex.cocycles-flag}, describing all $\Ufin$-valued $1$-cocycles for $\Gamma \actson X$, and note that $\R_+^{k'}$ belongs to $\Ufin$. Every group morphism $\Gamma \recht \R_+^{k'}$ is trivial and every continuous group morphism $H \recht \R_+^{k'}$ is trivial on $H_1$. So, we find a continuous group morphism $\rho : \R_+^k \recht \R_+^{k'}$ such that $\mu$ is cohomologous to $\rho \circ \om_{H_1}$. Since $\mu$ cannot be cohomologous to a $1$-cocycle taking values in a proper closed subgroup of $\R_+^{k'}$, it follows that $\rho$ is onto.

Altogether, we find a closed normal subgroup $N \lhd H$, containing $H_1$, and a continuous isomorphism $\rho : H/N \recht H'/H_1'$ such that $\mu$ is cohomologous to $\rho \circ \om_N$. It follows that there exists a non-singular isomorphism $\Theta_1 : \M_{n,k}(\R)/N \recht \M_{n',k'}(\R)/H_1'$ satisfying $\Theta_1(g \cdot x) = \delta(g) \cdot \Theta_1(x)$ and $\Theta_1(x)H' = \Theta(xH)$ almost everywhere.

Since the action of $\Gamma'$ on $\M_{n',k'}(\R) / H_1'$ is infinite measure preserving, Lemma \ref{lemma.measures-and-co} implies that $N = H_1$. We saw already that either $\delta(g) = A g A^{-1}$ or $\delta(g) = A (g\trans)^{-1} A^{-1}$ for all $g \in \Gamma$. In the former case, \cite[Theorem D]{F2} implies that $d'=d$ and that there exists $B \in H$ such that $\Theta_1(x) = A x B$ for almost every $x \in \M_{n,k}(\R)/H_1$. It follows that $\Theta(x) = A(x)$ for almost every $x \in X$. In the latter case, we prove analogously that $d'=d\trans$ and $\Theta(x) = A(\overline{x})$.
\end{proof}

\section{Implementation by group actions}

In \cite[page 292]{FM}, the question is raised whether every II$_1$ equivalence relation can be implemented by an essentially free action of a countable group. This question has been settled in the negative in \cite[Theorem D]{F1}. In Proposition \ref{prop.notimplement} below, we give examples of II$_1$ equivalence relations $\cR$ on $(X,\mu)$ with the following much stronger property: whenever $\Lambda \actson (Y,\eta)$ is an essentially free, non-singular action and $\Delta : X \recht Y$ is a measurable map satisfying $\Delta(x) \in \Lambda \cdot \Delta(y)$ for almost all $(x,y) \in \cR$, then there exists $y_0 \in Y$ such that $\Delta(x) \in \Lambda \cdot y_0$ for almost all $x \in X$.

Among other examples, \cite[Theorem D]{F1} proves that the restriction of the orbit equivalence relation $\SL(n,\Z) \actson \R^n / \Z^n$ to a subset of irrational measure, provides a II$_1$ equivalence relation that cannot be implemented by a free action of a group. By \cite[Theorem 0.3]{Pcocycle}, if $\Gamma \acts [0,1]^\Gamma$ is the Bernoulli action of a property (T) group $\Gamma$ without finite normal subgroups, the restriction of its orbit equivalence relation to any subset of measure strictly between $0$ and $1$ is unimplementable by a free action.

But, \cite[Theorem D]{F1} also provides examples of II$_1$ equivalence relations $\cR$ such that none of the amplifications $\cR^t$, $t > 0$, can be implemented by a free action. These equivalence relations are constructed using the following method. Suppose that $G$ is a l.c.s.c.\ unimodular group and $G \actson (X,\mu)$ an essentially free, properly ergodic, p.m.p.\ action. There exists a Borel set $Y \subset X$, a probability measure $\eta$ on $Y$ and a neighborhood $\cU$ of $e$ in $G$ that we equip with a multiple of the Haar measure, such that $\cU \times Y \recht X : (g,y) \mapsto g \cdot y$ provides a measure preserving isomorphism of $\cU \times Y$ onto a non-negligible subset of $X$. The restriction of the orbit equivalence relation of $G \actson (X,\mu)$ to $Y$ is a II$_1$ equivalence relation on $(Y,\eta)$. One calls $Y \subset X$ a \emph{measurable cross-section} for $G \actson (X,\mu)$. A different choice of measurable cross-section yields a stably isomorphic II$_1$ equivalence relation.

By Theorem \ref{thm.equiv-T}, the restriction of the orbit equivalence relation of $\SL(n,\Z) \actson \R^n$ to a subset of finite measure, provides other examples of II$_1$ equivalence relations $\cR$ such that none of the finite amplifications can be implemented by a free action. In fact, one can show that $\cR$ arises as the measurable cross-section for the action of $\SL(n-1,\R) \ltimes \R^{n-1}$ on $\SL(n,\R)/\SL(n,\Z)$. Nevertheless, this example is not covered by \cite[Theorem D]{F1}, since $\SL(n-1,\R) \ltimes \R^{n-1}$ is not semi-simple.

\begin{proposition}\label{prop.notimplement}
Let $G$ be a l.c.s.c.\ connected, unimodular group with normal closed subgroup $G_0$ having the relative property (T). Let $H_\R$ be a real Hilbert space and $\pi : G \recht \rO(H_\R)$ an orthogonal representation. Assume that $\pi$ is injective and that the restriction of $\pi$ to $G_0$ is weakly mixing (i.e.\ has no finite-dimensional invariant subspaces). Denote by $G \actson (X,\mu)$ the associated Gaussian action (see e.g.\ \cite[Section 2.7]{furman-on-popa}). Choose a measurable cross-section $X_1 \subset X$ and denote by $\cR$ the associated II$_1$ equivalence relation on $X_1$.

Whenever $\Lambda \actson (Y,\eta)$ is an essentially free, non-singular action and $\Delta : X_1 \recht Y$ is a measurable map satisfying $\Delta(x) \in \Lambda \cdot \Delta(y)$ for almost all $(x,y) \in \cR$, then there exists $y_0 \in Y$ such that $\Delta(x) \in \Lambda \cdot y_0$ for almost all $x \in X_1$.
\end{proposition}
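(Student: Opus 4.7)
\textbf{Proof plan for Proposition \ref{prop.notimplement}.}

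The strategy is to transport $\Delta$ to a Borel map on all of $X$, extract a Zimmer $1$-cocycle for the Gaussian action $G \actson X$ with values in the countable discrete group $\Lambda$, apply a cocycle superrigidity theorem of the type of \cite[Theorem 0.1]{Pcocycle} (in its l.c.s.c.\ form, as developed in \cite{furman-on-popa}), and exploit connectedness of $G$ to collapse the limiting morphism.

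First I would upgrade $\Delta$ to a Borel map $\tilde\Delta : X \recht Y$. Since $X_1$ is a measurable cross-section for the essentially free action $G \actson (X,\mu)$, I can cover $X$ (up to null sets) by countably many tubes $g_i \cdot (\cU \times X_1)$ and thereby produce a Borel retraction $r : X \recht X_1$ with $r(x) \in G \cdot x$ almost everywhere. Set $\tilde\Delta(x) := \Delta(r(x))$. By hypothesis, $\cR$ is the restriction of $\cR(G \actson X)$ to $X_1$, so for every $g \in G$ one has $r(x) \mathrel{\cR} r(g\cdot x)$ for almost every $x$, whence $\tilde\Delta(g \cdot x) \in \Lambda \cdot \tilde\Delta(x)$ for a.e.\ $x \in X$. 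Essential freeness of $\Lambda \actson Y$ then uniquely determines a Borel $1$-cocycle $\om : G \times X \recht \Lambda$ by $\tilde\Delta(g \cdot x) = \om(g,x) \cdot \tilde\Delta(x)$.

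Next I apply cocycle superrigidity. The Gaussian action $G \actson (X,\mu)$ is $s$-malleable via the standard $\rO(2)$-rotation flow on $H_\R \oplus H_\R$ (see \cite[\S 2.7]{furman-on-popa}); it preserves the probability measure $\mu$; and, by assumption, $G_0 \lhd G$ has the relative property (T) while $G_0 \actson X$ is weakly mixing (equivalently, $\pi|_{G_0}$ has no finite-dimensional invariant subspaces). Hence the l.c.s.c.\ generalisation of \cite[Theorem 0.1]{Pcocycle}, as in \cite{furman-on-popa}, applies to the $\Ufin$-valued cocycle $\om$ (the countable discrete group $\Lambda$ is trivially Polish of finite type). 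It yields a measurable $\vphi : X \recht \Lambda$ and a continuous group morphism $\delta : G \recht \Lambda$ with
\[
\om(g,x) = \vphi(g \cdot x)^{-1} \, \delta(g) \, \vphi(x) \quad \text{almost everywhere.}
\]

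Because $G$ is connected and $\Lambda$ is discrete, $\delta$ is the trivial morphism. Setting $\Phi(x) := \vphi(x)^{-1} \cdot \tilde\Delta(x)$, one then has $\Phi(g \cdot x) = \Phi(x)$ for every $g \in G$ and almost every $x \in X$. Weak mixing of $G_0 \actson X$ forces $G_0$-ergodicity and a fortiori $G$-ergodicity of $X$, so $\Phi$ is essentially constant, equal to some $y_0 \in Y$. Therefore $\tilde\Delta(x) \in \Lambda \cdot y_0$ for a.e.\ $x \in X$, and restricting to $X_1$ (where $\tilde\Delta = \Delta$ by construction, after removing a null set) gives the desired conclusion.

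The main obstacle is Step 3: one needs a version of Popa's cocycle superrigidity theorem valid for l.c.s.c.\ (not just countable discrete) groups and for a closed normal subgroup $G_0 \lhd G$ with the relative property (T) acting weakly mixingly, rather than for $G$ itself with property (T). This is precisely the setting treated in \cite{furman-on-popa}, and the rest of the argument — the cross-section bookkeeping, the connectedness/discreteness dichotomy, and the ergodic collapse — is routine.
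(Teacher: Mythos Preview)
Your proof is correct and follows essentially the same route as the paper: lift $\Delta$ along a measurable retraction $r:X\to X_1$ (the paper's map $p$), extract the Zimmer cocycle $\om:G\times X\to\Lambda$, invoke the l.c.s.c.\ form of Popa's cocycle superrigidity from \cite{furman-on-popa}, use connectedness of $G$ to kill the resulting morphism $\delta$, and conclude by ergodicity. The paper's proof is terser but the ideas, the key citation, and the order of steps are identical.
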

\begin{proof}
Choose a measurable map $p : X \recht X_1$ such that $p(x) \in G \cdot x$ for almost all $x \in X$. Define the $1$-cocycle $\om : G \times X \recht \Lambda$ such that $\Delta(p(g \cdot x)) = \om(g,x) \cdot \Delta(p(x))$ almost everywhere. As observed in \cite{furman-on-popa}, Theorem 0.1 in \cite{Pcocycle} applies to $G \actson (X,\mu)$. Since $G$ is connected, every group morphism from $G$ to $\Lambda$ is trivial and we find a measurable map $\vphi : X \recht \Lambda$ such that $\om(g,x) = \vphi(g \cdot x)^{-1} \vphi(x)$. So, the map $x \mapsto \vphi(x) \cdot \Delta(p(x))$ is $G$-invariant and hence, essentially constant. We therefore find $y_0 \in Y$ such that $\Delta(p(x)) \in \Lambda \cdot y_0$ for almost all $x \in X$. This concludes the proof of the proposition.
\end{proof}

\end{document}